\newcommand\pubpri[2]{%
\ifthenelse{\equal{\version}{public}}%
{{#1}}%
{\marginpar{\scshape\small Pubpri Alert}{#2}}}
\newcommand\pubprinoalert[2]{%
\ifthenelse{\equal{\version}{public}}%
{{#1}}%
{#2}}
\newcommand\ignore[1]{}
\providecommand\wantcolor{yes}   %  
\definecolor{backgroundyellow}{cmyk}{.2,.1,.8,.2}
\definecolor{backgroundblue}{rgb}{0,0,1}
\definecolor{backgroundred}{rgb}{1,0,0}
\definecolor{backgroundmagenta}{cmyk}{0,1,0,0}
\newcommand\mysubsubsection[1]{%
		\subsubsection{\sffamily\upshape\mdseries #1}}
\newcommand\mysss{\mysubsubsection}
\newtheorem{annotation}{Annotation}[section]
\newtheorem{theorem}[annotation]{%\color{blue}
		Theorem}
\newtheorem{lemma}[annotation]{%\color{blue}
		Lemma}
\newtheorem{definition}[annotation]{%\color{magenta}
		Definition}
\newtheorem{corollary}[annotation]{%\color{blue}
		Corollary}
\newtheorem{proposition}[annotation]{%\color{cyan}
		Proposition}
\newtheorem{example}[annotation]{%\color{yellow}
		Example}
\newcommand\bexample{\begin{example}\begin{rm}}
\newcommand\eexample{\end{rm}\hfill$\Box$\end{example}}
\newtheorem{examplenobox}[annotation]{%\color{yellow}
		Example}
\newcommand\bexamplenobox{\begin{examplenobox}\begin{rm}}
\newcommand\eexamplenobox{\end{rm}\end{examplenobox}}
\newtheorem{exercise}[annotation]{%\color{yellow}
		Exercise}
\newcommand\bexercise{\begin{exercise}\begin{rm}}
\newcommand\eexercise{\end{rm}\end{exercise}}
\newtheorem{notation}[annotation]{%\color{yellow}
		Notation}
\newcommand\bnotation{\begin{notation}\begin{rm}}
\newcommand\enotation{\end{rm}\end{notation}}
\newtheorem{remark}[annotation]{%\color{yellow}
		Remark}
\newcommand\bremark{\begin{remark}%\begin{mdseries}\begin{sffamily}
\begin{upshape}}
\newcommand\eremark{\end{upshape}%\end{sffamily}\end{mdseries}
\end{remark}}
\newcommand\bdefn{\begin{definition}%\begin{mdseries}\begin{sffamily}
\begin{upshape}}
\newcommand\edefn{\end{upshape}%\end{sffamily}\end{mdseries}
\end{definition}}
\newtheorem{caveat}[annotation]{%\color{yellow}
		Caveat}
\newcommand\bcaveat{\begin{caveat}%\begin{mdseries}\begin{sffamily}
\begin{upshape}}
\newcommand\ecaveat{\end{upshape}%\end{sffamily}\end{mdseries}
\end{caveat}}
\newenvironment{caveatstar}{%\color{yellow}
\par\noindent{\scshape\bfseries
  Caveat: }\begin{rm}}{\end{rm}\newline} 
\newcommand\bcaveatstar{\begin{caveatstar}}%\begin{mdseries}\begin{sffamily}
\newcommand\ecaveatstar{\end{caveatstar}}
\newenvironment{myproof}{%
\par\noindent{\scshape
  Proof: }\begin{rm}}{\hfill$\Box$\end{rm}\newline} 
\newcommand\bmyproof{\begin{myproof}}
\newcommand\emyproof{\end{myproof}}
\newenvironment{myproofnobox}{%
\par\noindent{\scshape Proof:}\begin{rm}}{\end{rm}\hfill\newline}
\newcommand\bmyproofnobox{\begin{myproofnobox}}
\newcommand\emyproofnobox{\end{myproofnobox}}
\newenvironment{solution}{%
\par\noindent{\scshape Solution: }\begin{rm}}{\hfill$\Box$\end{rm}\newline}
\newenvironment{solutionnobox}{%
%\par\noindent{\scshape Solution:}\begin{rm}}{\end{rm}\hfill\newline}
\par\noindent{\scshape Solution: }\begin{rm}}{\end{rm}}
\newcommand\bsolution{\begin{solution}\begin{rm}}
\newcommand\esolution{\end{rm}\end{solution}}
\newcommand\bsolutionnobox{\begin{solutionnobox}\begin{rm}}
\newcommand\esolutionnobox{\end{rm}\end{solutionnobox}}
\newcommand\bthm{\begin{theorem}}
\newcommand\ethm{\end{theorem}}
\newcommand\bcor{\begin{corollary}}
\newcommand\ecor{\end{corollary}}
\newcommand\blemma{\begin{lemma}}
\newcommand\elemma{\end{lemma}}
\newcommand\bprop{\begin{proposition}}
\newcommand\eprop{\end{proposition}}
\newcommand\beqn{\begin{equation}}
\newcommand\eeqn{\end{equation}}
\newcommand\beqnstar{\begin{equation*}}
\newcommand\eeqnstar{\end{equation*}}
\newcommand\mtitle[1]%
\providecommand\finalized{no}
\newcommand\checked[1]{}}%
\newcommand\checked[1]{\marginpar{[{\ttfamily\upshape\tiny CHECKED: #1}]}}}
\newcommand\spellchecked[1]{}}%
\newcommand\spellchecked[1]{\marginpar{[{\ttfamily\upshape\tiny SPELLCHECKED: #1}]}}}
\providecommand\version{public}   %  setting default  
\newcommand\mcomment[1]{}}%
\newcommand\mcomment[1]{\marginpar{{\raggedright\sffamily\upshape\small
\begin{spacing}{0.75} #1\end{spacing}}}}}
\newcommand\fcomment[1]{}}%
\newcommand\fcomment[1]{\footnote{#1}}}
\newcommand\comment[1]{}}%
\newcommand\comment[1]{{\small #1}}}
\newcommand{\be}{\begin{enumerate}}
\newcommand{\ee}{\end{enumerate}}
\newcommand{\beq}{\begin{equation}}
\newcommand{\eeq}{\end{equation}}
\newcommand{\beqs}{\begin{equation*}}                     
\newcommand{\eeqs}{\end{equation*}}
\newcommand{\complex}{\mathbb{C}}
\newcommand{\nat}{\mathbb{N}}
\newcommand{\integers}{\mathbb{Z}}
\newcommand{\bin}[2]{\left[{#1}\atop{#2}\right]}
\newcommand\pgr{{\rm gr}}
\newcommand{\gr}{\mathpzc{gr}}
\newcommand{\cal}{\mathcal}
\newcommand{\bN}{\mathbb{N}}
\newcommand{\bn}{\mathbb{N}}
\newcommand{\bxu}{{\underline{\bx}}}
\newcommand{\bxur}{\bxu_r}
\newcommand{\bxurm}{\bxu_{r-1}}
\newcommand{\ul}{{\underline{\ell}}}
\newcommand{\ud}{{\underline{d}}}
\newcommand{\um}{{\underline{m}}}
\newcommand{\un}{{\underline{n}}}
\newcommand{\uk}{{\underline{k}}}
\newcommand{\ue}{{\underline{e}}}
\newcommand{\up}{{\underline{\mathbf{p}}}}
\newcommand{\uq}{{\underline{\mathbf{q}}}}
\newcommand{\uu}{{\underline{\mathbf{u}}}}
\newcommand{\us}{{\underline{\mathbf{s}}}}
\newcommand{\bF}{{\mathbf{F}}}
\newcommand{\bos}{{\mathbf{s}}}
\newcommand{\bop}{\mathbf{p}}
\newcommand{\boq}{\mathbf{q}}
\newcommand{\boi}{{\mathbf{i}}}
\newcommand{\boe}{{\mathbf{e}}}
\newcommand{\bu}{{\mathbf{U}}}
\newcommand{\bi}{{\mathbf{I}}}
\newcommand{\bx}{{\mathbf{x}}}
\newcommand{\bpr}{\mathbf{p}\mathbf{r}^-}
\def \eql {\,{\vartriangleright}\,}
\def \eqd {\,{\blacktriangleright}\,}
\newcommand{\kbar}{{\overline{k}}}
\newcommand{\pL}{\mathscr{L}}
\newcommand{\pv}{\mathpzc{v}}
\newcommand{\pW}{\mathpzc{W}}
\DeclareMathAlphabet{\mathpzc}{OT1}{pzc}{m}{it}
\renewcommand\omega{\varpi}
\newcommand{\csa}{\mathfrak{h}}
\newcommand{\lien}{\mathfrak{n}}
\newcommand\lieg{\mathfrak{g}}
\newcommand{\etaseq}{\underline{\eta}}
\newcommand\loneseq{\lseq^1}
\newcommand\eseq{\underline{\eta}}
\newcommand{\etwoseq}{\eseq^2}
\newcommand{\eoneseq}{\underline{\eta}^1}
\newcommand{\erseq}{\underline{\eta}^r}
\newcommand\lrseq{\lseq^r}
\newcommand{\lrmseq}{\lseq^{r-1}}
\newcommand\pattern{\mathcal{P}}
\newcommand{\lieh}{\csa}
\newcommand\pop{\mathfrak{P}}
\newcommand\lseq{\underline{\lambda}}
\newcommand\popset{\mathbb{P}}
\newcommand\currlieg{\lieg[t]}
\newcommand\curralg\currlieg
\newcommand{\lieb}{\mathfrak{b}}
\title[Bases for local Weyl modules in type $C$]
{Bases for local Weyl modules in type $C$}
\author{B.~Ravinder}
\address{Chennai Mathematical Institute\\ Plot H1, SIPCOT IT Park, Siruseri\\ Kelambakkam 603103,
India}
\email{bravinder@cmi.ac.in}
\subjclass[2010]{17B67 (17B10)}
\keywords{Current algebra, Weyl module, Bases in type $C$}
\gdef\SetFigFont#1#2#3#4#5{
  \reset@font\fontsize{#1}{#2pt}
  \fontfamily{#3}\fontseries{#4}\fontshape{#5}
  \selectfont}
\begin{document}
\allowdisplaybreaks
\numberwithin{equation}{section}
\begin{abstract}
We give a Poincare-Birkhoff-Witt type basis for local Weyl modules of the current algebra of type $C$. As a consequence, we get a fermionic character formula for these modules.
\end{abstract}

\maketitle
\section{Introduction}
Let $\lieg$ be a finite-dimensional complex simple Lie algebra and $\lieg[t]=\lieg\otimes\mathbb{C}[t]$ be the corresponding current algebra. 
The degree grading on $\complex[t]$ gives a natural $\mathbb{Z}$-grading on $\lieg[t]$ and makes it a graded Lie algebra.
Local Weyl modules, introduced by Chari and Pressley \cite{CL} are important finite-dimensional graded
$\lieg[t]$--modules.
To each dominant integral weight $\lambda$ of~$\lieg$,  there corresponds a local Weyl module $W(\lambda)$. 
The module $W(\lambda)$ is universal among the finite-dimensional $\lieg[t]$--modules generated by a highest weight vector of weight $\lambda$; any such module is uniquely a quotient of $W(\lambda)$. The zeroth graded piece of $W(\lambda)$ is the irreducible representation $V(\lambda)$ of $\lieg$ and they both have the same $\lieg$-weights.

The work of 
Chari, Pressley, and Loktev \cite{CL, CP}  gives  nice monomial bases for  the local Weyl modules when $\lieg$ is of type $A$.
Recently, in \cite{RRV2}, an elegant combinatorial description for its parametrizing set is given, namely, as the
set of {\em partition overlaid patterns} (POPs).  Classical Gelfand-Tsetlin (GT) patterns \cite{GT} index a basis of $V(\lambda)$. It is shown in \cite{RRV2} that extending them by a partition overlay produces a parametrizing set for a basis of $W(\lambda)$.

Bases for local Weyl modules are not known beyond type $A$.  However  there are generalizations of GT patterns for all classical Lie algebras (see \cite{BZ, M}).
It is now natural to ask whether we can define overlays on these generalized patterns and use them to parametrize a monomial basis of local Weyl modules in these types.  In this paper we answer this question in the case  of type $C$, i.e., when $\lieg=\mathfrak{sp}_{2r}$, the symplectic Lie algebra.
We consider type $C$ for two reasons: (a) the generalized GT patterns are simpler (b) the local Weyl module associated to a fundamental weight is irreducible as a module for the underlying simple Lie algebra.

More precisely, for the Lie algebra $\lieg$ of type $C$, we introduce the notion of a {\em partition overlaid pattern} (POP), and corresponding to each POP, we define a monomial. We then prove that the  monomials corresponding to  POPs with bounding sequence $\lambda$, form a basis for the local Weyl module $W(\lambda)$ (Theorem~\ref{weylbasis}).  Analogous to \cite{CL},  the proof involves constructing a filtration for the local Weyl modules, studying the associated graded spaces, and uses an induction on the rank of the Lie algebra $\lieg$.  As an auxiliary step, we use the intermediate subalgebras $\mathfrak{sp}_{2r-1} (\mathfrak{sp}_{2r-2}\subset\mathfrak{sp}_{2r-1}\subset\mathfrak{sp}_{2r})$. These intermediate subalgebras and their representations have been studied by Gelfand-Zelevinsky \cite{GZ}, Proctor \cite{P}, and Shtepin \cite{S}.

As an application of our main theorem, we obtain  a fermionic formula for the character of $W(\lambda)$ (Corollary~\ref{fermionic_char}). Further since the basis for the  zeroth graded piece of $W(\lambda)$ corresponds to the empty overlay, we also get a monomial basis for $V(\lambda)$ indexed by generalized GT patterns. The exponents occurring in the monomials are certain differences of adjacent entries in the pattern. The  type $C$ classical GT basis \cite{M} has an analogous description in which these same exponents occur, with the Mickelsson lowering operators $z_{k,i}, z_{i,-k}$ (see \cite[Theorem~3.5]{M}) in place of the Chevalley generators $x^-_{i,k}, x^-_{i,\overline{k}}$ (see Corollary~\ref{irrbasis}).
We also obtain another proof of the formula for the dimension of the local Weyl module (Corollary~\ref{dimformula}). Other known proofs use path models, crystal bases, and the theory of global basis (see \cite{N}). 

In \cite{N}, Naoi has shown that the local Weyl module for $\mathfrak{sp}_{2r}[t]$ admits a filtration by $\mathfrak{sp}_{2r}[t]$-modules whose successive quotients are Demazure modules.
As a corollary to our main results, we prove that the local Weyl module for $\mathfrak{sp}_{2r}[t]$ admits a filtration by $\mathfrak{sp}_{2r-2}[t]$-modules whose successive quotients are local Weyl modules (Corollary~\ref{weylfiltration}). This result is analogous to the well-known result that an irreducible module for $\mathfrak{sp}_{2r}$ admits a filtration by $\mathfrak{sp}_{2r-2}$-modules whose successive quotients are irreducible modules \cite{Z}. 

\subsection*{Acknowledgements}
The author thanks K. N. Raghavan and S. Viswanath for many helpful discussions.
He acknowledges support from DST under the INSPIRE Faculty scheme.
He also acknowledges received fellowship from  Infosys Foundation.
\section{Notation and Preliminaries}\label{notn}
Throughout the paper, $\mathbb{Z}$ denotes the set of integers, $\mathbb{N}$ the set of non-negative integers, $\mathbb{N}_+$ the set of positive integers,  
   $\complex$  the field of complex numbers, 
   $\complex[t]$ the polynomial ring,   and $\mathbf{U}(\mathfrak{a})$ the universal
enveloping algebra corresponding to a complex Lie algebra $\mathfrak{a}$.
\subsection{}
Let $\mathfrak{gl}_{2r}$ be the Lie algebra of $2r\times 2r$ complex matrices whose rows and columns are enumerated by the indices $1, 2,\ldots, r,$ $-r,\ldots,-1$.
Set $I=\{1,2,\ldots,r\}$.
The symplectic Lie algebra $\mathfrak{g}=\mathfrak{sp}_{2r}$ is the space  of matrices $(a_{i,j})\in\mathfrak{gl}_{2r}$ for which 
$$a_{i,j}=-\textup{sgn}\,i\cdot\textup{sgn}\,j\cdot a_{-j,-i},\qquad\forall\,\,i,j\in
\pm  I.$$ For $i,j\in\pm I$, let $E_{i,j}$  be the $2r\times 2r$ matrix with $1$  in the $(i,j)^{th}$
position
and $0$ 
elsewhere. 
 Fix the Cartan subalgebra $\csa$ of $\lieg$ spanned by the elements $E_{i,i}-E_{-i,-i}, i\in I$. 
 Define the elements $\varepsilon_1,\ldots,\varepsilon_r\in\csa^*$ by the equalities $\langle\varepsilon_i, \,E_{j,j}-E_{-j,-j}\rangle=\delta_{i,j},\,\,\forall\,\,i,j\in I$. Then the root system $R$ of $\lieg$ with respect to $\csa$ is given by
$$ R=\{\pm\varepsilon_i\pm\varepsilon_j:1\leq i< j\leq r\}\cup\{\pm 2\varepsilon_i:1\leq i\leq r\}.$$
 
 Fix $\alpha_1=\varepsilon_1-\varepsilon_2, \ldots, \alpha_{r-1}=\varepsilon_{r-1}-\varepsilon_r, \alpha_r=2\varepsilon_r$, a set of simple roots in $R$, then the set $R^+$  of positive roots is given by
$$\alpha_{i,j}=\alpha_i+\cdots+\alpha_j, \qquad \alpha_{i,\overline{j}}=\alpha_i+\cdots+\alpha_r+\alpha_{r-1}+\cdots+\alpha_j,\qquad 1\leq i \leq j \leq r.$$ 
Note that $\alpha_{i,r}=\alpha_{i,\overline{r}},\,\,\forall\,\,i\in I$. Set $R^-=-R^+$. 
Let $Q=\sum_{i\in I}\mathbb{Z}\alpha_i$ be the root lattice of $\lieg$ and set $Q^+=\sum_{i\in I}\mathbb{N}\alpha_i$.
 Define the subalgebras $\mathfrak{n}^{\pm}$ and $\mathfrak{b}^\pm$ of $\lieg$ by 
$$\mathfrak{n}^{\pm}=\bigoplus_{1\leq i \leq j < r} \complex x_{i,j}^{\pm}\bigoplus_{1\leq i \leq j \leq r} \complex x_{i,\overline{j}}^{\pm},\qquad \mathfrak{b}^\pm=\lien^\pm\oplus\lieh,$$
where \begin{gather*}
x_{{i,j-1}}^+=E_{i,j}-E_{-j,-i}, \quad x_{{i,j-1}}^-=E_{j,i}-E_{-i,-j}, \qquad
x_{{i,\overline{j}}}^+=E_{i,-j}+E_{j,-i}, \quad x_{{i,\overline{j}}}^-=E_{-j,i}+E_{-i,j},
\end{gather*}
for all $1\leq i<j\leq r$, and $x_{i,\overline{i}}^+=E_{i,-i},\,  x_{i,\overline{i}}^-=E_{-i,i},$ for all $i\in I.$
Set $$x_{i,r}^\pm=x_{i,\overline{r}}^\pm, \quad\forall\,\, i\in I, \qquad\alpha_{i,j}^\vee=[x_{i,j}^+,\,x_{i,j}^-], \qquad \textup{and} \qquad \alpha_{i,\overline{j}}^\vee=[x_{i,\overline{j}}^+, \,x_{i,\overline{j}}^-], \quad\forall \,\, 1\leq i\leq j\leq r.$$
 Now, we have the triangular decomposition: $\lieg= \mathfrak{n}^{-}\oplus \csa \oplus \mathfrak{n}^{+}.$

Let $\omega_i=\varepsilon_1+\cdots+\varepsilon_i, i\in I,$ be the set of fundamental weights 
of $\lieg$.
Let $P=\sum_{i\in I}\mathbb{Z}\varpi_i$ be the weight lattice of $\lieg$ and  $P^+=\sum_{i\in I}\nat\varpi_i$ be the set of dominant integral weights of $\lieg$.
For $\lambda=\sum_{i=1}^r m_i\omega_i\in P^+,$ the corresponding finite-dimensional
irreducible $\lieg$--module $V(\lambda)$ is the cyclic $\lieg$--module generated by 
$v_\lambda$ with defining relations:
$$\mathfrak{n}^+\,v_\lambda=0, \qquad h\,v_\lambda=\langle\lambda, \,h\rangle v_\lambda, \qquad (x_{i,i}^-)^{m_i+1}\, v_\lambda=0,$$
for all $i\in I$ and $h\in \csa$. 
Given an  $\lieh$--module $V$, we say that an element $\mu\in\csa^*$ is a weight of $V$ if there exists a non-zero element $v\in V$ such that $h\cdot v=\langle\mu,\,h\rangle v$, for all $h\in\csa$. Let $V_{\mu}$ denote the weight space of  $V$ of weight $\mu$.

For $\lambda=m_1\varpi_1+\cdots+m_r\varpi_r\in P^+$, we associate an integer tuple
$\underline{\lambda}=(\lambda_1\geq\lambda_2\geq\cdots\geq \lambda_r\geq \lambda_{r+1}=0)$ by setting $\lambda_i=m_i+\cdots+m_r$. Conversely, given such integer tuple, we associate an element $\lambda$ of $P^+$ by $\lambda=\lambda_1\varepsilon_1+\cdots+\lambda_r\varepsilon_r=m_1\varpi_1+\cdots+m_r\varpi_r$, where $m_i=\lambda_i-\lambda_{i+1}$. Thus the elements of $P^+$ are identified with the integer tuples of the form $(\lambda_1\geq\lambda_2\geq\cdots\geq \lambda_r\geq\lambda_{r+1}=0)$.
\subsection{}
The intermediate Lie subalgebra $\mathfrak{g}_{r-1/2}=\mathfrak{sp}_{2r-1}$ is singled out from $\lieg$ by the conditions $a_{i,r}=0,$ $a_{-r,i}=0,\,\,\,\forall \,\,i\in\pm I$, and the subalgebra $\mathfrak{g}_{r-1}=\mathfrak{sp}_{2r-2}$ from $\lieg_{r-1/2}$ by the conditions $a_{r,j}=0,$ $a_{j,-r}=0,\,\,\,\forall \,\,j\in\pm I.$ The root system $R_{r-1/2}$ of the Lie algebra $\lieg_{r-1/2}$ is given by  
$$R_{r-1/2}=\{\alpha\in R_r: (\alpha, \varepsilon_r)\geq 0\}.$$
Set $R_{r-1/2}^{\pm}=R^{\pm}\cap R_{r-1/2}$,  $\mathfrak{n}_{r-1/2}^{\pm}=\lien^\pm\cap\lieg_{r-1/2}$, and $\mathfrak{b}_{r-1/2}^\pm=\lieb^\pm\cap\lieg_{r-1/2}.$ 
Observe that 
$$\mathfrak{n}^+_{r-1/2}=\bigoplus_{1\leq i \leq j < r-1} \complex x_{i,j}^{+}\bigoplus_{1\leq i \leq j \leq r} \complex x_{i,\overline{j}}^{+}, \ \ \ \mathfrak{n}^-_{r-1/2}=\bigoplus_{1\leq i \leq j < r} \left(\complex x_{i,j}^{-}\oplus\complex x_{i,\overline{j}}^{-}\right), \ \ \  \mathfrak{b}_{r-1/2}^\pm=\lien_{r-1/2}^\pm\oplus\csa_{r-1}, $$
where $\csa_{r-1}$ is the Lie subalgebra of $\lieh$ spanned by the elements $\alpha_{i,i}^\vee, \, 1\leq i<r.$
Now, we have the triangular decomposition: $\lieg_{r-1/2}= \mathfrak{n}_{r-1/2}^{-}\oplus \csa_{r-1} \oplus \mathfrak{n}_{r-1/2}^{+}.$

For $\eta=\sum_{i=1}^rn_i\omega_i\in P^+$, we recall from \cite[Theorem 2]{S}, the corresponding finite-dimensional  highest weight  $\lieg_{r-1/2}$--module $\pL(\eta)$. It is the cyclic $\lieg_{r-1/2}$--module generated by $\pv_\eta$ with defining relations:
$$\mathfrak{n}^+_{r-1/2}\,\pv_\eta=0, \qquad h\,\pv_\eta=\langle\eta, \,h\rangle\pv_\eta, \qquad (x_{i,i}^-)^{n_i+1}\,\pv_\eta=0, \qquad (x^-_{r-1, \overline{r-1}})^{n_{r-1}+n_r+1}\,\pv_\eta=0,$$
for all $1\leq i<r$ and $h\in \csa_{r-1}$. 
\begin{theorem}\cite[Proposition 4.3]{S}\label{theoremofS}
\begin{enumerate}
\item \label{theoremofSp1}For $\lambda\in P^+$, the irreducible $\lieg$--module $V(\lambda)$ has a filtration by
$\lieg_{r-1/2}$--modules whose successive  quotients are $\pL(\eta_1\varepsilon_1+\cdots+\eta_r\varepsilon_r)$, where $\lambda_i\geq \eta_i\geq\lambda_{i+1}, \,\,\forall\,\,1\leq i\leq r$.
\item \label{theoremofSp2}For $\eta\in P^+$, the $\lieg_{r-1/2}$-- module $\pL(\eta)$ has a filtration by $\lieg_{r-1}$--modules whose successive quotients are $V(\nu_1\varepsilon_1+\cdots+\nu_{r-1}\varepsilon_{r-1}),$ where $\eta_i\geq\nu_i\geq\eta_{i+1},\,\,\forall\,\,1\leq i<r$.
In particular, for $\eta, \mu\in P^+$, the $\lieg_{r-1/2}$--modules  $\pL(\eta)$ and $\pL(\mu)$ are not isomorphic, unless $\eta=\mu$.
\end{enumerate}
\end{theorem}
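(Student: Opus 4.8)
The plan is to prove the two parts by one uniform scheme, since $(\lieg,\lieg_{r-1/2})$ in part~(\ref{theoremofSp1}) plays the role that $(\lieg_{r-1/2},\lieg_{r-1})$ plays in part~(\ref{theoremofSp2}): in both, the larger module is to be filtered by modules for the subalgebra one half-step down, with parametrizing labels interlacing the original highest weight. For part~(\ref{theoremofSp1}): relative to $\lieg_{r-1/2}$, the algebra $\lieg$ has the ``extra'' lowering operators $x^-_{i,\overline r}$ $(1\le i\le r)$, the extra raising operators $x^+_{i,r-1}$ $(1\le i<r)$, and one extra Cartan direction $E_{r,r}-E_{-r,-r}$; by PBW this gives a factorization $\mathbf{U}(\lieg)=\mathbf{U}(\mathfrak{n}^-_{\mathrm{ex}})\,\mathbf{U}(\lieg_{r-1/2}+\complex(E_{r,r}-E_{-r,-r}))\,\mathbf{U}(\mathfrak{n}^+_{\mathrm{ex}})$, where $\mathfrak{n}^\pm_{\mathrm{ex}}$ are the spans of the extra operators. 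The crux is then to construct, for each integer tuple $\eta$ with $\lambda_i\ge\eta_i\ge\lambda_{i+1}$, a nonzero $\lieg_{r-1/2}$-highest weight vector $u_\eta\in V(\lambda)$ of weight $\eta$ --- annihilated by $\mathfrak{n}^+_{r-1/2}$, of $\csa_{r-1}$-weight $\eta$, and killed by the powers of $x^-_{i,i}$ and $x^-_{r-1,\overline{r-1}}$ appearing in the presentation of $\pL(\eta)$ --- so that $\mathbf{U}(\lieg_{r-1/2})u_\eta$ is a quotient of $\pL(\eta)$. One takes $u_\eta$ to be the image under Zhelobenko's extremal projector for $\mathfrak{n}^+_{r-1/2}$ of a monomial in the $x^-_{i,\overline r}$ with exponents governed by the differences $\lambda_i-\eta_i$, applied to $v_\lambda$; for $\eta=\lambda$ one has $u_\lambda=v_\lambda$ directly.

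Granting the family $\{u_\eta\}$, fix a linear order on the (finitely many) admissible $\eta$ refining the order in which the $\csa$-weights of the $u_\eta$ descend from $\lambda$, and set $M_k=\sum_{j\le k}\mathbf{U}(\lieg_{r-1/2})u_{\eta_j}$. This is a chain $0\subseteq M_1\subseteq\cdots\subseteq M_N\subseteq V(\lambda)$ of $\lieg_{r-1/2}$-submodules with $M_k/M_{k-1}$ a quotient of $\pL(\eta_k)$ (the order can be arranged so that the image of $u_{\eta_k}$ generates the quotient). To show the chain exhausts $V(\lambda)$ and that each quotient is all of $\pL(\eta_k)$, compare $\csa_{r-1}$-characters:
\[
\mathrm{ch}_{\csa_{r-1}}V(\lambda)=\sum_k\mathrm{ch}_{\csa_{r-1}}(M_k/M_{k-1})\ \le\ \sum_{\eta}\mathrm{ch}_{\csa_{r-1}}\pL(\eta)
\]
coefficientwise, so it suffices that the right-hand sum equals $\mathrm{ch}_{\csa_{r-1}}V(\lambda)$ --- which is exactly the combinatorial identity behind the classical branching $\mathfrak{sp}_{2r}\downarrow\mathfrak{sp}_{2r-2}$, given part~(\ref{theoremofSp2}). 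Equality then forces $M_N=V(\lambda)$ and $M_k/M_{k-1}\cong\pL(\eta_k)$, with no ambiguity since the $\mathrm{ch}_{\csa_{r-1}}\pL(\eta)$ are pairwise distinct.

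Part~(\ref{theoremofSp2}) runs the same way with $(\lieg,\lieg_{r-1/2})$ replaced by $(\lieg_{r-1/2},\lieg_{r-1})$: from the extra lowering operators of $\lieg_{r-1/2}$ over $\lieg_{r-1}$ one builds nonzero $\lieg_{r-1}$-highest weight vectors $w_\nu\in\pL(\eta)$ for each $\nu$ with $\eta_i\ge\nu_i\ge\eta_{i+1}$ $(1\le i<r)$, each generating a copy of the irreducible $V(\nu)$ --- here $\lieg_{r-1}=\mathfrak{sp}_{2r-2}$ is reductive and $\pL(\eta)$ finite-dimensional, so the filtration is a genuine direct sum decomposition. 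The $w_\nu$ give $\mathrm{ch}_{\csa_{r-1}}\pL(\eta)\ge\sum_\nu\mathrm{ch}_{\csa_{r-1}}V(\nu)$, and the reverse inequality is a PBW estimate against the presentation of $\pL(\eta)$, giving equality. The non-isomorphism assertion is then immediate: the admissible $\nu$ range over the product of the intervals $[\eta_{i+1},\eta_i]$, so the multiset of $\lieg_{r-1}$-constituents of $\pL(\eta)$ determines every $\eta_i$, and any isomorphism $\pL(\eta)\cong\pL(\mu)$ restricts to an isomorphism of $\lieg_{r-1}$-modules, forcing $\eta=\mu$.

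The main obstacle is the construction and verification of the extremal vectors $u_\eta$, $w_\nu$: exhibiting the corrected lowering operators so that the resulting vectors are genuinely annihilated by the nilradical of $\mathfrak{b}^+_{r-1/2}$, checking they are nonzero, and verifying the $\mathfrak{sl}_2$-string relations --- all of which are delicate computations inside $\mathbf{U}(\mathfrak{sl}_2)$ --- while everything afterwards is weight bookkeeping and the dimension/character count. A cleaner but more machinery-heavy route is to set up Zhelobenko's Mickelsson-algebra formalism directly for the non-reductive pair $\lieg_{r-1/2}\subset\lieg$, but building that theory in the odd-symplectic setting is itself the bulk of the work.
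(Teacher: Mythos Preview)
The paper does not contain a proof of this statement: it is quoted verbatim as \cite[Proposition~4.3]{S} and used as input, with no argument supplied. So there is nothing in the paper to compare your proposal against.

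That said, your outline is broadly in the spirit of Shtepin's original argument in \cite{S}, which does proceed via Mickelsson--Zhelobenko lowering operators adapted to the intermediate algebra $\mathfrak{sp}_{2r-1}$, constructs the highest-weight vectors for the subalgebra, and then matches characters. You have correctly identified the genuine content as the construction and nonvanishing of the vectors $u_\eta$, $w_\nu$; the filtration and character-counting afterwards are indeed routine once those exist. One point to be careful about: your character comparison for part~(\ref{theoremofSp1}) invokes part~(\ref{theoremofSp2}) (to know $\mathrm{ch}_{\csa_{r-1}}\pL(\eta)$), while the proof of part~(\ref{theoremofSp2}) must therefore be self-contained and not appeal back to part~(\ref{theoremofSp1}); you gesture at a ``PBW estimate against the presentation of $\pL(\eta)$'' for the upper bound there, but that step is where the actual work lies and would need to be spelled out to avoid circularity.
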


\subsection{}
The current algebra $\mathfrak{a}[t]$  associated to a Lie algebra $\mathfrak{a}$ is defined by $\mathfrak{a}[t]=\mathfrak{a} \otimes \complex[t]$, with the Lie bracket
\[ [x \otimes t^m,  \,y \otimes t^n]=[x,\, y]\otimes t^{m+n} \quad \forall \,\, x, y \in  \mathfrak{a},\,\, m, n \in \,\mathbb{N}.\]
We regard $\mathfrak{a}$ as a subalgebra of $\mathfrak{a}[t]$ by mapping $x\mapsto x\otimes 1$ for $x\in\mathfrak{a}$. We denote the maximal ideal of $\mathfrak{a}[t]$ spanned by elements $x\otimes t^m, x\in\mathfrak{a}, m\in\mathbb{N}_+$ by $\mathfrak{a}t[t].$
The degree grading on $\complex[t]$ gives a natural $\mathbb{N}$-grading on $\mathbf{U}(\mathfrak{a}[t])$: the element $(x_1\otimes t^{m_1})\cdots (x_k\otimes t^{m_k})$, for $x_i\in\mathfrak{a}, m_i\in \mathbb{N},$ has grade $m_1+\cdots+m_k.$
A graded $\mathfrak{a}[t]$--module is a $\mathbb{Z}$-graded vector space
$M=\bigoplus_{n\in\mathbb{Z}} M[n]$ such that
\[ (\mathfrak{a}\otimes t^m) M[n]\subset M[m+n], \quad \forall\,\, m\in\mathbb{N},\, n\in\mathbb{Z}.\]

Given $\lambda=\sum_{i=1}^r m_i\omega_i\in P^{+}$, the {\em local Weyl module\/} $W(\lambda)$ is the cyclic  $\lieg[t]$--module with generator $w_\lambda$ and relations (see~\cite[\S1.2.1]{CL}):
\begin{equation}\label{weyldefreln} \mathfrak{n}^{+}[t]\,w_{\lambda}=0, \qquad (h\otimes t^{s})\,w_{\lambda}=\langle\lambda, \,h\rangle\delta_{s,0}\,w_\lambda,\qquad (x_{i,i}^-)^{m_i+1}\,w_{\lambda}=0,\end{equation}
for all $i\in I, h\in\csa,$ and $ s\in\nat$.
We declare the grade of $w_\lambda$ to be zero. Since the defining relations of $W(\lambda)$ are graded, it follows
that $W(\lambda)$ is a graded $\lieg[t]$--module. 

The proof of the following theorem is analogous to that in  \cite[\S \S 1-2]{CP}.
\begin{theorem}\label{theorem_uni1} For all
$\lambda\in P^+$, the local Weyl module $W(\lambda)$ is
finite--dimensional. Moreover, any finite--dimensional $\lieg[t]$--module $V$ generated by an element $v\in V$ and satisfying the
relations
\begin{equation}\label{weylthmreln1}
\mathfrak{n}^{+}[t]\,v=0, \qquad (h\otimes t^{s})\,v=\langle\lambda, \,h\rangle\delta_{s,0}\,v,\,\,\forall\,\,h\in\csa, s\in\nat,
\end{equation}
is a quotient of $W(\lambda)$.
\end{theorem}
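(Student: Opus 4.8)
The plan is to treat the two assertions separately, in the manner of \cite[\S\S1--2]{CP}.

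\textbf{The universal property (second assertion)} is the soft part. Let $V$ be finite-dimensional, generated by a vector $v$ satisfying \eqref{weylthmreln1}. For each $i\in I$, relations \eqref{weylthmreln1} say that $v$ is killed by $x_{i,i}^+$ and is an $\alpha_{i,i}^\vee$-eigenvector of eigenvalue $m_i\in\nat$, so $v$ is a highest weight vector of weight $m_i$ for the copy of $\sltwo$ spanned by $x_{i,i}^+,x_{i,i}^-,\alpha_{i,i}^\vee$; finite-dimensionality of $\mathbf{U}(\sltwo)v$ then forces $(x_{i,i}^-)^{m_i+1}v=0$. Thus $v$ obeys all the defining relations \eqref{weyldefreln} of $W(\lambda)$, and $w_\lambda\mapsto v$ extends to a surjection $W(\lambda)\twoheadrightarrow V$; once $W(\lambda)$ is known to be finite-dimensional this is precisely the stated universality.

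\textbf{Finite-dimensionality of $W(\lambda)$.} This I would deduce from three points, using the decomposition of $W(\lambda)$ into graded weight spaces $W(\lambda)_\mu[n]$. (i) $W(\lambda)$ is integrable as a $\lieg$-module: $\ad(x_{i,i}^{\pm})$ is locally nilpotent on $\lieg[t]$ (it is so on $\lieg$, and $\complex[t]$ contributes only scalars), and by \eqref{weyldefreln} the generator $w_\lambda$ is annihilated by $x_{i,i}^+$ and by $(x_{i,i}^-)^{m_i+1}$; expanding $(x_{i,i}^\pm)^M(u\,w_\lambda)$ binomially for $u\in\mathbf{U}(\lieg[t])$ then shows $x_{i,i}^{\pm}$ acts locally nilpotently on all of $W(\lambda)=\mathbf{U}(\lieg[t])w_\lambda$. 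Hence the set of $\csa$-weights of $W(\lambda)$ is stable under the Weyl group of $\lieg$ and contained in $\lambda-Q^+$, so it is finite. (ii) Since $\mathfrak{n}^+[t]w_\lambda=0$ and $\csa[t]w_\lambda\subseteq\complex w_\lambda$, PBW gives $W(\lambda)=\mathbf{U}(\mathfrak{n}^-[t])w_\lambda$, and hence $W(\lambda)_\mu[n]$ is spanned by ordered monomials $(x_{\beta_1}^-\otimes t^{k_1})\cdots(x_{\beta_l}^-\otimes t^{k_l})w_\lambda$ with $\beta_j\in R^+$, $\sum_j\beta_j=\lambda-\mu$ and $\sum_j k_j=n$; the weight equation bounds $l$ and confines the multiset $\{\beta_j\}$ to a finite set, so once $n$ is fixed only finitely many exponent vectors occur and $\dim W(\lambda)_\mu[n]<\infty$. (iii) The grading of $W(\lambda)$ is bounded above, i.e.\ $W(\lambda)[n]=0$ for $n\gg0$. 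Granting (iii), $W(\lambda)$ is a finite sum $\bigoplus_{\mu}\bigoplus_{n}W(\lambda)_\mu[n]$ of finite-dimensional spaces, hence finite-dimensional, and the universality follows.

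Point (iii) is the technical heart, and I would establish it exactly as in \cite[\S\S1--2]{CP} using Garland's identities in $\mathbf{U}(\sltwo[t])$: fed the relations \eqref{weyldefreln} and the integrability from (i), these identities — applied inside the $\sltwo$-subalgebras attached to the roots — force the $t$-degrees that can occur in a spanning set of $W(\lambda)$ to be bounded in terms of $\lambda$. This Garland computation is the only genuinely non-formal step; the passage from the root $\sltwo$'s to the simple ones, if one prefers to work with those, is routine because $\lieg[t]$ is generated by $\lieg\otimes1$ and $\lieg\otimes t$. The remaining ingredients — integrability, PBW, and the $\sltwo$-argument for universality — are standard.
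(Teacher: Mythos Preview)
Your proposal is correct and follows exactly the approach the paper indicates: the paper does not write out a proof but merely says ``analogous to \cite[\S\S1--2]{CP},'' and your sketch is a faithful outline of that argument (integrability, PBW giving finite-dimensional graded weight spaces, and Garland's identities to bound the $t$-degree; plus the standard $\sltwo$ step for universality). One small remark: the universal property does not require $W(\lambda)$ to be finite-dimensional, so your clause ``once $W(\lambda)$ is known to be finite-dimensional this is precisely the stated universality'' is unnecessary.
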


\subsection{}
For  $\lambda\in P^+$ and $s\in\mathbb{N}$, the subspace $W(\lambda)[{s}]$ of grade $s$  of the local Weyl module $W(\lambda)$ 
is a $\lieg$-submodule, and we have the following weight space 
decomposition for $W(\lambda)$:
$$
W(\lambda)= \bigoplus_{(\mu,s)\in P\times \mathbb{N}} W(\lambda)_{\mu,s},
$$
where $W(\lambda)_{\mu,s}:=\{w\in W(\lambda)[s]: h\,w=\langle\mu,\,h\rangle w,\,\forall\,h\in\csa\}.$ 
The graded character $\textup{ch}_{q} W(\lambda)$ of $W(\lambda)$  is defined by
\beq
\textup{ch}_{q} W(\lambda):=\sum_{(\mu,s)\in P\times \mathbb{N}} \dim W(\lambda)_
{\mu,s}\, e^{\mu}\,q^{s} \quad\in\integers[P][q].
\eeq
Here  $\integers[P]$ denotes the integral group ring of $P$ with basis $e^\mu, \mu\in P$.

\section{The main result}
\subsection{} The goal of this subsection is to introduce {\em partition overlaid patterns (POPs)} in type $C$. 
\subsubsection{Partitions} A {\em partition} is a non-decreasing sequence of non-negative integers. For a partition $\bos=(\bos(1)\le\cdots\le\bos(\ell))$, set    $|\bos|=\bos(1)+\cdots+\bos(\ell)$.
\subsubsection{Partition fitting into a rectangle} Let $\ell, \ell^\prime$ be non-negative integers. We say that a partition $\bos$  {\em fits into a rectangle} $(\ell, \ell^\prime)$, if $\bos=(\bos(1)\le\cdots\le\bos(\ell))\in\bn^\ell$ and $\bos(\ell)\le \ell'$. Note that the number of partitions that fit into a rectangle $(\ell,\ell')$ is $\binom{\ell+\ell'}{\ell}$.
\subsubsection{Generalized Gelfand-Tsetlin  patterns }\label{s:patterns}
A {\em generalized  Gelfand-Tsetlin (GT) pattern} of type $C$ (or just {\em pattern}) $\mathcal{P}^r$  is an array of
integral row vectors $\eoneseq,\loneseq, \etwoseq, \ldots, \lrmseq, \erseq, \lrseq$:
\begin{align}
&\ \ \quad\qquad\qquad\qquad\qquad\qquad\eta^1_1 \nonumber\\
&\qquad\qquad\qquad\qquad\qquad\lambda^1_1 \nonumber\\
&\qquad\qquad\qquad\qquad\eta^2_1\qquad\eta^2_2\nonumber\\
&\quad\quad\qquad\qquad\cdots\qquad\cdots\qquad\cdots\nonumber\\
&\qquad\qquad\lambda^{r-1}_1\ \quad\qquad \cdots \ \qquad\lambda^{r-1}_{r-1}\nonumber\\
&\qquad\eta^r_1\qquad\eta^{r}_2\qquad\cdots \qquad\eta^{r}_{r-1}\qquad\eta^r_r\nonumber\\
&\lambda^{r}_1\qquad\lambda^{r}_2
\qquad \cdots\qquad \lambda^r_{r-1}\qquad\lambda^{r}_{r}\nonumber
\end{align}
subject to the following conditions:
\[\lambda^j_i\ge \eta^j_i\ge \lambda^j_{i+1}, \quad\forall\,\,1\le i\le j\le r, \qquad \textup{and}\qquad\eta^{j+1}_i\geq\lambda^{j}_i\geq\eta^{j+1}_{i+1},\quad\forall\,\,1\leq i\leq j< r, \] where $\lambda^j_{j+1}=0$. 
The last sequence ${\lseq}^{r}$ of the pattern $\mathcal{P}^r$ is its
 {\em bounding sequence}.    
 \subsubsection{Restricted patterns }\label{s:respatterns}
A {\em restricted pattern} $\mathcal{P}^{r-1/2}$  is an array of
integral row vectors \\ $\eoneseq,\loneseq, \etwoseq, \ldots, \lrmseq, \erseq$ 
in which the following conditions hold:
\[\lambda^j_i\ge \eta^j_i\ge \lambda^j_{i+1} \qquad \textup{and}\qquad\eta^{j+1}_i\geq\lambda^{j}_i\geq\eta^{j+1}_{i+1},\quad\forall\,\,1\leq i\leq j< r.\ \]
The last sequence ${\eseq}^{r}$ of the restricted pattern $\mathcal{P}^{r-1/2}$ is its
 {\em bounding sequence}.

\subsubsection{The weight of a pattern} The {\em weight} of $\pattern^r:\eoneseq,\loneseq, \etwoseq, \ldots, \lrmseq, \erseq, \lrseq$ is defined by
$$a_1\varepsilon_1+a_2\varepsilon_2+\cdots+a_{r}\varepsilon_{r} \in\csa^*, \quad\textup{where}\quad a_j=2\sum_{i=1}^j\eta^j_i-\sum_{i=1}^{j}\lambda^j_i-\sum_{i=1}^{j-1}\lambda^{j-1}_i.$$

 \subsubsection{Differences corresponding to a pattern} Let $\pattern^{r-1/2}:\eoneseq,\loneseq, \etwoseq, \ldots, \lrmseq, \erseq$ be a restricted pattern  and $\pattern^{r}:\pattern^{r-1/2}, \lrseq$ be a pattern . The {\em differences} 
 $\ell_{i,\overline{j}},$ $\ell'_{i,\overline{j}},$ for $1\leq i\leq j\leq r$;  and $\ell_{i,j}$, $\ell^\prime_{i,j}$, for $1\leq i\leq j< r$,  corresponding to $\pattern^r$ are given by
$$\ell_{i,\overline{j}}:=\lambda^j_i-\eta^j_i, \quad   \ell'_{i,\overline{j}}:=\eta^j_i-\lambda^j_{i+1}; \qquad\ell_{i,j}:=\eta_{i}^{j+1}-\lambda^j_{i}, \quad \ell^\prime_{i,j}:=\lambda_{i}^j-\eta^{j+1}_{i+1}.$$ 
We shall also call the differences $\ell_{i,\overline{j}}$, $\ell'_{i,\overline{j}}$, $\ell_{i,j}$, $\ell^\prime_{i,j}$ corresponding to $\pattern^r$, for $1\leq i\leq j< r$,  as the differences corresponding to the restricted pattern $\pattern^{r-1/2}$.

\subsubsection{Partition overlaid patterns (POPs) } \label{s:pops}
A {\em partition overlaid pattern (POP) } in type $C$ consists of the following data
\begin{enumerate}
\item
a generalized GT pattern $\pattern^r$ of type $C$,
\item
for every pair $(i,j)$ of integers with $1\leq i\leq j\leq r$, 
a partition $\bos_{i,\overline{j}}$ that fits into the rectangle $(\ell_{i,\overline{j}}, \ell^\prime_{i,\overline{j}})$, 
\item
for every pair $(i,j)$ of integers with $1\leq i\leq j<r$, 
a partition $\bos_{i,j}$ that fits into the rectangle $(\ell_{i,j}, \ell^\prime_{i,j})$.
\end{enumerate}
For $\lambda\in P^+$, let $\popset^r_{\lambda}$ denote the set  of POPs  with bounding sequence $\lseq$. 
The {\em weight} of a POP is just the weight of the underlying pattern. The number of boxes in a POP $\pop^r$ is the sum $$\sum_{1\leq i\leq j\leq r}|\bos_{i,\overline{j}}
|+\sum_{1\leq i\leq j<r}|\bos_{i,j}|$$
of the number of boxes in each of its constituent partitions. It is denoted by $|\pop^r|$.
\subsubsection{Restricted partition overlaid patterns  } \label{s:respops}
A {\em restricted partition overlaid pattern } consists of a restricted pattern $\pattern^{r-1/2}$, and for every pair $(i,j)$ of integers with $1\leq i\leq j< r$, 
two partitions $\bos_{i,\overline{j}}$ and $\bos_{i,j}$ such that $\bos_{i,\overline{j}}$ fits into the rectangle $(\ell_{i,\overline{j}}, \ell^\prime_{i,\overline{j}})$ and  $\bos_{i,j}$ that fits into the rectangle $(\ell_{i,j}, \ell^\prime_{i,j})$.
For $\eta\in P^+$, let $\popset^{r-1/2}_{\eta}$ denote the set  of restricted POPs  with bounding sequence $\eseq$. 

\subsection{} In this subsection, we state the main result and give some applications. 
\subsubsection{}
 Let $\bF$ denote
 the set of all pairs
$(\ell,\bos)$ which satisfy: $$\ell\in\bn,\ \
 \bos=(\bos(1)\le \cdots \le \bos(\ell))\in\bn^\ell,$$
including the pair $(0, \emptyset)$, where $\emptyset$ is the
empty partition. Given an element $((\ell_1,\bos_1),\ldots,(\ell_j,\bos_j))$ of $\bF^j$, $j>0$, we let $(\ul,\us)$, be the
pair of $j$--tuples,   $\ul=(\ell_1,\cdots,\ell_j)$
and $\us=(\bos_1,\cdots,\bos_j)$. 
For $m\in\bn$,  let
 \begin{equation}\label{defnbf} \bF(m)=\left\{(\ell,\bos)\in\bF: m\geq \ell,\,\, \bos \,\,\textup{fits into the rectangle}\,\, ( \ell, m-\ell)\right\}\end{equation}
For $\lambda = \sum_{i=1}^r m_i \omega_i\in P^+ $  and $1\le j\le
r$, let
\begin{equation}\label{defnbfj} \bF^j(\lambda)=\left\{(\ul,
\us)\in\bF^j: (\ell_i, \bos_i) \in \bF(m_i),\ \ 1 \le i \le
j\right\}.\end{equation}
Observe that $$(\ul,
\us)\in\bF^j(\lambda)\Longrightarrow \lseq-\ul=(\lambda_1-\ell_1\geq\cdots\geq\lambda_j-\ell_j\geq\lambda_{j+1}\geq\cdots\geq\lambda_r\geq\lambda_{r+1}=0)\in P^+.$$

\subsubsection{} Given $(\ell,\bos)\in\bF$, let $\bx^\pm_{i,\overline{j}}(\ell,\bos)$ for $1\le i\le j\le r$, and $\bx^\pm_{i,j}(\ell,\bos)$ for $1\le i\le j< r$, be the elements
of $\bu(\lien^\pm[t])$ defined by
 $$\bx^\pm_{i,\overline{j}}(\ell,
\bos)=(x^\pm_{i,\overline{j}}\otimes t^{\bos(1)})\cdots (x^\pm_{i,\overline{j}}\otimes
t^{\bos(\ell)})\qquad\textup{and}\qquad\bx^\pm_{i,j}(\ell,
\bos)=(x^\pm_{i,j}\otimes t^{\bos(1)})\cdots (x^\pm_{i,j}\otimes
t^{\bos(\ell)})$$ if $\ell>0$ and $\bx^\pm_{i,\overline{j}}(0, \emptyset)
=1=\bx^\pm_{i,j}(0, \emptyset)$.  Given $(\ul,\us)\in\bF^j$, set
$$\bx^\pm_{\overline{j}}(\ul, \us) = \bx^\pm_{1,\overline{j}} (\ell_1,
\bos_1)\cdots\bx^\pm_{{j},\overline{j}} (\ell_j, \bos_j)\qquad\textup{and}\qquad\bx_j^\pm(\ul, \us) = \bx^\pm_{1,j} (\ell_1,
\bos_1)\cdots\bx^\pm_{j,j} (\ell_j, \bos_j). $$

Given a POP $\pop^r=(\pattern^r, \{\bos_{i,\overline{j}}\},  \{\bos_{i,j}\} )$, we associate an element $v_{\pop^r}$ of $\mathbf{U}(\lien^-[t])$ as follows:
$$v_{\pop^r}:=\bx^-_{\overline{1}}(\underline{\ell}_{\overline{1}}, \, \underline{\bos}_{\overline{1}})\,
 \bx^-_{1}(\underline{\ell}_{1}, \underline{\bos}_{1})
 \cdots
  \bx^-_{\overline{r-1}}\,(\underline{\ell}_{\overline{r-1}},\, \underline{\bos}_{\overline{r-1}})\,
   \bx^-_{r-1}(\underline{\ell}_{r-1}, \underline{\bos}_{r-1})\,
    \bx^-_{\overline{r}}(\underline{\ell}_{\overline{r}},\, \underline{\bos}_{\overline{r}}),$$
where 
\begin{equation}\label{eq_ljsj}
\underline{\ell}_{\overline{j}}=(\ell_{1, \overline{j}}, \ldots,\ell_{j, \overline{j}}), \quad \underline{\bos}_{\overline{j}}=(\bos_{1, \overline{j}},\ldots,\bos_{j, \overline{j}}),\quad \underline{\ell}_{{j}}=(\ell_{1, {j}}, \ldots,\ell_{j, {j}}), \quad\textup{and}\quad \underline{\bos}_{{j}}=(\bos_{1, {j}},\ldots,\bos_{j, {j}}).
\end{equation}

Given a restricted POP $\pop^{r-1/2}=(\pattern^{r-1/2}, \{\bos_{i,\overline{j}}\},  \{\bos_{i,j}\} )$, we associate an element $v_{\pop^{r-1/2}}$ of $\mathbf{U}(\lien_{r-1/2}^-[t])$ as follows:
$$v_{\pop^{r-1/2}}:=\bx^-_{\overline{1}}(\underline{\ell}_{\overline{1}},\, \underline{\bos}_{\overline{1}})\,
 \bx^-_{1}(\underline{\ell}_{1}, \underline{\bos}_{1})\,
 \cdots
  \bx^-_{\overline{r-1}}\,(\underline{\ell}_{\overline{r-1}}, \,\underline{\bos}_{\overline{r-1}})\,
   \bx^-_{r-1}(\underline{\ell}_{r-1}, \underline{\bos}_{r-1}),$$
   where $\underline{\ell}_{\overline{j}},$ $\underline{\bos}_{\overline{j}},$  $\underline{\ell}_{{j}},$ and $ \underline{\bos}_{{j}}$ are  as in equation~\eqref{eq_ljsj}.

For $\lambda,\eta \in P^+$, let $\mathcal{B}^r(\lambda)$ be the subset of $\mathbf{U}(\lien^-[t])$ consisting elements of the form 
$v_{\pop^r}$, where $\pop^r\in\popset^{r}_{\lambda},$ and let $\mathcal{B}^{r-1/2}(\eta)$ be the subset of $\mathbf{U}(\lien_{r-1/2}^-[t])$ consisting elements of the form 
$v_{\pop^{r-1/2}}$, where $\pop^{r-1/2}\in\popset^{r-1/2}_{\eta}.$
Observe the following:
\begin{equation}\label{eq_basis1}
\mathcal{B}^r(\lambda)=\bigsqcup_{(\ul,\,\us)\in\bF^r(\lambda)}\mathcal{B}^{r-1/2}(\lseq-\ul)\,\bxur^-(\ul,\us),
\end{equation}
\begin{equation}\label{eq_basis2}
\mathcal{B}^{r-1/2}(\eta)=\bigsqcup_{(\ul',\,\us')\in\bF^{r-1}(\eta)}\mathcal{B}^{r-1}(\etaseq-\ul')\,\bxurm^-(\ul',\us').
\end{equation}

\begin{lemma} \label{lem_weight} Let $\lambda\in P^+$. 
For a POP $\pop^r$ with bounding sequence $\lseq$,
the weight of the element $v_{\pop^r}\,w_\lambda$ in $W(\lambda)$ is equal to the weight of  the POP $\pop^r$.
\end{lemma}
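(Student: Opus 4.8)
The plan is to compute the $\csa$-weight of $v_{\pop^r}\,w_\lambda$ directly by tracking how each factor $x^-_{i,\overline{j}}\otimes t^m$ or $x^-_{i,j}\otimes t^m$ shifts the weight, and then match the total against the formula $a_j = 2\sum_{i=1}^j\eta^j_i - \sum_{i=1}^j\lambda^j_i - \sum_{i=1}^{j-1}\lambda^{j-1}_i$ defining the weight of the pattern. First I would note that tensoring with a power of $t$ does not change the $\csa$-weight of a root vector, so $x^-_{i,\overline{j}}\otimes t^m$ acts with weight $-\alpha_{i,\overline{j}}$ and $x^-_{i,j}\otimes t^m$ with weight $-\alpha_{i,j}$, independent of $m$. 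Hence the weight of $v_{\pop^r}\,w_\lambda$ is $\lambda$ minus a nonnegative integer combination of positive roots, where the root $\alpha_{i,\overline{j}}$ occurs with multiplicity $\ell_{i,\overline{j}}$ (the length of the partition $\bos_{i,\overline{j}}$) and the root $\alpha_{i,j}$ occurs with multiplicity $\ell_{i,j}$.

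Next I would substitute the definitions of the differences, $\ell_{i,\overline{j}} = \lambda^j_i - \eta^j_i$ and $\ell_{i,j} = \eta^{j+1}_i - \lambda^j_i$, and also substitute the root expansions $\alpha_{i,\overline{j}} = \varepsilon_i + \varepsilon_j$ (for $i<j$), $\alpha_{i,\overline{i}} = 2\varepsilon_i$, and $\alpha_{i,j} = \varepsilon_i - \varepsilon_{j+1}$. So the weight of $v_{\pop^r}\,w_\lambda$ equals
\[
\lambda \;-\; \sum_{1\le i\le j\le r}(\lambda^j_i-\eta^j_i)\alpha_{i,\overline j}\;-\;\sum_{1\le i\le j<r}(\eta^{j+1}_i-\lambda^j_i)\alpha_{i,j}.
\]
Since $\lambda = \lseq = \lambda^r$ corresponds to $\sum_i \lambda^r_i\varepsilon_i$, the claim reduces to the identity, for each coordinate $k$,
\[
\lambda^r_k \;-\; \big(\text{coefficient of }\varepsilon_k\text{ in the two sums}\big)\;=\;a_k,
\]
with $a_k = 2\sum_{i=1}^k\eta^k_i - \sum_{i=1}^k\lambda^k_i - \sum_{i=1}^{k-1}\lambda^{k-1}_i$. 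The cleanest way to see this is to proceed by downward induction on the row index $j$ from $r$ to $k$: group the contributions coming from rows $\lambda^j,\eta^j$ and show that passing from the ``$\varepsilon$-coordinates of row $j$'' to those of row $j-1$ exactly reproduces the telescoping in the definition of $a_k$. Concretely, one checks that the coefficient of $\varepsilon_k$ picks up $+2\eta^j_k$ each time index $k$ appears as the left endpoint in $\alpha_{k,\overline j}$-type roots of row $j$ and $-\lambda^{j}_k - \lambda^{j-1}_{k}$ (and related boundary terms) from the transition, so that summing over $j=k,\ldots,r$ telescopes down to the stated $a_k$; the inner-pattern inequalities $\lambda^j_i\ge\eta^j_i\ge\lambda^j_{i+1}$ and $\eta^{j+1}_i\ge\lambda^j_i\ge\eta^{j+1}_{i+1}$ guarantee all the $\ell$'s are nonnegative so the bookkeeping is with honest nonnegative multiplicities.

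The main obstacle is purely combinatorial bookkeeping: carefully accounting for which roots have $\varepsilon_k$ with a $+1$ versus a $-1$, handling the diagonal roots $\alpha_{i,\overline i} = 2\varepsilon_i$ (which contribute a factor of $2$) and the identification $\alpha_{i,r}=\alpha_{i,\overline r}$ separately, and keeping the boundary conventions $\lambda^j_{j+1}=0$ straight. I expect no conceptual difficulty — the proof is a direct verification — but the indexing is delicate enough that I would organize it as a single telescoping sum in each $\varepsilon_k$-coordinate rather than trying to match term by term. An alternative, possibly slicker route is to induct on the number of factors in $v_{\pop^r}$, or to use the block structure \eqref{eq_basis1}–\eqref{eq_basis2} to reduce the rank-$r$ computation to the restricted ($r-1/2$) case plus the outermost layer $\bxur^-(\ul,\us)$, and then to the rank-$(r-1)$ case; this makes the induction on $r$ transparent but still requires the same coordinate identity at the top layer, so the bulk of the work is unchanged.
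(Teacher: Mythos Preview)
Your approach is correct and essentially identical to the paper's: both observe that the weight of $v_{\pop^r}w_\lambda$ is $\lambda-\sum_{1\le i\le j<r}\ell_{i,j}\,\alpha_{i,j}-\sum_{1\le i\le j\le r}\ell_{i,\overline{j}}\,\alpha_{i,\overline{j}}$ and then rewrite in the $\varepsilon$-basis to match the pattern weight. The paper simply asserts the second step (``by expressing this as a linear combination of $\varepsilon_1,\ldots,\varepsilon_r$, we get the result''), whereas you sketch the telescoping explicitly; your remark about nonnegativity of the $\ell$'s is unnecessary for the weight identity but harmless.
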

\begin{proof}
It is easy to observe that the weight of the element $v_{\pop^r}\,w_\lambda$ in $W(\lambda)$ is equal to $$\lambda-\sum_{1\leq i\leq j<r}\ell_{i,j}\,\alpha_{i,j}-\sum_{1\leq i\leq j\leq r} \ell_{i,\overline{j}}\,\alpha_{i,\overline{j}}.$$
By expressing this as a linear combination of $\varepsilon_1,\ldots,\varepsilon_r$, we get the result. 
\end{proof}
\begin{proposition}\label{prop_count}
\begin{enumerate}
\item\label{prop_card1}
For $\eta=n_1\omega_1+\cdots+n_r\omega_r\in P^+$,  we have 
$$|\mathcal{B}^{r-1/2}({\eta})|=\prod_{i=1}^{r}\left(\binom{2r-1}{i}-\binom{2r-1}{i-2}\right)^{n_i}.$$
\item \label{prop_card2}
 For $\lambda=m_1\omega_1+\cdots+m_r\omega_r\in P^+$, we have
$$|\mathcal{B}^r({\lambda})|=\prod_{i=1}^r\left(\binom{2r}{i}-\binom{2r}{i-2}\right)^{m_i}.$$
\end{enumerate}
\end{proposition}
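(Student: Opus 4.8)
The plan is to prove the two formulas simultaneously by induction on the rank $r$, combining the recursive decompositions \eqref{eq_basis1} and \eqref{eq_basis2} with the inductive hypothesis and reducing the resulting multiple sums to the binomial identity $\sum_{\ell=0}^{n}\binom{n}{\ell}x^\ell=(1+x)^n$ followed by Pascal's rule. Abbreviate $c^{(n)}_i:=\binom{n}{i}-\binom{n}{i-2}$, so that the claims read $|\mathcal{B}^{r-1/2}(\eta)|=\prod_{i=1}^r(c^{(2r-1)}_i)^{n_i}$ and $|\mathcal{B}^r(\lambda)|=\prod_{i=1}^r(c^{(2r)}_i)^{m_i}$. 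The base case $r=1$ is immediate: a restricted pattern is a single row, so $|\mathcal{B}^{1/2}(\eta)|=1=(c^{(1)}_1)^{n_1}$; and a POP of type $C_1$ amounts to a choice of $m_1\ge\eta^1_1\ge 0$ together with a partition fitting into $(m_1-\eta^1_1,\eta^1_1)$, so $|\mathcal{B}^1(m_1\omega_1)|=\sum_{k=0}^{m_1}\binom{m_1}{k}=2^{m_1}=(c^{(2)}_1)^{m_1}$.

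For the inductive step, fix $r\ge 2$ and assume $|\mathcal{B}^{r-1}(\nu)|=\prod_{i=1}^{r-1}(c^{(2r-2)}_i)^{p_i}$ for $\nu=\sum_i p_i\omega_i$ (the rank-$(r-1)$ case of \eqref{prop_card2}). Taking cardinalities in \eqref{eq_basis2} and summing over the $\prod_{i=1}^{r-1}\binom{n_i}{\ell'_i}$ choices of $\us'$ compatible with a fixed $\ul'=(\ell'_1,\dots,\ell'_{r-1})$, we obtain
\[
|\mathcal{B}^{r-1/2}(\eta)|=\sum_{\ul'}\Bigl(\,\prod_{i=1}^{r-1}\binom{n_i}{\ell'_i}\Bigr)\,\bigl|\mathcal{B}^{r-1}(\etaseq-\ul')\bigr|.
\]
The crucial point is that the $\omega$-coordinates of $\etaseq-\ul'$ depend affine-linearly on $\ul'$: they equal $n_i-\ell'_i+\ell'_{i+1}$ for $1\le i\le r-2$ and $n_{r-1}+n_r-\ell'_{r-1}$ for $i=r-1$. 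Substituting the inductive formula and regrouping the powers of each $c^{(2r-2)}_i$ according to the single variable $\ell'_i$, the multiple sum factors into one-variable sums of the form $(1+x)^{n_i}$, with $x=1/c^{(2r-2)}_1$ for $i=1$ and $x=c^{(2r-2)}_{i-1}/c^{(2r-2)}_i$ for $2\le i\le r-1$; the denominators produced by the binomial theorem then cancel, and the sum collapses to
\[
|\mathcal{B}^{r-1/2}(\eta)|=(c^{(2r-2)}_{r-1})^{n_r}\,(c^{(2r-2)}_1+1)^{n_1}\prod_{i=2}^{r-1}\bigl(c^{(2r-2)}_i+c^{(2r-2)}_{i-1}\bigr)^{n_i}.
\]
Finally the Pascal-type identities $c^{(2r-1)}_1=c^{(2r-2)}_1+1$, $c^{(2r-1)}_i=c^{(2r-2)}_i+c^{(2r-2)}_{i-1}$ for $2\le i\le r-1$, and $c^{(2r-1)}_r=c^{(2r-2)}_{r-1}$ (the last because $c^{(2r-2)}_r=0$) assemble this into $\prod_{i=1}^r(c^{(2r-1)}_i)^{n_i}$, which is \eqref{prop_card1} at rank $r$.

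The statement \eqref{prop_card2} at rank $r$ then follows from the same argument applied to \eqref{eq_basis1}: here $|\mathcal{B}^r(\lambda)|=\sum_{\ul}\bigl(\prod_{i=1}^r\binom{m_i}{\ell_i}\bigr)\,|\mathcal{B}^{r-1/2}(\lseq-\ul)|$, the $\omega$-coordinates of $\lseq-\ul$ are $m_i-\ell_i+\ell_{i+1}$ for $1\le i\le r-1$ and $m_r-\ell_r$, the sum factors through the same one-variable $(1+x)^{m_i}$'s, and the identities $c^{(2r)}_1=c^{(2r-1)}_1+1$ together with $c^{(2r)}_i=c^{(2r-1)}_i+c^{(2r-1)}_{i-1}$ for $2\le i\le r$ yield $\prod_{i=1}^r(c^{(2r)}_i)^{m_i}$.

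The main obstacle is the combinatorial bookkeeping behind these reductions: one must check that the $\omega$-coordinates of $\etaseq-\ul'$ and of $\lseq-\ul$ are exactly the affine-linear functions stated --- in particular that at the two ends the first fundamental coordinate picks up the additive constant $1$ while the last absorbs $n_r$, respectively $m_r$, with no shift --- and that, after the multiple sum is factored, the denominators cancel so as to leave precisely the data on which Pascal's rule acts. Once these points are settled, the rest is routine.
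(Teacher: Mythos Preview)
Your argument is correct and follows essentially the same route as the paper: induction on $r$ via the decompositions \eqref{eq_basis1}--\eqref{eq_basis2}, factoring the resulting multiple sum into one-variable binomial sums, and finishing with Pascal's rule together with the boundary identity $c^{(2r-2)}_r=0$. The only substantive differences are cosmetic: the paper sets $\ell'_r=0$ and writes the last $\omega$-coordinate of $\etaseq-\ul'$ as $n_{r-1}-\ell'_{r-1}$ with a separate factor $(c^{(2r-2)}_{r-1})^{n_r}$, whereas you package these together as $n_{r-1}+n_r-\ell'_{r-1}$; and your base case $|\mathcal{B}^{1/2}(\eta)|=1$ is in fact the correct value (the paper's ``$0$'' is a typo, since the formula gives $(c^{(1)}_1)^{n_1}=1$).
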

\begin{proof}
We proceed by induction on $r$. In case $r=1$, it is easy to see that $|\mathcal{B}^{1/2}({\eta})|=0$ and $|\mathcal{B}^1({\lambda})|=\sum_{\ell=0}^{m_1}\binom{m_1}{\ell}=2^{m_1}$. 
From \eqref{eq_basis2}, we have
$$|\mathcal{B}^{r-1/2}({\eta})|=\sum_{\ul'}|\mathcal{B}^{r-1}(\etaseq-\ul')|\prod_{i=1}^{r-1}\binom{n_i}{\ell'_i},$$
where the sum is over all $\ul'=(\ell'_1,\ldots,\ell'_{r-1})\in\bn^{r-1}$ such that $0\leq \ell'_i\leq n_i,$ for $1\leq i<r$. Set $\ell'_{r}=0$. 
Now using the induction hypothesis, we get
 \begin{align*}
 |\cal{B}^{r-1/2}({\eta})| =&
\sum_{\ul'} \prod_{i=1}^{r-1} \left(\binom{2r-2}{i}-\binom{2r-2}{i-2}\right)^{n_i-\ell'_i+\ell'_{i+1}} \left(\binom{2r-2}{r-1}-\binom{2r-2}{r-3}\right)^{n_r}\,\,
\prod_{i=1}^{r-1}\binom{n_i}{\ell'_i} \\=&\prod_{ i=1}^{r-1}
\sum_{\ell'_i=0}^{n_i} \binom{n_i}{\ell'_i}
\left(\binom{2r-2}{i}-\binom{2r-2}{i-2}\right)^{n_i-\ell'_i} \left(\binom{2r-2}{i-1}-\binom{2r-2}{i-3}\right)^{\ell'_i} \\ &\times  \left(\binom{2r-2}{r-1}-\binom{2r-2}{r-3}\right)^{n_r} \\=&
 \prod_{i=1}^{r-1} \left(\binom{2r-1}{i}-\binom{2r-1}{i-2}\right)^{n_i} \left(\binom{2r-2}{r-1}-\binom{2r-2}{r-3}\right)^{n_r},
 \end{align*}
where the last equality is a consequence of the observations that
$$\binom{2r-2}{i}+\binom{2r-2}{i-1}=\binom{2r-1}{i}\qquad\textup{and}\qquad\binom{2r-2}{i-2}+\binom{2r-2}{i-3}=\binom{2r-1}{i-2}.$$
Since $\binom{2r-2}{r-1}-\binom{2r-2}{r-3}=\binom{2r-1}{r}-\binom{2r-1}{r-2}$, we obtain part \eqref{prop_card1}.  

From  \eqref{eq_basis1}, we have
$$|\mathcal{B}^r({\lambda})|=\sum_{\ul}|\mathcal{B}^{r-1/2}(\lseq-\ul)| \prod_{i=1}^r\binom{m_i}{\ell_i},$$
where the sum is over all $\ul=(\ell_1,\ldots,\ell_{r})\in\bn^r$ such that $0\leq \ell_i\leq m_i,$ for $1\leq i\leq r$. Set $\ell_{r+1}=0$. Now using part \eqref{prop_card1}, we get
\begin{align*}
|\mathcal{B}^r(\lambda)|&=\sum_{\ul}\prod_{i=1}^{r}\left(\binom{2r-1}{i}-\binom{2r-1}{i-2}\right)^{m_i-\ell_i+\ell_{i+1}} \prod_{i=1}^r\binom{m_i}{\ell_i}\\
&=\prod_{i=1}^r\sum_{\ell_i=0}^{m_i}\binom{m_i}{\ell_i}\left(\binom{2r-1}{i}-\binom{2r-1}{i-2}\right)^{m_i-\ell_i} \left(\binom{2r-1}{i-1}-\binom{2r-1}{i-3}\right)^{\ell_i}.
\end{align*}
 Now the result follows from the easy observations that 
 $$\binom{2r-1}{i}+\binom{2r-1}{i-1}=\binom{2r}{i}\qquad\textup{and}\qquad \binom{2r-1}{i-2}+\binom{2r-1}{i-3}=\binom{2r}{i-2}.$$
\end{proof}
The proof of the following result is similar to that of \cite[Corollary 2.1.2]{CL}.
\begin{proposition}\label{dimgreater}
For $\lambda\in P^+$, we have $\dim W(\lambda)\geq|\mathcal{B}^r(\lambda)|$. 
\end{proposition}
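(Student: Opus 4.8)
The plan is to run the fusion-product argument behind \cite[Corollary 2.1.2]{CL}: for each $\lambda\in P^{+}$ I would produce a graded $\lieg[t]$--module that is a quotient of $W(\lambda)$ and has dimension exactly $|\mathcal{B}^{r}(\lambda)|$. Write $\lambda=\sum_{i=1}^{r}m_{i}\omega_{i}$, set $N=\sum_{i}m_{i}$, and list the fundamental summands of $\lambda$ as $\omega_{i_{1}},\dots,\omega_{i_{N}}$, so that exactly $m_{i}$ of the indices $i_{k}$ equal $i$. Choose pairwise distinct nonzero scalars $z_{1},\dots,z_{N}\in\complex$, let $\mathrm{ev}_{z}\colon\lieg[t]\to\lieg$, $x\otimes t^{s}\mapsto z^{s}x$, denote evaluation at $z$, and form the tensor product of evaluation modules $M=\mathrm{ev}_{z_{1}}^{*}V(\omega_{i_{1}})\otimes\cdots\otimes\mathrm{ev}_{z_{N}}^{*}V(\omega_{i_{N}})$, together with the weight vector $v=v_{\omega_{i_{1}}}\otimes\cdots\otimes v_{\omega_{i_{N}}}$ of weight $\lambda$. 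Since the $z_{k}$ are distinct, $M$ is generated by $v$ as a $\lieg[t]$--module (a standard property of tensor products of evaluation modules at distinct points). I would then filter $M$ by $F_{s}=\mathbf{U}(\lieg[t])_{\leq s}\cdot v$, where $\mathbf{U}(\lieg[t])_{\leq s}$ is spanned by the elements of grade at most $s$, and pass to the associated graded module $\operatorname{gr}M=\bigoplus_{s\geq 0}F_{s}/F_{s-1}$ (the fusion product of the $\mathrm{ev}_{z_{k}}^{*}V(\omega_{i_{k}})$); it is a graded $\lieg[t]$--module, generated in grade $0$ by the image $\bar v$ of $v$, and $\dim\operatorname{gr}M=\dim M=\prod_{i=1}^{r}\bigl(\dim V(\omega_{i})\bigr)^{m_{i}}$.

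Next I would check that $\operatorname{gr}M$ is a quotient of $W(\lambda)$. Since $\operatorname{gr}M$ is finite-dimensional and generated by $\bar v$, it suffices by Theorem~\ref{theorem_uni1} to verify that $\bar v$ satisfies the relations \eqref{weylthmreln1}. Each tensor factor of $v$ is a $\lieg$--highest weight vector, so $\mathfrak{n}^{+}[t]\,v=0$, whence $\mathfrak{n}^{+}[t]\,\bar v=0$. For $h\in\csa$ one has $(h\otimes t^{s})\,v=\bigl(\sum_{k=1}^{N}z_{k}^{\,s}\langle\omega_{i_{k}},h\rangle\bigr)v$, which equals $\langle\lambda,h\rangle v$ when $s=0$ and lies in $F_{0}\subseteq F_{s-1}$ when $s\geq 1$; hence $(h\otimes t^{s})\,\bar v=\langle\lambda,h\rangle\delta_{s,0}\,\bar v$. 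Therefore $\operatorname{gr}M$ is a quotient of $W(\lambda)$, and consequently
\[ \dim W(\lambda)\ \geq\ \dim\operatorname{gr}M\ =\ \prod_{i=1}^{r}\bigl(\dim V(\omega_{i})\bigr)^{m_{i}}. \]

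It then remains only to identify the right-hand side with $|\mathcal{B}^{r}(\lambda)|$. Using the classical dimension formula $\dim V(\omega_{i})=\binom{2r}{i}-\binom{2r}{i-2}$ for the fundamental representations of $\mathfrak{sp}_{2r}$ (equivalently $\dim W(\omega_{i})$, since the fundamental local Weyl modules are irreducible), Proposition~\ref{prop_count}\eqref{prop_card2} gives $|\mathcal{B}^{r}(\lambda)|=\prod_{i=1}^{r}\bigl(\dim V(\omega_{i})\bigr)^{m_{i}}$, which is exactly the lower bound just obtained. This completes the proof.

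I do not expect a serious obstacle. The one nontrivial external ingredient is the cyclicity of a tensor product of evaluation modules at pairwise distinct points, which is what yields $\dim\operatorname{gr}M=\dim M$. The step that most needs care is the verification of \eqref{weylthmreln1} for $\bar v$: the subtlety is that $h\otimes t^{s}$ has positive grade, so it kills the grade-$0$ vector $\bar v$ even though it acts on $v$ by the (generally nonzero) scalar $\sum_{k}z_{k}^{\,s}\langle\omega_{i_{k}},h\rangle$, and it is precisely this passage to the associated graded that converts the tensor product into a genuine quotient of $W(\lambda)$. Finally, the remaining defining relation $(x_{i,i}^{-})^{m_{i}+1}v=0$ holds because $v$ is annihilated by $x_{i,i}^{+}$ and has $\alpha_{i,i}^{\vee}$--weight $m_{i}$ (so $\mathfrak{sl}_{2}$--representation theory applied inside the finite-dimensional module $M$ gives it), and in any event it is automatic from Theorem~\ref{theorem_uni1}.
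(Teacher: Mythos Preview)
Your proposal is correct and is precisely the fusion-product argument of \cite[Corollary~2.1.2]{CL} to which the paper defers for its proof. The only additional ingredient beyond \cite{CL} is the identification $\dim V(\omega_i)=\binom{2r}{i}-\binom{2r}{i-2}$, which together with Proposition~\ref{prop_count}\eqref{prop_card2} matches the lower bound with $|\mathcal{B}^{r}(\lambda)|$; this is exactly what the paper intends.
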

The following theorem is the main result of this paper.
\begin{theorem}\label{weylbasis}
Let $\lambda\in P^+.$ The set $\{b\,w_\lambda:b\in \mathcal{B}^r(\lambda)\}$ is a basis of $W(\lambda)$.
\end{theorem}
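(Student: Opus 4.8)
The plan is to reduce the statement to a spanning assertion and then invoke Proposition~\ref{dimgreater}: once we know that $\{b\,w_\lambda:b\in\mathcal{B}^r(\lambda)\}$ spans $W(\lambda)$, it is a spanning set of cardinality at most $\dim W(\lambda)$, hence forced to be a basis (and then $\dim W(\lambda)=|\mathcal{B}^r(\lambda)|$ follows). So the entire content lies in the spanning, which I would prove by a simultaneous induction on the rank $r$ of the following two statements. First, $(\mathrm A_r)$: for every $\lambda\in P^+$, the set $\{v_{\pop^r}\,w_\lambda:\pop^r\in\popset^r_\lambda\}$ spans the local Weyl module $W(\lambda)$ of $\mathfrak{sp}_{2r}[t]$. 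Second, $(\mathrm B_r)$: for every $\eta\in P^+$, the set $\{v_{\pop^{r-1/2}}\,\pv_\eta:\pop^{r-1/2}\in\popset^{r-1/2}_\eta\}$ spans $W^{r-1/2}(\eta)$, where $W^{r-1/2}(\eta)$ denotes the cyclic $\mathfrak{sp}_{2r-1}[t]$--module generated by a vector $\pv_\eta$ subject to the relations defining Shtepin's module $\pL(\eta)$, but with $\csa_{r-1}$ and $\mathfrak{n}^+_{r-1/2}$ replaced throughout by $\csa_{r-1}[t]$ and $\mathfrak{n}^+_{r-1/2}[t]$. The base case $r=1$ of $(\mathrm A_r)$ is the $\mathfrak{sl}_2[t]$ statement already contained in \cite{CL}, and the induction then runs $(\mathrm A_1)\Rightarrow(\mathrm B_2)\Rightarrow(\mathrm A_2)\Rightarrow(\mathrm B_3)\Rightarrow\cdots$.

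For the implication $(\mathrm B_r)\Rightarrow(\mathrm A_r)$ I would argue as in \cite{CL}. Write $\mathfrak{n}^-[t]=\mathfrak{n}^-_{r-1/2}[t]\oplus\mathfrak{m}_r[t]$, where $\mathfrak{m}_r=\bigoplus_{i=1}^r\complex x^-_{i,\overline{r}}$ is an abelian subalgebra; by PBW, $W(\lambda)=\mathbf{U}(\mathfrak{n}^-_{r-1/2}[t])\,\mathbf{U}(\mathfrak{m}_r[t])\,w_\lambda$. The first step is to show that $\mathbf{U}(\mathfrak{m}_r[t])\,w_\lambda$ is spanned by $\{\bx^-_{\overline{r}}(\ul,\us)\,w_\lambda:(\ul,\us)\in\bF^r(\lambda)\}$; this is the Garland/Chari--Pressley type argument, in which one propagates from the defining relation $(x^-_{r,r})^{m_r+1}w_\lambda=0$, the relations $\mathfrak{n}^+[t]\,w_\lambda=0$, and the action of the lowering operators of $\mathfrak{sp}_{2r-1}$ to deduce, for each $i\le r$, the monomial relations among the $x^-_{i,\overline{r}}\otimes t^k$ that bound the exponent by $m_i$ and impose the ``fitting into a rectangle'' conditions encoded in $\bF(m_i)$. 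The second step is to construct a filtration of $W(\lambda)$ by graded $\mathfrak{sp}_{2r-1}[t]$--submodules, indexed by $\bF^r(\lambda)$ and ordered by a partial order refining the $\mathbb{N}$--grading, such that the successive quotient attached to $(\ul,\us)$ is cyclic on the image of $\bx^-_{\overline{r}}(\ul,\us)\,w_\lambda$, and, modulo the larger submodules, this generator satisfies all the defining relations of $W^{r-1/2}(\lseq-\ul)$ --- so by the universal property it is a quotient of $W^{r-1/2}(\lseq-\ul)$, hence spanned by the image of $\mathcal{B}^{r-1/2}(\lseq-\ul)$ by $(\mathrm B_r)$. Feeding this into the decomposition \eqref{eq_basis1} yields $(\mathrm A_r)$. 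The implication $(\mathrm A_{r-1})\Rightarrow(\mathrm B_r)$ is entirely parallel: one splits off instead the complement $\bigoplus_{i=1}^{r-1}\complex x^-_{i,r-1}$ of $\mathfrak{sp}_{2r-2}[t]$ inside $\mathfrak{sp}_{2r-1}[t]$, uses the branching $\eta_i\ge\nu_i\ge\eta_{i+1}$ of Theorem~\ref{theoremofS}(2) to recognize the successive quotients as quotients of rank-$(r-1)$ local Weyl modules, and concludes via \eqref{eq_basis2}.

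I expect the main obstacle to be the construction and verification of these $\mathfrak{sp}_{2r-1}[t]$-- (respectively $\mathfrak{sp}_{2r-2}[t]$--) stable filtrations. One must identify the correct partial order on $\bF^r(\lambda)$ and then check that, modulo the span of the larger vectors, each $\bx^-_{\overline{r}}(\ul,\us)\,w_\lambda$ is annihilated by $\mathfrak{n}^+_{r-1/2}[t]$, has the expected $\csa_{r-1}[t]$--weight, and --- the delicate point --- satisfies Shtepin's additional relation $(x^-_{r-1,\overline{r-1}})^{\eta_{r-1}+1}$ killing the vector, where $\eta=\lseq-\ul$. This requires a careful tracking of the commutators among $\mathfrak{n}^+_{r-1/2}[t]$, $\mathfrak{m}_r[t]$, and the defining relations of $W(\lambda)$, exactly in the spirit of the corresponding analysis in \cite{CL}. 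Note that no independent lower bound on the sizes of the successive quotients is needed: Proposition~\ref{dimgreater} already gives $\dim W(\lambda)\ge|\mathcal{B}^r(\lambda)|$, while the identity $\sum_{(\ul,\us)\in\bF^r(\lambda)}|\mathcal{B}^{r-1/2}(\lseq-\ul)|=|\mathcal{B}^r(\lambda)|$ used in the proof of Proposition~\ref{prop_count} forces every inequality above to be an equality, so the spanning set is in fact a basis.
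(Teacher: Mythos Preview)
Your overall strategy is the paper's: prove spanning by induction on the rank via a filtration whose successive quotients are governed by smaller-rank Weyl modules, then invoke Proposition~\ref{dimgreater} for equality.  The difference is organizational.  You propose a genuine two-stage induction $(\mathrm A_{r-1})\Rightarrow(\mathrm B_r)\Rightarrow(\mathrm A_r)$, introducing an intermediate ``local Weyl module'' $W^{r-1/2}(\eta)$ for the non-reductive current algebra $\mathfrak{sp}_{2r-1}[t]$ and filtering first by $\mathfrak{sp}_{2r-1}[t]$--modules, then by $\mathfrak{sp}_{2r-2}[t]$--modules.  The paper instead collapses both steps into a single filtration of $W(\lambda)$ by $\mathfrak{g}_{r-1}[t]=\mathfrak{sp}_{2r-2}[t]$--modules, indexed by the totally ordered set $\bi^{r-1}\times\bi^r$ (Proposition~\ref{assgraded}); the odd symplectic algebra appears only through the auxiliary subspaces $\bu(\lien^-_{r-1/2}[t])$ in the technical Lemmas~\ref{lem1act}--\ref{lem_main} and Proposition~\ref{mainprop}, never as a module category.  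The advantage of the paper's route is that it sidesteps entirely the need to define $W^{r-1/2}(\eta)$ and, in particular, to verify the current-algebra analogue of Shtepin's extra relation $(x^-_{r-1,\overline{r-1}})^{n_{r-1}+n_r+1}$ on the successive generators---the point you yourself flag as ``delicate''.  Your route is conceptually cleaner (two symmetric branching steps mirroring Theorem~\ref{theoremofS}), but buys that clarity at the cost of developing module theory for an algebra where the standard Weyl-module machinery does not apply.

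One technical caution: your filtration should be indexed not by $\bF^r(\lambda)$ but by $\bi^r=\bn^r\times\bn^r$ (pairs $(\ul,|\us|)$), with the specific total order of \S2.3 of \cite{CL} (lex on $\ul$, reverse lex on the degree vector).  Several $(\ul,\us)$ share the same $(\ul,|\us|)$, and the commutator computations (your analogues of Lemmas~\ref{lem1act}--\ref{lem2act}) only move you to strictly larger $(\ul,|\us|)$, not to a different element of $\bF^r(\lambda)$; getting this indexing right is what makes Proposition~\ref{act2} go through.
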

This theorem is proved in \S\ref{sec_mtproof}. We now deduce some corollaries.
\subsubsection{}
First, we give a fermionic formula for the character of $W(\lambda)$.

Given integers $s, n\in\bn $, set
$$[n]_q=\frac {1-q^n}{1-q}, \qquad [n]_q! = [1]_q \cdots
[n]_q, \qquad \bin{n}{s}_q = \frac{[n]_q!}{[s]_q! [n-s]_q!}, \ \ s\leq n, \qquad\bin{n}{s}_q=1, \ \ s\geq n.$$
All these elements are polynomials in $q$ with non-negative integer 
coefficients. 

\begin{corollary}\label{fermionic_char}
For $\lambda=\sum_{i=1}^rm_i\omega_i=\sum_{i=1}^r \lambda_i\,\varepsilon_i\in P^+$, we have
\begin{align*}
 \textup{ch}_q W(\lambda)
=&\sum_{\pop\in\popset^r_\lambda}e^{\textup{weight}(\pop)} q^{|\pop|}\\
=&\sum_{{(\ell_{i,j})\in\bn^{r(r-1)/2}}\atop{1\le i\le j< r}}\sum_{{(\ell_{i,\overline{j}})\in\bn^{r(r+1)/2}}\atop{1\le i\le j\le r}}
e^{\left(\lambda-\sum\limits_{1\leq i\leq j<r}\ell_{i,j}\,\alpha_{i,j}-\sum\limits_{1\leq i\leq j\leq r} \ell_{i,\overline{j}}\,\alpha_{i,\overline{j}}\right)}\\
&\prod_{1\le i \le j < r} \left[{ m_i +
\sum\limits_{k=j+1}^{r-1} (\ell_{i+1,k}-\ell_{i,k}) +\sum\limits_{k=j+1}^r
(\ell_{i+1,\overline{k}}-\ell_{i,{\overline{k}}})\atop{\ell_{i,j}}} \right]_q\\
&\prod_{1\le i < j \le r} \left[{ m_i +
\sum\limits_{k=j}^{r-1} (\ell_{i+1,k}-\ell_{i,k}) +\sum\limits_{k=j+1}^r
(\ell_{i+1,\overline{k}}-\ell_{i,{\overline{k}}})\atop{\ell_{i,\overline{j}}}} \right]_q
\prod_{i=1}^{r} \left[{ \lambda_i -
\sum\limits_{k=i}^{r-1} \ell_{i,k} -\sum\limits_{k=i+1}^r
\ell_{i,{\overline{k}}}\atop{\ell_{i,\overline{i}}}} \right]_q
\end{align*}
\end{corollary}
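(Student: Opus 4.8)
The plan is to obtain the first equality directly from Theorem~\ref{weylbasis} and Lemma~\ref{lem_weight}, and then to convert the resulting sum over POPs into the fermionic product by a self-contained combinatorial computation; the only non-elementary input is Theorem~\ref{weylbasis}.

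By Theorem~\ref{weylbasis}, $W(\lambda)$ has a basis consisting of the vectors $v_{\pop^r}\,w_\lambda$, $\pop^r\in\popset^r_\lambda$. Each such vector is a weight vector of weight $\textup{weight}(\pop^r)$ by Lemma~\ref{lem_weight}, and is homogeneous for the $\bn$-grading on $W(\lambda)$: each building block $\bx^-_{i,\overline{j}}(\ell_{i,\overline{j}},\bos_{i,\overline{j}})$ of $v_{\pop^r}$ is a product of elements $x^-_{i,\overline{j}}\otimes t^{\bos_{i,\overline{j}}(p)}$ and so is homogeneous of degree $|\bos_{i,\overline{j}}|$, and likewise $\bx^-_{i,j}(\ell_{i,j},\bos_{i,j})$ is homogeneous of degree $|\bos_{i,j}|$; since $w_\lambda$ lies in grade $0$, the grade of $v_{\pop^r}\,w_\lambda$ is $\sum_{1\le i\le j\le r}|\bos_{i,\overline{j}}|+\sum_{1\le i\le j<r}|\bos_{i,j}|=|\pop^r|$. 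Summing $e^{\textup{weight}}\,q^{\textup{grade}}$ over this basis gives the first equality $\textup{ch}_q W(\lambda)=\sum_{\pop\in\popset^r_\lambda}e^{\textup{weight}(\pop)}q^{|\pop|}$.

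For the second equality I would split the sum over $\popset^r_\lambda$ as a sum over the underlying pattern $\pattern^r$, with bounding sequence $\lseq$, followed by a sum over the overlays of $\pattern^r$. For fixed $\pattern^r$, an overlay is an independent choice, for each pair in the relevant range, of a partition fitting into $(\ell_{i,j},\ell'_{i,j})$, respectively $(\ell_{i,\overline{j}},\ell'_{i,\overline{j}})$; since the generating function by size of the partitions fitting into a rectangle $(\ell,\ell')$ is the Gaussian binomial $\bin{\ell+\ell'}{\ell}_q$ (the $q$-analogue of the count $\binom{\ell+\ell'}{\ell}$), summing $q^{|\pop|}$ over the overlays of $\pattern^r$ yields $\prod_{1\le i\le j<r}\bin{\ell_{i,j}+\ell'_{i,j}}{\ell_{i,j}}_q\,\prod_{1\le i\le j\le r}\bin{\ell_{i,\overline{j}}+\ell'_{i,\overline{j}}}{\ell_{i,\overline{j}}}_q$. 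I would then re-parametrize the patterns with bounding sequence $\lseq$ by the differences $(\ell_{i,j})_{1\le i\le j<r}$ and $(\ell_{i,\overline{j}})_{1\le i\le j\le r}$ alone: starting from the bottom row $\lambda^r_i=\lambda_i$ and moving upward one recovers $\eta^{j+1}_i=\lambda_i-\sum_{k=j+1}^{r}\ell_{i,\overline{k}}-\sum_{k=j+1}^{r-1}\ell_{i,k}$ and $\lambda^j_i=\lambda_i-\sum_{k=j+1}^{r}\ell_{i,\overline{k}}-\sum_{k=j}^{r-1}\ell_{i,k}$ (with $\lambda^j_{j+1}=0$), and the pattern inequalities translate precisely into the non-negativity of $\ell'_{i,j}=\eta^{j+1}_i-\ell_{i,j}-\eta^{j+1}_{i+1}$ and of $\ell'_{i,\overline{j}}=\lambda^j_i-\ell_{i,\overline{j}}-\lambda^j_{i+1}$; the sum may thus be taken over all tuples in $\bn^{r(r-1)/2}\times\bn^{r(r+1)/2}$, the extra terms involving a $q$-binomial coefficient for a rectangle of negative width and hence vanishing. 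Finally, using $\ell_{i,\overline{j}}+\ell'_{i,\overline{j}}=\lambda^j_i-\lambda^j_{i+1}$ and $\ell_{i,j}+\ell'_{i,j}=\eta^{j+1}_i-\eta^{j+1}_{i+1}$, substituting the above expressions and $\lambda_i-\lambda_{i+1}=m_i$, the two products become the three products in the statement---the $i=j$ part of the $\overline{j}$-product becoming $\bin{\lambda_i-\sum_{k=i}^{r-1}\ell_{i,k}-\sum_{k=i+1}^{r}\ell_{i,\overline{k}}}{\ell_{i,\overline{i}}}_q$ because $\lambda^i_{i+1}=0$---while Lemma~\ref{lem_weight} supplies the exponent $\lambda-\sum_{1\le i\le j<r}\ell_{i,j}\alpha_{i,j}-\sum_{1\le i\le j\le r}\ell_{i,\overline{j}}\alpha_{i,\overline{j}}$ of $e$.

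The argument reduces to bookkeeping once Theorem~\ref{weylbasis} is in hand, and that bookkeeping is the only obstacle: one must check that $\pattern^r\mapsto\big((\ell_{i,j}),(\ell_{i,\overline{j}})\big)$ is a bijection onto the tuples whose derived $\ell'$'s are all non-negative, telescope the pattern entries without sign or index errors, and correctly handle the boundary cases $i=j$ (where $\lambda^j_{j+1}=0$) and $j=r$ (where $\ell_{i,j},\ell'_{i,j}$ are absent), as well as the degenerate rectangles produced by tuples outside the image of the bijection. I expect no conceptual difficulty beyond this.
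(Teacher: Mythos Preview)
Your proposal is correct and follows the same approach as the paper: the paper's proof is the single sentence ``The proof follows from Theorem~\ref{weylbasis}, Lemma~\ref{lem_weight}, and the observation that for a fixed pair of integers $\ell, m\in\bn$ we have $\sum_{0\le\bos(1)\le\cdots\le\bos(\ell)\le m-\ell} q^{\bos(1)+\cdots+\bos(\ell)}=\bin{m}{\ell}_q$,'' and what you have written is precisely the bookkeeping (telescoping the pattern entries via the differences, handling the boundary $\lambda^j_{j+1}=0$, and extending the sum to all of $\bn^{r(r-1)/2}\times\bn^{r(r+1)/2}$) that this sentence leaves to the reader.
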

\begin{proof}
The proof follows from Theorem \ref{weylbasis}, Lemma \ref{lem_weight}, and the observation that for a fixed pair of integers  $\ell, m \in \bn$  we
have $\sum_{0\le\bos(1)\leq\cdots\leq\bos(\ell)\leq m-\ell} q^{\bos(1) + \cdots +\bos(\ell)} = \bin{m}{\ell}_q.$
\end{proof}

The following  result is immediate from Theorem~\ref{weylbasis} and the observation that the zeroth graded piece of $W(\lambda)$ is isomorphic to $V(\lambda)$.
\begin{corollary}\label{irrbasis}
For $\lambda\in P^+$,  let $v_\lambda$ denote a highest weight vector of $V(\lambda)$. 
The elements $$(x^-_{1,\overline{1}})^{\ell_{1,\overline{1}}}\,(x^-_{1,1})^{\ell_{1,1}}\,(x^-_{1,\overline{2}})^{\ell_{1,\overline{2}}}\,(x^-_{2,\overline{2}})^{\ell_{2,\overline{2}}}\,(x^-_{1,2})^{\ell_{1,2}}\,(x^-_{2,2})^{\ell_{2,2}}\cdots(x^-_{r,\overline{r}})^{\ell_{r,\overline{r}}}\,v_\lambda,$$ where $\ell_{i,\overline{j}},\,\ell_{i,j}\in\bN$ are  the differences corresponding to a pattern with bounding sequence $\lseq$, are a basis for $V(\lambda)$.
\end{corollary}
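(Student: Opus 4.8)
The plan is to read the statement off from Theorem~\ref{weylbasis} by restricting the basis of $W(\lambda)$ to its zeroth graded piece and then transporting it across the identification $W(\lambda)[0]\cong V(\lambda)$. Recall that $W(\lambda)$ is a graded $\lieg[t]$--module, that $w_\lambda$ has grade $0$, and that its zeroth graded piece is isomorphic as a $\lieg$--module to $V(\lambda)$, with $w_\lambda$ corresponding to $v_\lambda$. Since the basis $\{v_{\pop^r}\,w_\lambda:\pop^r\in\popset^r_\lambda\}$ produced by Theorem~\ref{weylbasis} consists of \emph{homogeneous} vectors, it suffices to single out the basis elements of grade $0$ and carry them over to $V(\lambda)$.

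First I would record that $v_{\pop^r}$ is homogeneous of grade $|\pop^r|$: each factor $\bx^-_{i,\overline{j}}(\ell_{i,\overline{j}},\bos_{i,\overline{j}})$ is a product of elements $x^-_{i,\overline{j}}\otimes t^{\bos(m)}$, hence has grade $|\bos_{i,\overline{j}}|$, and summing over all factors gives grade $\sum|\bos_{i,\overline{j}}|+\sum|\bos_{i,j}|=|\pop^r|$; multiplying by $w_\lambda$ (grade $0$) does not change the grade. A basis of a graded vector space consisting of homogeneous vectors restricts to a basis of each graded component, so $\{v_{\pop^r}\,w_\lambda:\pop^r\in\popset^r_\lambda,\ |\pop^r|=0\}$ is a basis of $W(\lambda)[0]$. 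Next I would identify the grade-$0$ POPs: one has $|\pop^r|=0$ precisely when every constituent partition is the zero partition (all parts equal to $0$), and since for each underlying pattern $\pattern^r$ the all-zero partition fitting into a given rectangle is unique, the grade-$0$ POPs with bounding sequence $\lseq$ are in bijection with the patterns $\pattern^r$ with bounding sequence $\lseq$. For such an empty-overlay POP every factor collapses, $\bx^-_{i,\overline{j}}(\ell_{i,\overline{j}},(0,\ldots,0))=(x^-_{i,\overline{j}}\otimes t^0)^{\ell_{i,\overline{j}}}=(x^-_{i,\overline{j}})^{\ell_{i,\overline{j}}}$ and likewise for the $\bx^-_{i,j}$, so $v_{\pop^r}$ becomes exactly the ordered product of powers of Chevalley generators displayed in the statement, with exponents the differences $\ell_{i,\overline{j}},\ell_{i,j}$ of $\pattern^r$.

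Finally, transporting through the $\lieg$--module isomorphism $W(\lambda)[0]\cong V(\lambda)$, $w_\lambda\mapsto v_\lambda$, which intertwines the action of $\bu(\lien^-)$, the basis of $W(\lambda)[0]$ maps onto the elements $(x^-_{1,\overline{1}})^{\ell_{1,\overline{1}}}\cdots(x^-_{r,\overline{r}})^{\ell_{r,\overline{r}}}\,v_\lambda$, which are therefore a basis of $V(\lambda)$ indexed by the patterns with bounding sequence $\lseq$. I expect no genuine obstacle beyond bookkeeping: the only points needing care are the general fact that a homogeneous basis restricts to each graded piece, and the verification that the empty-overlay specialization of $v_{\pop^r}$ reproduces the stated monomial in the prescribed order, both of which are routine once the grade $|\pop^r|$ of $v_{\pop^r}$ is made explicit.
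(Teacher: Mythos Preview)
Your proof is correct and follows exactly the approach the paper intends: the paper simply states that the result is immediate from Theorem~\ref{weylbasis} together with the observation that the zeroth graded piece of $W(\lambda)$ is isomorphic to $V(\lambda)$, and you have spelled out precisely this argument in detail. The only content beyond the paper's one-line justification is your careful bookkeeping that $v_{\pop^r}$ has grade $|\pop^r|$, that the grade-$0$ POPs are exactly the empty-overlay ones, and that the empty-overlay specialization reproduces the displayed monomial in the stated order---all of which is routine and correct.
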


The next result follows from  Theorem~\ref{weylbasis} by using Proposition~\ref{prop_count}.
\begin{corollary}\label{dimformula}
For $\lambda=m_1\omega_1+\cdots+m_r\omega_r\in P^+$, we have
$$\dim W(\lambda)=\prod_{i=1}^r \left(\dim W(\omega_i)\right)^{m_i}.$$
\end{corollary}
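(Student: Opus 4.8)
The plan is to deduce this directly from the combinatorial description of a basis established in Theorem~\ref{weylbasis}, together with the cardinality count in Proposition~\ref{prop_count}. By Theorem~\ref{weylbasis}, the map $b\mapsto b\,w_\lambda$ sends $\mathcal{B}^r(\lambda)$ onto a basis of $W(\lambda)$; since it is injective (for instance by Proposition~\ref{dimgreater} the vectors $b\,w_\lambda$ are already known to be linearly independent), we get $\dim W(\lambda)=|\mathcal{B}^r(\lambda)|$. Thus the whole statement is reduced to an identity among the cardinalities $|\mathcal{B}^r(\cdot)|$.

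Next I would apply Proposition~\ref{prop_count}\eqref{prop_card2} twice. First, for $\lambda=\sum_{i=1}^r m_i\omega_i$ it gives
$$\dim W(\lambda)=|\mathcal{B}^r(\lambda)|=\prod_{i=1}^r\left(\binom{2r}{i}-\binom{2r}{i-2}\right)^{m_i}.$$
Second, specializing to the fundamental weight $\lambda=\omega_i$ (the tuple with $m_i=1$ and $m_j=0$ for $j\neq i$), the same proposition yields $\dim W(\omega_i)=|\mathcal{B}^r(\omega_i)|=\binom{2r}{i}-\binom{2r}{i-2}$. Substituting this second identity into the first gives $\dim W(\lambda)=\prod_{i=1}^r\left(\dim W(\omega_i)\right)^{m_i}$, which is the claim.

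As for obstacles, there really is none at this stage: both ingredients are already in hand, and the only point worth noting is that the right-hand side of Proposition~\ref{prop_count}\eqref{prop_card2} is a product over $i$ of a quantity depending only on $i$ and $r$, raised to the exponent $m_i$ — and it is precisely this shape that forces multiplicativity over fundamental weights. (Alternatively, since $W(\omega_i)$ is irreducible over $\mathfrak{sp}_{2r}$, as recalled in the introduction, one could identify $\dim W(\omega_i)$ with $\binom{2r}{i}-\binom{2r}{i-2}$ via the Weyl dimension formula for the $i$-th fundamental representation of type $C$; but routing everything through Proposition~\ref{prop_count} keeps the proof self-contained.)
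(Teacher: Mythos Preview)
Your proposal is correct and follows exactly the route the paper indicates: the paper states that the corollary ``follows from Theorem~\ref{weylbasis} by using Proposition~\ref{prop_count},'' which is precisely your argument of reading off $\dim W(\lambda)=|\mathcal{B}^r(\lambda)|$ from the basis theorem and then specializing the product formula in Proposition~\ref{prop_count}\eqref{prop_card2} to the fundamental weights. One small remark: your parenthetical justification of injectivity via Proposition~\ref{dimgreater} is slightly roundabout, since the proof of Theorem~\ref{weylbasis} already establishes $\dim W(\lambda)=|\mathcal{B}^r(\lambda)|$ directly; but the logic is sound either way.
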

\begin{remark}
The above result is proved in \cite{N}. The methods they use are quite different from ours.
\end{remark}

\subsection{}
 Let $\bi^m=\bn^m\times \bn^m$. We recall from \cite[\S 2.2.2]{CL}  the total  order on
$\bi^m = \{(\ul, \ud)\}$:
\begin{eqnarray*}
\ul \eql \um & \mbox{if} & \ell_1 = m_1,\dots, \ell_{s-1} =
m_{s-1},\ \ell_s < m_s,\\ \ud \eqd \ue & \mbox{if} & d_m=
e_m,\dots, d_{s+1} = e_{s+1},\ d_s
> e_s\end{eqnarray*} for some $1\le s\le m$. Finally $$
(\ul,\ud)> (\um, \ue) \ \ \mbox{if} \ \  \ul \eql \um \ \
\mbox{or} \  \  \ul = \um, \ \  \ud \eqd \ue.$$
We now define a total order on $\bi^{m-1}\times\bi^{m}$:
$$
(\ul',\ud',\ul,\ud)> (\um',\ue',\um, \ue) \ \ \mbox{if} \ \  (\ul,\ud) > (\um,\ue) \ \
\mbox{or} \  \ (\ul,\ud) = (\um,\ue), \ \ (\ul',\ud')>(\um',\ue') .$$
For $(\ell, \bos) \in \bF$, set
$|\bos|=\sum_{p=1}^{\ell}\bos(p)$, if $\ell>0$ and
$|\emptyset|=0$.
Given $(\ul, \us) \in \bF^m$ let $|\us| = (|\bos_1|, \dots,
|\bos_m|)$. Note that $(\ul,|\us|)\in\bi^m$.

\medskip
 Given $(\boi', \boi)\in\bi^{r-1}\times \bi^{r}$ and $\lambda\in P^+$, define $\lieg_{r-1}[t]$--modules,
\begin{eqnarray*}&{W}(\lambda)^{\ge (\boi', \boi)}
&=\sum_{\substack{{(\ul',\,\us',\,\ul,\,\us)\in\bF^{r-1}\times\bF^{r}}\\ {(\ul',\,|\us'|,\,\ul,\,|\us|)\ge (\boi', \boi)}}}\bu(\lieg_{r-1}[t])\,\bxurm^-(\ul',\us')\,\bxur^-(\ul,\us)\,w_\lambda,\\ & W(\lambda)^{> (\boi', \boi)}
&=\sum_{\substack{{(\ul',\,\us',\,\ul,\,\us)\in\bF^{r-1}\times\bF^{r}}\\ {(\ul',\,|\us'|,\,\ul,\,|\us|)> (\boi', \boi)}}}\bu(\lieg_{r-1}[t])\,\bxurm^-(\ul',\us')\,\bxur^-(\ul,\us)\,w_\lambda ,\\ &
\pgr(W(\lambda))&=\bigoplus_{(\boi', \boi)\in\bi^{r-1}\times \bi^{r}}W(\lambda)^{\ge(\boi', \boi)}/W(\lambda)^{>(\boi', \boi)}.\end{eqnarray*}
Let $$\pgr^{(\boi', \boi)}:
W(\lambda)^{\ge(\boi', \boi)}\to W(\lambda)^{\ge(\boi', \boi)}/W(\lambda)^{>(\boi', \boi)}$$
be the canonical projection, which is clearly a map of $\lieg_{r-1}[t]$--modules. 

We shall deduce Theorem~\ref{weylbasis} from the next proposition
\begin{proposition}\label{assgraded}
  Let $\lambda\in P^+$ and $r\ge
2$.
\begin{enumerate}
\item  \label{assgradedp1}For  $(\ul,\us)\in\bF^r(\lambda)$ and $(\ul',\us')\in \bF^{r-1}(\lseq-\ul)$,
 there exists a map of $\lieg_{r-1}[t]$--modules
$$\psi^{\,\ul',\us',\ul,\us}: W({\lseq-\ul-\ul'})\to\pgr(W(\lambda))$$ given
by extending  $$\psi^{\,\ul',\us',\ul,\us}(w_{\lseq-\ul-\ul'})
=\pgr^{(\ul', |\us'|,\ul, |\us|)} (\bxurm^-(\ul',\us')\,\bxur^-(\ul,\us)\,w_\lambda).$$ 
\item \label{assgradedp2} The images of $\psi^{\,\ul',\us',\ul,\us}$  for
 $(\ul,\us)\in\bF^r(\lambda)$  and $(\ul',\us')\in \bF^{r-1}(\lseq-\ul)$
span $\pgr(W(\lambda))$.
\end{enumerate}
\end{proposition}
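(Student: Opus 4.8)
The plan is to treat parts \eqref{assgradedp1} and \eqref{assgradedp2} in sequence, using the structure of the local Weyl module and the decompositions \eqref{eq_basis1}--\eqref{eq_basis2} as the organizing principle. For part \eqref{assgradedp1}, I would first observe that to construct a map of $\lieg_{r-1}[t]$--modules out of $W(\lseq-\ul-\ul')$, it suffices by the universal property (Theorem~\ref{theorem_uni1}) to check that the element $v:=\pgr^{(\ul',|\us'|,\ul,|\us|)}(\bxurm^-(\ul',\us')\,\bxur^-(\ul,\us)\,w_\lambda)$ in $\pgr(W(\lambda))$ satisfies the defining relations \eqref{weylthmreln1} for the weight $\lseq-\ul-\ul'$ (viewed as a dominant weight for $\lieg_{r-1}$) together with the Garland-type relation $(x_{i,i}^-)^{m_i'+1}v=0$ where $m_i'$ are the labels of $\lseq-\ul-\ul'$. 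The weight computation is immediate from Lemma~\ref{lem_weight} (or rather its proof): commuting the Cartan elements $h\otimes t^s$ past $\bxurm^-(\ul',\us')\,\bxur^-(\ul,\us)$ and using $\mathfrak{n}^+[t]w_\lambda=0$, $(h\otimes t^s)w_\lambda=\langle\lambda,h\rangle\delta_{s,0}w_\lambda$ produces exactly the right eigenvalue and kills positive-degree Cartan action modulo lower-order terms in the filtration. The annihilation by $\mathfrak{n}_{r-1}^+[t]$ and the Garland relations require showing that applying a raising operator $x^+_{i,j}\otimes t^s$ (for $1\le i\le j<r-1$, the roots of $\lieg_{r-1}$) to $\bxurm^-(\ul',\us')\,\bxur^-(\ul,\us)\,w_\lambda$ lands, after reordering, in $W(\lambda)^{>(\ul',|\us'|,\ul,|\us|)}$; this is the heart of the argument and is where the total order on $\bi^{r-1}\times\bi^r$ is designed to make the error terms strictly larger. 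I expect this to mimic \cite[\S2]{CL} closely, using commutator identities in $\bu(\lieg[t])$ together with the already-established relations in $W(\lambda)$ to push extra $t$-powers or extra raising operators to the right.

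For part \eqref{assgradedp2}, the strategy is: $\pgr(W(\lambda))$ is spanned by the images $\pgr^{(\boi',\boi)}(x\,w_\lambda)$ as $x$ ranges over a PBW-type spanning set of $\bu(\lieg[t])$; by the triangular decomposition and $\mathfrak{n}^+[t]w_\lambda=0$, $(h\otimes t^s)w_\lambda$ scalar, we may take $x\in\bu(\mathfrak{n}^-[t])$. Ordering the root vectors of $\mathfrak{n}^-$ compatibly with the definition of $v_{\pop^r}$ (first the $\lieg_{r-1}$-part, then the $(\ul',\us')$-layer coming from $\bxurm^-$, then the $(\ul,\us)$-layer coming from $\bxur^-$), any PBW monomial can be rewritten --- modulo terms strictly higher in the total order --- in the form $u\cdot\bxurm^-(\ul',\us')\cdot\bxur^-(\ul,\us)$ with $u\in\bu(\lieg_{r-1}[t])$ and $(\ul,\us)\in\bF^r$, $(\ul',\us')\in\bF^{r-1}$; and by the Garland relations $(x_{i,i}^-)^{m_i+1}w_\lambda=0$ (and their analogues for the monomials in $x_{i,\bar j}^-$, which follow from the $\sltwo[t]$-theory applied to each positive root), the multiplicities in $\bxur^-$ may be assumed to lie in $\bF^r(\lambda)$ and those in $\bxurm^-$ in $\bF^{r-1}(\lseq-\ul)$. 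Then $\pgr^{(\boi',\boi)}(x\,w_\lambda)$ is, by definition of the associated graded, in the image of the corresponding $\psi^{\,\ul',\us',\ul,\us}$ applied to a $\bu(\lieg_{r-1}[t])$-multiple of the cyclic generator. This gives the spanning statement.

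The main obstacle is the reordering/straightening step in both parts: showing that the "error terms'' produced when commuting a raising operator past the fixed monomial, or when rewriting an arbitrary PBW monomial into the canonical layered form, genuinely lie in the strictly-larger filtration piece $W(\lambda)^{>(\boi',\boi)}$. This is a bookkeeping argument combining (i) the $\sltwo[t]$-Garland relations to bound the number of occurrences of each root vector, (ii) explicit structure constants of $\mathfrak{sp}_{2r}$ so that a commutator $[x^+_{i,j},x^-_{k,\bar l}]$ etc.\ is again a (single) root vector or lies in $\csa$, and (iii) a careful check that each such move either strictly decreases an earlier $\ell$-coordinate, or strictly increases a later $d$-coordinate, in the lexicographic-type order on $\bi^{r-1}\times\bi^r$ --- exactly the phenomenon exploited in \cite[\S2.2]{CL} for type $A$, here adapted to the two families $\{\alpha_{i,j}\},\{\alpha_{i,\bar j}\}$ of positive roots of type $C$.
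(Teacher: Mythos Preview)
Your outline for part~\eqref{assgradedp1} is essentially the paper's argument: check that the image vector satisfies the relations~\eqref{weylthmreln1} for the weight $\lseq-\ul-\ul'$ (the paper packages the required commutator computations as Lemmas~\ref{lem1act}--\ref{lem2act} and Proposition~\ref{act2}), then invoke Theorem~\ref{theorem_uni1}. One remark: you do not need to verify the relation $(x^-_{i,i})^{m_i'+1}v=0$ separately, since Theorem~\ref{theorem_uni1} only asks for~\eqref{weylthmreln1} and finite-dimensionality. Also, when you list the raising operators of $\lieg_{r-1}$ as $x^+_{i,j}$ with $1\le i\le j<r-1$, you omit the $x^+_{i,\overline{j}}$ with $1\le i\le j\le r-1$; these are the ones that cause the real work, and Lemma~\ref{lem2act}\eqref{lem2_act2} together with Lemma~\ref{lem1act}\eqref{lem_act3} are what handle them.

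Part~\eqref{assgradedp2} has a genuine gap. You assert that ``by the Garland relations \ldots\ the multiplicities in $\bxur^-$ may be assumed to lie in $\bF^r(\lambda)$.'' But the condition $(\ul,\us)\in\bF^r(\lambda)$ demands $\ell_i\le m_i=\lambda_i-\lambda_{i+1}$ (and a corresponding bound on $\bos_i$). The $\sltwo[t]$-theory applied to the root $\alpha_{i,r}=\varepsilon_i+\varepsilon_r$ (for $i<r$) only gives $\ell_i\le\langle\lambda,\alpha_{i,r}^\vee\rangle=\lambda_i+\lambda_r$, which is much weaker. The correct bound is \emph{not} a root-by-root $\sltwo$ consequence; it reflects the $\lieg_{r-1/2}$-branching of $V(\lambda)$.

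The paper supplies this via a chain of results you have not accounted for. First, at grade zero, Proposition~\ref{prop_fd} shows that $V(\lambda)$ is spanned by $\bu(\lien^-_{r-1/2})(x^-_{1,r})^{k_1}\cdots(x^-_{r,r})^{k_r}v_\lambda$ with each $k_i\le m_i$; its proof uses Shtepin's theorem (Theorem~\ref{theoremofS}) on the $\lieg_{r-1/2}$-filtration of $V(\lambda)$. Second, Proposition~\ref{prop_sl2} rewrites each factor $\bx^-_{i,r}(\ell_i,\bos_i)$ as a combination of terms with $(\ell_i,\bop_i)\in\bF(m_i)$ plus ``overflow'' terms of the form $\bpr\bigl(\bx^+_{k,r}(m-\ell_k,\bop_k)(x^-_{k,r}\otimes 1)^m\bigr)$ with $m>m_k$. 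Third --- and this is the substantive step your bookkeeping plan (i)--(iii) does not cover --- Proposition~\ref{prop_bound1} shows that when such an overflow term is sandwiched inside the full product and applied to $w_\lambda$, the result lies in $\sum_{(\un,|\uq|)>(\ul,|\up|)}\bu(\lien^-_{r-1/2}[t])\bxur^-(\un,\uq)w_\lambda$. The proof of Proposition~\ref{prop_bound1} feeds the grade-zero spanning (Proposition~\ref{prop_fd}) into the current-algebra computation via the projection $\bpr$ and Lemmas~\ref{lem_dif}--\ref{lem_main}. Only after all this does Proposition~\ref{mainprop} (and hence part~\eqref{assgradedp2}) follow. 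Your sketch treats the reduction to $\bF^r(\lambda)$ as a formality, but it is in fact the technical core of the paper and relies essentially on the intermediate subalgebra $\mathfrak{sp}_{2r-1}$.
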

\subsection{}\label{sec_mtproof}{\em Proof of Theorem~\ref{weylbasis} from Proposition~\ref{assgraded}}.
We proceed by induction on $r$. For $r=1$, this holds by the results in \cite[ \S6]{CP}.  Let $r\geq 2$, and  assume the
result for $r-1$. Using Proposition~\ref{assgraded} and the
induction hypothesis we see that $
\pgr(W(\lambda))$ is spanned by
the  sets
$$\pgr^{(\ul', |\us'|,\ul, |\us|)}(\cal{B}^{r-1}(\lseq-\ul-\ul')\,\bxurm^-(\ul',\us')\,\bxur^-(\ul,\us) \,w_\lambda), \quad\textup{where}\quad (\ul,\us)\in\bF^{r}(\lambda), \, (\ul', \us')\in\bF^{r-1}(\lseq-\ul).$$
Since $W(\lambda)$ is finite-dimensional,  we get analogous to the proof of \cite[Proposition~2.2.2]{CL} that  the spanning set of $\pgr(W(\lambda))$ gives the spanning set of $W(\lambda)$.  
Using
equations~\eqref{eq_basis1}-\eqref{eq_basis2}, we get  that
$\cal{B}^{r}(\lambda)\,w_\lambda$ spans $W(\lambda)$ and hence $\dim W(\lambda)\leq|\cal{B}^{r}(\lambda)|$. 
Since Proposition~\ref{dimgreater} gives the reverse inequality, it follows that $\dim W(\lambda)=|\cal{B}^{r}(\lambda)|$. Hence $\cal{B}^{r}(\lambda)\,w_\lambda$ is a basis of $W(\lambda)$ and the theorem is proved.

\subsection{} We conclude this section with a final corollary of
Theorem~\ref{weylbasis} and Proposition~\ref{assgraded}.

\begin{corollary}\label{weylfiltration}
 For $\lambda = \sum_{i=1}^r
m_i \omega_i\in P^+$, we have an isomorphism of $\lieg_{r-1}[t]$--modules
 $$\pgr(W(\lambda)) \cong\bigoplus_{\substack{{(\ul,\,\us)\in\bF^r(\lambda)}\\{(\ul',\,\us')\in\bF^{r-1}(\lseq-\ul)}}}W({\lseq-\ul-\ul'})
 =\bigoplus_{\substack{{\ul\in\bn^r,\, \ul'\in\bn^{r-1}}\\{\ell_i \le m_i}, \,\ell'_i\le m_i-\ell_i+\ell_{i+1}}}
m_{\ul',\,\ul,\,\lambda}\, m_{\ul,\,\lambda}\,W({\lseq-\ul-\ul'}), $$  {where}
$$ m_{\ul,\, \lambda}=\prod_{i=1}^r \binom{m_i}{\ell_i}, \qquad m_{\ul',\, \ul,\, \lambda}=\prod_{i=1}^{r-1} \binom{m_i-\ell_i+\ell_{i+1}}{\ell'_i}.  $$  In particular, the module $W(\lambda)$
admits a filtration by $\lieg_{r-1}[t]$--modules such that the
successive quotients are local Weyl modules for $\lieg_{r-1}[t]$.
\end{corollary}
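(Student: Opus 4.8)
The plan is to derive the corollary from Proposition~\ref{assgraded}, using Theorem~\ref{weylbasis} only to supply a dimension count; we may assume $r\ge 2$. First I would note that $\pgr(W(\lambda))$ is, by construction, the associated graded space of $W(\lambda)$ for the filtration by the $\lieg_{r-1}[t]$--submodules $W(\lambda)^{\ge(\boi',\boi)}$ indexed by the totally ordered set $\bi^{r-1}\times\bi^r$; since it is the associated graded space of a filtration of the finite--dimensional module $W(\lambda)$, we have $\dim\pgr(W(\lambda))=\dim W(\lambda)$. Next, as each $\psi^{\,\ul',\us',\ul,\us}$ is a morphism of $\lieg_{r-1}[t]$--modules by Proposition~\ref{assgraded}\eqref{assgradedp1}, assembling them gives a morphism of $\lieg_{r-1}[t]$--modules
$$\Psi\colon\ \bigoplus_{\substack{(\ul,\us)\in\bF^r(\lambda)\\(\ul',\us')\in\bF^{r-1}(\lseq-\ul)}}W(\lseq-\ul-\ul')\ \longrightarrow\ \pgr(W(\lambda)),$$
which is surjective by Proposition~\ref{assgraded}\eqref{assgradedp2}. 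The heart of the argument is then to show that $\Psi$ is an isomorphism, and my approach is simply to compare dimensions.

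To do this I would use that Theorem~\ref{weylbasis} is available in ranks $r$ and $r-1$ (it is proved for all ranks in \S\ref{sec_mtproof}), so $\dim W(\mu)=|\cal{B}^{r-1}(\mu)|$ for every dominant weight $\mu$ of $\lieg_{r-1}$ and $\dim W(\lambda)=|\cal{B}^r(\lambda)|$. For a fixed $\ul$ there are $m_{\ul,\lambda}=\prod_{i=1}^r\binom{m_i}{\ell_i}$ sequences $\us$ with $(\ul,\us)\in\bF^r(\lambda)$, and for fixed $\ul,\ul'$ there are $m_{\ul',\ul,\lambda}=\prod_{i=1}^{r-1}\binom{m_i-\ell_i+\ell_{i+1}}{\ell'_i}$ sequences $\us'$ with $(\ul',\us')\in\bF^{r-1}(\lseq-\ul)$, because $\bF(m)$ contains exactly $\binom{m}{\ell}$ pairs with first coordinate $\ell$. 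Since $W(\lseq-\ul-\ul')$ depends only on $\ul$ and $\ul'$, the source of $\Psi$ has dimension $\sum_{\ul,\ul'}m_{\ul,\lambda}\,m_{\ul',\ul,\lambda}\,|\cal{B}^{r-1}(\lseq-\ul-\ul')|$; on the other hand \eqref{eq_basis1} followed by \eqref{eq_basis2} gives $|\cal{B}^r(\lambda)|=\sum_{\ul}m_{\ul,\lambda}\,|\cal{B}^{r-1/2}(\lseq-\ul)|=\sum_{\ul,\ul'}m_{\ul,\lambda}\,m_{\ul',\ul,\lambda}\,|\cal{B}^{r-1}(\lseq-\ul-\ul')|$, the same number. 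Hence the source of $\Psi$, $W(\lambda)$, and $\pgr(W(\lambda))$ all have dimension $|\cal{B}^r(\lambda)|$, so the surjection $\Psi$ between finite--dimensional spaces of equal dimension is an isomorphism of $\lieg_{r-1}[t]$--modules. This yields the first isomorphism of the corollary; the displayed equality follows by collapsing the choices of $\us,\us'$ into the multiplicities $m_{\ul,\lambda},m_{\ul',\ul,\lambda}$, and in particular each $\psi^{\,\ul',\us',\ul,\us}$ is injective with image a $\lieg_{r-1}[t]$--submodule isomorphic to $W(\lseq-\ul-\ul')$, while $\pgr(W(\lambda))$ is the internal direct sum of these images.

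Finally, for the filtration statement I would list the distinct terms among the $W(\lambda)^{\ge(\boi',\boi)}$ as a chain of $\lieg_{r-1}[t]$--submodules $W(\lambda)=F_0\supsetneq F_1\supsetneq\cdots\supsetneq F_N=0$, whose successive quotients are the spaces $W(\lambda)^{\ge(\boi',\boi)}/W(\lambda)^{>(\boi',\boi)}$. By the previous paragraph each such quotient is the finite direct sum of the local Weyl modules $W(\lseq-\ul-\ul')$ for $\lieg_{r-1}[t]$ indexed by those $(\ul,\us),(\ul',\us')$ with $(\ul',|\us'|,\ul,|\us|)=(\boi',\boi)$; refining the chain by inserting inside each $F_k/F_{k+1}$ the partial sums of its summands then produces a filtration of $W(\lambda)$ by $\lieg_{r-1}[t]$--submodules whose successive quotients are exactly the modules $W(\lseq-\ul-\ul')$, with the stated multiplicities. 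I do not expect a deep obstacle: the real content is Proposition~\ref{assgraded}, and the only steps needing some care are the dimension bookkeeping above (matching the source of $\Psi$ with $|\cal{B}^r(\lambda)|$ via \eqref{eq_basis1}--\eqref{eq_basis2}) and this last refinement, which upgrades the direct--sum decomposition of each associated--graded piece into genuine successive subquotients.
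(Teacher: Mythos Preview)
Your argument is correct and is precisely the one the paper has in mind: the paper gives no explicit proof beyond declaring the result a corollary of Theorem~\ref{weylbasis} and Proposition~\ref{assgraded}, and your dimension count---matching the source of $\Psi$ with $|\cal B^r(\lambda)|$ via \eqref{eq_basis1}--\eqref{eq_basis2} and invoking Theorem~\ref{weylbasis} in ranks $r$ and $r-1$---is exactly how that deduction goes. The only place to be slightly more careful is the sentence ``since it is the associated graded space of a filtration of the finite--dimensional module $W(\lambda)$, we have $\dim\pgr(W(\lambda))=\dim W(\lambda)$'': this is not automatic for an arbitrary filtration indexed by an infinite totally ordered set, but it holds here (the filtration is exhaustive and separated, as is implicitly used already in \S\ref{sec_mtproof} via the reference to \cite[Proposition~2.2.2]{CL}).
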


\section{Proof of Proposition~\ref{assgraded}}
\subsection{}
For $(\boi', \boi)\in\bi^{r-1}\times \bi^{r}$, let
\begin{eqnarray*}&{\mathbf{U}(\lien^-[t]))^{\ge (\boi', \boi)}}
&=\sum_{\substack{{(\ul',\,\us',\,\ul,\,\us)\in\bF^{r-1}\times\bF^{r}}\\ {(\ul',\,|\us'|,\,\ul,\,|\us|)\ge (\boi', \boi)}}}\bu(\lien^-_{r-1}[t])\,\bxurm^-(\ul',\us')\,\bxur^-(\ul,\us),\\ & \mathbf{U}(\lien^-[t])^{> (\boi', \boi)}
&=\sum_{\substack{{(\ul',\,\us',\,\ul,\,\us)\in\bF^{r-1}\times\bF^{r}}\\ {(\ul',\,|\us'|,\,\ul,\,|\us|)> (\boi', \boi)}}}\bu(\lien^-_{r-1}[t])\,\bxurm^-(\ul',\us')\,\bxur^-(\ul,\us).
\end{eqnarray*}

Clearly we have \begin{align*}
&W(\lambda)^{\ge (\boi',\boi)} \supseteq \bu(\lien^-[t])^{\ge (\boi',\boi)} \,w_\lambda,\quad 
&W(\lambda)^{> (\boi',\boi)}\supseteq\bu(\lien^-[t])^{> (\boi',\boi)} \,w_\lambda,\qquad
&\forall\,\,(\boi',\boi)\in\bi^{r-1}\times \bi^{r}.
\end{align*}
 We will show in
Proposition~\ref{act2} that  the reverse inclusions hold as well.

Let $\mathbf{e}_i\in\bN^r, i\in I,$ denote the standard basis vectors.
\begin{lemma}\label{lem1act}
 Let $(\ul,\us)\in \bF^r$ and $\lambda\in P^+$.
\begin{enumerate}
\item\label{lem_act1} For $g=h\otimes t^{s+1}, \,x_{i,{j}}^+\otimes t^s, \,x_{i,\overline{j}}^+\otimes t^s$, where $1\le i\le j<r$, $h\in\lieh$, and $s\in \bN$, the element $g\,\bxur^-(\ul,\us)\, w_\lambda\in W(\lambda)$
 is a linear combination of elements of the form
$\bxur^-(\um, \up)\,w_\lambda$ with $(\um, |\up|) > (\ul, |\us|)$.

\item\label{lem_act2} For $1\le i\le r$ and $s\in\bN$, the element $(x^+_{i,r}\otimes t^s)\,\bxur^-(\ul,\us)\,w_\lambda\in W(\lambda)$
 is a linear combination of elements of the form
$$\bxur^-(\um, \up)\,w_\lambda, \quad 
(x^-_{k,i-1}\otimes t^{s'})\,\bxur^-(\um, \up)\,w_\lambda, \quad\textup{and}\quad
\delta_{i,r}\,(x^-_{m,\overline{k}}\otimes t^{s'})\, \bxur^-(\um, \up)\,w_\lambda,$$ with $\um\eql \ul,\, m\le k<i$.
\item\label{lem_act3} For all $(\un, \uq)\in \bF^r$ with $\un\neq\underline{0},$  we have
$$\bxur^+(\un, \uq)\, \bxur^-(\ul,\us)\,w_\lambda \in
\sum\limits_{\{(\um,\,\up)\in\bF^r: \um\eql\ul\}}
\mathbf{U}(\lien^-_{r-1/2}[t])\,\bxur^-(\um,\up)\,w_\lambda.$$
\end{enumerate}
\end{lemma}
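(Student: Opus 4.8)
The plan is to push the raising operator through $\bxur^-(\ul,\us)\,w_\lambda$ one tensor factor at a time and to collect the commutator terms that arise. The mechanism driving everything is that the terminal term --- the one in which the operator has been moved past every factor of the monomial --- annihilates $w_\lambda$: for $g = h\otimes t^{s+1}$ this is $(h\otimes t^{s+1})w_\lambda = 0$ since $s+1\ge 1$, and for every positive current element, in particular $g = x_{i,j}^+\otimes t^s$, $g = x_{i,\overline{j}}^+\otimes t^s$, and $g = x_{i,r}^+\otimes t^s = x_{i,\overline{r}}^+\otimes t^s$, it is $g\,w_\lambda = 0$ because $\lien^+[t]\,w_\lambda = 0$. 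Since $\bigoplus_{i}\complex\,x_{i,\overline{r}}^-$ is abelian (the roots $-(\varepsilon_i+\varepsilon_r)$ and $-2\varepsilon_r$ admit no root among their pairwise sums), the monomial $\bxur^-(\ul,\us)$ is insensitive to the order of its factors, so products of such factors can be reassembled into monomials of the same shape. I would also record at the outset the elementary fact that the order on $\bi^r$ is unchanged under prepending a common factor $x_{\cdot,\overline{r}}^-\otimes t^\cdot$ to both sides, and that $\eql$ is transitive. Everything below runs parallel to the arguments of \cite{CL}.

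For part (1) I would induct on the number $N = \ell_1+\cdots+\ell_r$ of factors of $\bxur^-(\ul,\us)$, the base case $N=0$ being $g\,w_\lambda = 0$. For $N\ge 1$ separate off the leftmost factor $A = x_{p,\overline{r}}^-\otimes t^m$ ($p$ least with $\ell_p>0$), write $g(AB)w_\lambda = A(gB)w_\lambda + [g,A]\,B\,w_\lambda$, and treat the first summand by the inductive hypothesis applied to $g\,B\,w_\lambda$ ($B$ has $N-1$ factors, with configuration obtained from $(\ul,\us)$ by deleting a block-$p$ factor), re-prepending $A$ at the end without disturbing the order. For the second summand one computes $[g,A]$ in $\lieg[t]$: if $g = h\otimes t^{s+1}$ it is a scalar multiple of $x_{p,\overline{r}}^-\otimes t^{s+1+m}$, the same block at higher degree, already of the claimed shape; if $g = x_{i,j}^+\otimes t^s$ with $j<r$ it vanishes unless $p=i$, in which case it is a scalar multiple of $x_{j+1,\overline{r}}^-\otimes t^{s+m}$, and since $j+1>i$ this trades a block-$i$ factor for a block-$(j+1)$ one and pushes $\ul$ strictly $\eql$-downward; and if $g = x_{i,\overline{j}}^+\otimes t^s$ with $j<r$ it vanishes unless $p\in\{i,j\}$, in which case it is a scalar multiple of a positive root vector $x_{a,r-1}^+\otimes t^{s+m}$ with $a\in\{i,j\}$ --- again of a shape covered by part (1), to which one reapplies the inductive hypothesis acting on $B$.

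Part (2) is then a single expansion of $(x_{i,\overline{r}}^+\otimes t^s)\,\bxur^-(\ul,\us)w_\lambda$ into the commutators $[x_{i,\overline{r}}^+\otimes t^s,\, x_{k,\overline{r}}^-\otimes t^m]$, which split into three cases. For $k>i$ the bracket is a scalar multiple of the positive root vector $x_{i,k-1}^+\otimes t^{s+m}$; feeding this into part (1) (acting on the factors to its right, then re-prepending the ones to its left) gives a combination of $\bxur^-(\um,\up)w_\lambda$, and tracking the block-count change forces $\um\eql\ul$. For $k=i$ the bracket is $\alpha_{i,\overline{r}}^\vee\otimes t^{s+m}$: if $s=m=0$ this Cartan element acts by a weight scalar and yields $\bxur^-(\um,\up)w_\lambda$ with $\um=\ul$ less a block-$i$ factor, while if $s+m\ge 1$ it is of the form $h\otimes t^{\ge 1}$ and part (1) applies. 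For $k<i$ the bracket is a scalar multiple of the negative root vector $x_{k,i-1}^-\otimes t^{s+m}\in\lien^-_{r-1/2}[t]$; when $i<r$ this element brackets trivially with every factor $x_{k',\overline{r}}^-$ of block $k'\le k$ (the weight $\varepsilon_i$ cannot cancel any of $-\varepsilon_{k'}$, $-\varepsilon_r$, $-\varepsilon_k$), so it slides freely to the far left, producing exactly a term $(x_{k,i-1}^-\otimes t^{s'})\,\bxur^-(\um,\up)w_\lambda$ with $k<i$ and $\um\eql\ul$; when $i=r$ one has instead nonzero brackets $[x_{k',\overline{r}}^-,\, x_{k,r-1}^-]$, which are scalar multiples of a root vector $x_{a,\overline{b}}^-$ with $a\le b<r$, so the leftward slide may absorb one more $x_{\cdot,\overline{r}}^-$ factor and become $x_{a,\overline{b}}^-\otimes t^\cdot$, which then commutes with all remaining $x_{\cdot,\overline{r}}^-$ and reaches the far left cleanly, yielding the $\delta_{i,r}$ term $(x_{m,\overline{k}}^-\otimes t^{s'})\,\bxur^-(\um,\up)w_\lambda$ with $m\le k<i$; no further absorption is possible, so at most one extra generator appears, and $\um\eql\ul$ holds in all terms by the monotonicity remark together with transitivity of $\eql$.

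For part (3) I would induct on $|\un| = n_1+\cdots+n_r\ge 1$, peeling off the rightmost factor $C = x_{i_0,\overline{r}}^+\otimes t^{s_0}$ of $\bxur^+(\un,\uq)$. By part (2), $C\,\bxur^-(\ul,\us)w_\lambda$ is a combination of $\bxur^-(\um,\up)w_\lambda$ and $Z\,\bxur^-(\um,\up)w_\lambda$ with $Z\in\lien^-_{r-1/2}[t]$ a single root vector and $\um\eql\ul$; applying the remaining product of raising factors to a term of the first kind is immediate from the inductive hypothesis (and $\um\eql\ul$ persists by transitivity), and to a term of the second kind one first commutes that product rightward past $Z$, using that each bracket $[x_{k,\overline{r}}^+\otimes t^\cdot,\, Z]$ is zero, or lies in $\lien^-_{r-1/2}[t]$, or --- only when $Z$ has the type $x_{c,r-1}^-\otimes t^\cdot$ and $k=c$ --- is a scalar multiple of $x_{r,\overline{r}}^+\otimes t^\cdot$; the first two possibilities are swept into the $\mathbf{U}(\lien^-_{r-1/2}[t])$-prefix, while the last keeps the number of raising factors fixed and must be absorbed by a secondary induction on a lexicographic refinement of the multidegree. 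Iterating over all factors of $\bxur^+(\un,\uq)$ then places everything inside $\sum_{\um\eql\ul}\mathbf{U}(\lien^-_{r-1/2}[t])\,\bxur^-(\um,\up)w_\lambda$. I expect this last step to be the main obstacle: the regeneration of raising factors $x_{r,\overline{r}}^+\otimes t^\cdot$ while commuting past the accumulated $\lien^-_{r-1/2}[t]$-terms means the induction cannot rest on $|\un|$ alone, and the delicate point is to choose the refined statistic making the nested recursion terminate --- along with the routine but lengthy verification that every bracket arising in parts (1)--(3) really does land in one of the few admissible shapes used above.
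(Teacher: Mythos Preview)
Your arguments for parts~(1) and~(2) are correct and follow the same commutator bookkeeping as the paper (which is terser: it simply records the relevant brackets $[x_{i,\overline{j}}^+,x_{k,r}^-]$, $[x_{i,r}^+,x_{k,r}^-]$, $[x_{m,r}^-,x_{k,i-1}^-]$ and cites \cite{CL} for the remaining cases).

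The obstacle you flag in part~(3) is real in your ordering but entirely avoidable, and this is the one idea you are missing. The paper first rewrites $\bxur^+(\un,\uq)$ in \emph{reversed} block order,
\[
\bxur^+(\un,\uq)=\bx^+_{r,r}(n_r,\boq_r)\,\bx^+_{r-1,r}(n_{r-1},\boq_{r-1})\cdots\bx^+_{1,r}(n_1,\boq_1),
\]
which is permissible since $\bigoplus_i\complex\,x^+_{i,r}$ is abelian. Peeling from the right now means applying the \emph{low}-index factors first. After an application of part~(2) with index $i$, the only $\lien^-_{r-1/2}$ prefixes produced are $x^-_{k,i-1}$ with $k<i$ (together with $x^-_{m,\overline{k}}$, but only once $i=r$). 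Every remaining raising factor is $x^+_{p,r}$ with $p\ge i$, and one checks directly that
\[
[x^+_{p,r},\,x^-_{k,i-1}]=0\quad\text{for }k<i\le p\le r,\qquad
[x^+_{r,r},\,x^-_{k,i-1}]=0=[x^+_{r,r},\,x^-_{m,\overline{k}}].
\]
Thus each remaining raising factor commutes freely past the accumulated prefix, part~(2) applies again, and a plain induction on $|\un|$ terminates with no regeneration of positive-root factors. The regeneration you observed arises precisely from brackets $[x^+_{p,r},x^-_{k,i-1}]$ with $p<i$ (when $p=k$ this is proportional to $x^+_{i,r}$); processing blocks in increasing order guarantees $p\ge i$ throughout, so the phenomenon never occurs and no secondary lexicographic refinement is needed.
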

\begin{proof}
 First, note that $g\,\bxur^-(\ul,\us)\,w_\lambda=[g,\, \bxur^-(\ul,\us)] w_\lambda$, for all $g\in\lien^+[t]\oplus\lieh t[t]$. The proof of 
 part~\eqref{lem_act1} in the cases
$g=h\otimes t^{s+1}, \,x_{i,{j}}^+\otimes t^s$ is same as that of \cite[Lemma 3.2.1]{CL}. We now consider the case $g=x_{i,\overline{j}}^+\otimes t^s$, where $1\le i\le j<r$ and $s\in\bn$. For $1\le k\le k'\le r$, we have 
$$ 
[x_{i,\overline{j}}^+, \,x_{k,r}^-]=\delta_{k,i}\, x^+_{j,r-1}+\delta_{k,j}\, x^+_{i,r-1}  \qquad\textup{and}\qquad [x^+_{p,r-1}, x^-_{k',r}]=-2\delta_{k',p}\,x_{r,r}^-,\,\,\forall\,\, p\in\{i,j\}.$$ 
This implies that $(x_{i,\overline{j}}^+\otimes t^s)\,\bxur^-(\ul,\us)\,w_\lambda\in W(\lambda)$
 is a linear combination of elements of the form
$\bxur^-(\um, \up)\,w_\lambda$ with $\um=\ul-\mathbf{e}_i-\mathbf{e}_j+\mathbf{e}_r$.
Clearly $\um\eql\ul$. This completes the proof of part~\eqref{lem_act1}.

The proof of part~\eqref{lem_act2} follows from the following observations:
$$[x_{i,r}^+,\, x^-_{k,r}]=\begin{cases}
x_{i, k-1}^+, &i<k,\\
\alpha^\vee_{i,r}, &i=k,\\
x^-_{k,i-1}, &i>k.
\end{cases}, \qquad [x_{i,k-1}^+,\, x_{m,r}^-]=0, \  m\geq k, \qquad [x^-_{m,r}, \,x^-_{k,i-1}]=\delta_{i,r} \,x^-_{m,\overline{k}}, \  m\leq k.$$

To prove part~\eqref{lem_act3}, we write 
$\bxur^+(\un, \uq)$ in the order  $\bx^+_{r,r} (n_r, \boq_r)\,
\cdots\bx^+_{1,r} (n_1, \boq_1)$. 
Since 
$$[x^+_{p,r},\,x^-_{k, i-1}]=0,\,\,\forall\,\,k<i\leq p\leq r \qquad\textup{and}\qquad [x^+_{r,r},\,x^-_{k,i-1}]=0=[x_{r,r}^+, \,x_{m,\overline{k}}^-], \,\,\forall \,\,m\le k<i\leq r,$$  
the proof  follows by repeatedly using part~\eqref{lem_act2}.
\end{proof}

\begin{lemma}\label{lem2act}
 Let $(\ul',\us')\in \bF^{r-1}$.
 \begin{enumerate}
 \item\label{lem2_act1}  For $g=h\otimes t^{s+1},\,x_{i,{j}}^+\otimes t^s$, where $1\leq i\leq j<r-1$, $h\in\lieh_{r-1}$, and $s\in \bN,$ the element $[g,\,\bxurm^-(\ul',\us')]$
 is a linear combination of elements of the form
$\bxurm^-(\um',\up')$ with $(\um', |\up'|) > (\ul', |\us'|)$.
\item\label{lem2_act2} For $g=x_{i,\overline{j}}^+\otimes t^s$, where $1\le i\le j<r$ and $s\in \bN$, 
the element $[g,\,\bxurm^-(\ul',\us')]$
 is a linear combination of elements of the form 
 $\bxurm^-(\um',\up')\,(x^+_{k,r}\otimes t^{s'})$ with $\um'\eql\ul'$, $k\in\{i, j, r\}.$
 \end{enumerate}
\end{lemma}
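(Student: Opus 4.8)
The plan is to prove both parts by a direct commutator computation: push $g$ past the product $\bxurm^-(\ul',\us')$ one factor at a time using the Leibniz rule, then recollect. The structural input is that $\bxurm^-(\ul',\us')$ involves only the elements $x^-_{k,r-1}\otimes t^{\bullet}$ with $1\le k\le r-1$, and these span an abelian subalgebra of $\lien^-_{r-1}[t]$ (one checks $[x^-_{k,r-1},x^-_{\ell,r-1}]=0$ from the matrix descriptions). Consequently, after any partial commutator computation the resulting $x^-_{k,r-1}$-factors may be freely re-sorted within each block $k$ into non-decreasing exponent order, which is exactly what is needed to recognize the output as a combination of monomials $\bxurm^-(\um',\up')$, possibly times a trailing factor.

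For part~\eqref{lem2_act1} the relevant brackets, for $h\in\lieh_{r-1}$ and $1\le i\le j<r-1$ (so that $j+1\le r-1$), are
$$[h\otimes t^{s+1},\ x^-_{k,r-1}\otimes t^m]=-\langle\alpha_{k,r-1},h\rangle\, x^-_{k,r-1}\otimes t^{m+s+1},\qquad [x^+_{i,j}\otimes t^s,\ x^-_{k,r-1}\otimes t^m]=-\delta_{i,k}\, x^-_{j+1,r-1}\otimes t^{m+s}.$$
The first only raises the exponent in a single factor, increasing the corresponding coordinate of $|\us'|$ while leaving $\ul'$ fixed; the second deletes an $x^-_{i,r-1}$-factor and inserts an $x^-_{j+1,r-1}$-factor with $j+1>i$, lowering the $i$th coordinate of $\ul'$. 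From the definition of the total order on $\bi^{r-1}$ one reads off in each case that the monomials produced satisfy $(\um',|\up'|)>(\ul',|\us'|)$. This reproduces the argument of \cite[Lemma~3.2.1]{CL} verbatim; nothing new is needed here.

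For part~\eqref{lem2_act2} the inputs are the identities
$$[x^+_{i,\overline{j}},\ x^-_{k,r-1}]=-\delta_{i,k}\,x^+_{j,r}-\delta_{j,k}\,x^+_{i,r},\qquad [x^+_{k',r},\ x^-_{k,r-1}]=-2\,\delta_{k,k'}\,x^+_{r,r},\qquad [x^+_{r,r},\ x^-_{k,r-1}]=0,$$
valid for $1\le i\le j<r$ and $1\le k,k'<r$ (the case $i=j$, where $x^+_{i,\overline{i}}=E_{i,-i}$, is a degenerate instance of the first identity and is handled the same way), each verified from the matrix forms of the generators. Applying the Leibniz rule to $[x^+_{i,\overline{j}}\otimes t^s,\,\bxurm^-(\ul',\us')]$, every nonzero summand deletes one factor $x^-_{k,r-1}\otimes t^m$ with $k\in\{i,j\}$ and puts in its place a factor $x^+_{k'',r}\otimes t^{m+s}$ with $k''\in\{i,j\}$; this new factor is then pushed to the right past the remaining $x^-_{\cdot,r-1}$-factors, which it commutes with except when it meets an $x^-_{k'',r-1}$-factor, where the bracket produces $x^+_{r,r}\otimes t^{\bullet}$, and since $x^+_{r,r}$ commutes with every $x^-_{\cdot,r-1}$ this factor slides all the way to the far right. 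Collecting, $[x^+_{i,\overline{j}}\otimes t^s,\,\bxurm^-(\ul',\us')]$ becomes a linear combination of monomials $\bxurm^-(\um',\up')\,(x^+_{k,r}\otimes t^{s'})$ in which $\um'$ arises from $\ul'$ by decrementing one or two coordinates lying in $\{i,j\}$ --- so the first coordinate at which $\um'$ and $\ul'$ disagree is strictly smaller in $\um'$, i.e.\ $\um'\eql\ul'$ --- and the trailing factor is one of $x^+_{j,r}$, $x^+_{i,r}$, $x^+_{r,r}=x^+_{r,\overline{r}}$, giving $k\in\{i,j,r\}$.

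The only step that demands real care is the push-right argument in part~\eqref{lem2_act2}: one has to check that the auxiliary factor created by the first bracket always exits on the right as a single element $x^+_{k,r}\otimes t^{s'}$, and that no coordinate of $\ul'$ outside $\{i,j\}$ is disturbed. Both are immediate from the three displayed bracket relations --- in particular from $[x^+_{r,r},x^-_{k,r-1}]=0$ --- so once those relations are in hand the remainder is routine bookkeeping.
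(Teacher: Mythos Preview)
Your proof is correct and follows essentially the same approach as the paper's: for part~\eqref{lem2_act1} you appeal to the same Leibniz-rule computation as in \cite[Lemma~3.2.1]{CL}, and for part~\eqref{lem2_act2} you use exactly the three bracket identities the paper records, together with the push-to-the-right argument that those identities make possible. Your write-up simply supplies more of the bookkeeping detail that the paper leaves implicit.
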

\begin{proof}
The proof of part~\eqref{lem2_act1} is similar to that of \cite[Lemma 3.2.1]{CL}. The proof of  part~\eqref{lem2_act2} follows from the observations that
$$ 
[x_{i,\overline{j}}^+, \,x_{k,r-1}^-]=-\delta_{k,i} \,x^+_{j,r}-\delta_{k,j}\, x^+_{i,r}, \qquad [x^+_{p,r}, \,x^-_{k',r-1}]=-2\delta_{k', p}\,x_{r,r}^+,  \qquad\textup{and}\qquad[x^+_{r,r}, \,x^-_{k'',r-1}]=0,$$ 
for $1\le i\le j<r$,  $1\le k\le k'\le k''< r$, and $p\in\{i,j\}$.
\end{proof}

\begin{proposition}\label{act2} 
For $\lambda\in P^+,$ we have 
$$W(\lambda)^{\ge (\boi', \boi)} =
\bu(\lien^-[t])^{\ge (\boi', \boi)} \,w_\lambda, \quad W(\lambda)^{> (\boi', \boi)} =
\bu(\lien^-[t])^{> (\boi', \boi)} \,w_\lambda, \quad\forall\,\, (\boi', \boi)\in \bi^{r-1}\times \bi^{r}.$$
\end{proposition}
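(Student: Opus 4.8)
The plan is to prove the two stated equalities simultaneously by a downward induction on the total order on $\bi^{r-1}\times\bi^r$. The inclusions $\supseteq$ are already recorded, so only the reverse inclusions $\subseteq$ need proof. Fix $(\boi',\boi)\in\bi^{r-1}\times\bi^r$ and assume (the inductive hypothesis) that $W(\lambda)^{\ge(\um',\um)}=\bu(\lien^-[t])^{\ge(\um',\um)}\,w_\lambda$ for every $(\um',\um)>(\boi',\boi)$; equivalently, every element of $W(\lambda)^{>(\boi',\boi)}$ already lies in $\bu(\lien^-[t])^{>(\boi',\boi)}\,w_\lambda$. A general element of $W(\lambda)^{\ge(\boi',\boi)}$ is a sum of terms $u\,\bxurm^-(\ul',\us')\,\bxur^-(\ul,\us)\,w_\lambda$ with $u\in\bu(\lien^-_{r-1}[t])$ and $(\ul',|\us'|,\ul,|\us|)\ge(\boi',\boi)$. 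If the inequality is strict the term is handled by the inductive hypothesis, so it suffices to treat $(\ul',|\us'|,\ul,|\us|)=(\boi',\boi)$; and by the PBW theorem for $\bu(\lieg[t])=\bu(\lien^-[t])\,\bu(\lieh[t])\,\bu(\lien^+[t])$ together with the straightening available inside $\bu(\lien^-_{r-1}[t])$, it is enough to show that for $g$ ranging over the PBW monomials in $\lien^+[t]$ and $\lieh t[t]$ one can move $g$ past $\bxurm^-(\ul',\us')\,\bxur^-(\ul,\us)$ and act on $w_\lambda$, producing only elements of $\bu(\lien^-[t])^{\ge(\boi',\boi)}\,w_\lambda$ — and, crucially, that the error terms produced all have strictly larger index, so that modulo $W(\lambda)^{>(\boi',\boi)}$ nothing new is created.

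The mechanics of this pushing-past step are exactly what Lemmas~\ref{lem1act} and~\ref{lem2act} were built to supply. First one slides $g$ through the $\bi^{r-1}$-factor $\bxurm^-(\ul',\us')$ using Lemma~\ref{lem2act}: a Cartan-current or a "finite-type" raising generator $x^+_{i,j}$ with $1\le i\le j<r-1$ only produces terms $\bxurm^-(\um',\up')$ with $(\um',|\up'|)>(\ul',|\us'|)$ (part~\eqref{lem2_act1}), while a generator of the form $x^+_{i,\overline j}$ re-emerges as $\bxurm^-(\um',\up')\cdot(x^+_{k,r}\otimes t^{s'})$ with $\um'\eql\ul'$ and $k\in\{i,j,r\}$ (part~\eqref{lem2_act2}); in both cases the surviving "new" raising operator belongs to $\lien^+[t]$ relative to $\lieg$ but acts trivially on the $r$-part until it reaches $\bxur^-(\ul,\us)$. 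One then pushes whatever raising operators have accumulated through the $\bi^r$-factor $\bxur^-(\ul,\us)$ using Lemma~\ref{lem1act}: parts~\eqref{lem_act1} and~\eqref{lem_act2} show that each such operator either annihilates (after reaching $w_\lambda$, using $\lien^+[t]w_\lambda=0$ and $(h\otimes t^s)w_\lambda=0$ for $s>0$), or produces $\bxur^-(\um,\up)w_\lambda$ with $(\um,|\up|)>(\ul,|\us|)$, or produces a term with an extra $\lien^-_{r-1}[t]$ (or $\lien^-_{r-1/2}[t]$) factor but with $\um\eql\ul$; and part~\eqref{lem_act3} packages the iterated application so that $\bxur^+(\un,\uq)\bxur^-(\ul,\us)w_\lambda$ always lands in $\sum_{\um\eql\ul}\bu(\lien^-_{r-1/2}[t])\,\bxur^-(\um,\up)w_\lambda$. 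Every outcome either strictly increases the $\bi^r$-index (falling under the inductive hypothesis), or keeps $\um\eql\ul$ and strictly increases the $\bi^{r-1}$-index (again the inductive hypothesis applies, since the order on $\bi^{r-1}\times\bi^r$ breaks ties in the $\bi^r$-slot by the $\bi^{r-1}$-slot), or leaves the index unchanged and stays literally inside $\bu(\lien^-[t])^{\ge(\boi',\boi)}w_\lambda$. Collecting, $W(\lambda)^{\ge(\boi',\boi)}\subseteq\bu(\lien^-[t])^{\ge(\boi',\boi)}w_\lambda$, and the same argument with $\ge$ replaced by $>$ everywhere gives the companion statement.

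I expect the main obstacle to be bookkeeping rather than any genuinely new idea: one must verify that the auxiliary raising operators $x^+_{k,r}\otimes t^{s'}$ generated by Lemma~\ref{lem2act}\eqref{lem2_act2} — which live "between" the $r-1$ block and the $r$ block — really do commute past all of $\bxurm^-(\ul',\us')$ to their right (they do, since $x^+_{k,r}$ kills $x^-_{i',j'}$ and $x^-_{i',\overline{j'}}$ for the relevant indices, cf.\ the commutator identities in the proof of Lemma~\ref{lem1act}\eqref{lem_act3}) and then feed correctly into Lemma~\ref{lem1act}\eqref{lem_act3}. A second delicate point is making the induction well-founded: one checks that in the total order the $\bi^r$-component is the primary key and the $\bi^{r-1}$-component the secondary key, so that "$\um\eql\ul$ and $|\up|$-index up" or "$\um=\ul$, $|\up|$-index up, $\um'$-index up" are both genuine increases; with that observed, the induction closes exactly as in \cite[\S3]{CL}.
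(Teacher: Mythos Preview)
Your approach is essentially the paper's: reduce to pushing $\bu(\lieb^+_{r-1}[t])$ past $\bxurm^-(\ul',\us')\,\bxur^-(\ul,\us)$ using Lemmas~\ref{lem1act} and~\ref{lem2act}. Two remarks. First, the downward induction is unnecessary: the paper proves directly that
\[
\bu(\lieg_{r-1}[t])\,\bxurm^-(\ul',\us')\,\bxur^-(\ul,\us)\,w_\lambda
\subset
\bu(\lien^-[t])^{\ge(\ul',|\us'|,\ul,|\us|)}\,w_\lambda,
\]
which simultaneously yields both equalities. (Note also that the relevant PBW decomposition is for $\lieg_{r-1}[t]$, not for $\lieg[t]$.)

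Second, the commutator you single out as the ``main obstacle'' is the wrong one. The auxiliary raising operators $x^+_{k,r}\otimes t^{s'}$ produced by Lemma~\ref{lem2act}\eqref{lem2_act2} do \emph{not} commute with the generators $x^-_{i',r-1}$ appearing in $\bxurm^-$ (indeed $[x^+_{k,r},\,x^-_{k,r-1}]=-2\,x^+_{r,r}\neq 0$), and no such commutation is needed, since Lemma~\ref{lem2act}\eqref{lem2_act2} already delivers $x^+_{k,r}$ on the right of $\bxurm^-$. What is actually required when iterating over $g_1\cdots g_m$ with $g_i\in\{x^+_{i,\overline{j}}\otimes t^s\}$ is that the \emph{next} $g_{i}$ commutes with the already-accumulated $x^+_{k,r}$ factors, and this is the identity $[x^+_{i,\overline{j}},\,x^+_{k,r}]=0$ (valid for all $1\le i\le j<r$ and $1\le k\le r$). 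With that in hand the $x^+_{k,r}$'s pile up into a $\bxur^+(\un,\uq)$ on the right, and Lemma~\ref{lem1act}\eqref{lem_act3} finishes exactly as you describe.
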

\begin{proof}
It is enough to show for $(\ul',\us')\in\bF^{r-1}$ and $(\ul,\us)\in\bF^r$ that $$
\bu(\lieg_{r-1} [t])\,  \bxurm^-(\ul',\us')\,\bxur^-(\ul,\us)\,
w_\lambda\subset \bu(\lien^-[t])^{\ge (\ul',\,|\us'|,\,\ul,\,|\us|)}\,
w_\lambda.$$ Since $\bu(\lieg_{r-1} [t]) =\bu(\lien^-_{r-1}
[t])\bu(\lieb_{r-1}^+ [t]),$ it suffices to prove that
 $$\bu(\lieb_{r-1}^+ [t])\,
\bxurm^-(\ul',\us')\,\bxur^-(\ul,\us) \,w_\lambda\subset\bu(\lien^-[t])^{\ge (\ul',\,|\us'|,\,\ul,\,|\us|)}\,
w_\lambda.$$
For $g \in \lieb ^+_{r-1}[t]$, we have 
\begin{equation}\label{eq_bactonxrmxr}
g\cdot\bxurm^-(\ul',\us')\,\bxur^-(\ul,\us) \,w_\lambda =   \bxurm^-(\ul',\us')\, g\, \bxur^-(\ul,\us) \,w_\lambda+[g,\,\bxurm^-(\ul',\us')]\,\bxur^-(\ul,\us) \,w_\lambda.
\end{equation}
For $h\in\lieh_{r-1}$, we observe that
\begin{equation}\label{eqhact}
h\cdot \bxurm^-(\ul',\us')\,\bxur^-(\ul,\us) \,w_\lambda=
\langle \lseq-\ul-\ul', \,h\rangle
\bxurm^-(\ul',\us')\,\bxur^-(\ul,\us) \,w_\lambda.
\end{equation}

Consider the subalgebras $\mathfrak{a}$ and $\lieb'$  of $\lieb^+_{r-1}$ given by 
$$\mathfrak{a}=\bigoplus\limits_{1\leq i\leq j<r}\complex\, x^+_{i,\overline{j}}\qquad\textup{and}\qquad\lieb'= \lieh_{r-1}\bigoplus\limits_{1\leq i\leq j<r-1} \complex \,x^+_{i,j}.$$ 
Observe that $\lieb^+_{r-1}=\mathfrak{a}\oplus\lieb'$ and hence $\bu(\lieb^+_{r-1}[t])=\bu(\mathfrak{a}[t])\,\bu(\lieb'[t])$.  
Using equations~\eqref{eq_bactonxrmxr}--\eqref{eqhact}, Lemmas~\ref{lem1act}~\eqref{lem_act1}
and \ref{lem2act}~\eqref{lem2_act1}, we get that 
$\bu(\lieb' [t])\,  \bxurm^-(\ul',\us')\,\bxur^-(\ul,\us)\,
w_\lambda$ is contained in the span of the elements of the form 
$\bxurm^-(\um',\up')\,\bxur^-(\um,\up)\,
w_\lambda$ with $(\um',\,|\up'|,\,\um,\,|\up|)\geq(\ul',\,|\us'|,\,\ul,\,|\us|)$.
Hence we only need to show for $g_1, g_2, \ldots, g_m\in\{x^+_{i, \overline{j}}\otimes t^s: s\in\bN, \,1\leq i\leq j<r\}$ that 
\begin{align}\label{aact} g_1\,g_2\cdots g_m \, \bxurm^-(\ul',\us')\,\bxur^-(\ul,\us)\,
w_\lambda\in \sum_{\{(\um,\,\up)\in\bF^r:(\um,\,|\up|)>(\ul,\,|\us|)\}} \bu(\lien_{r-1/2}^-[t])\,\bxur^-(\um, \up)\,
w_\lambda,\end{align}
Since $[x^+_{i,\overline{j}}, \,x^+_{k,r}]=0$, for all $1\leq i\leq j<r, \,1\leq k\leq r$, by using repeatedly equation~\eqref{eq_bactonxrmxr}, Lemmas~\ref{lem1act}~\eqref{lem_act1} and \ref{lem2act}~\eqref{lem2_act2}, we get that the element $g_1\,g_2\cdots g_m \, \bxurm^-(\ul',\us')\,\bxur^-(\ul,\us)\,
w_\lambda$ is in the span of the elements from 
$$\bxurm^-(\ul',\us')\,\bxur^-(\um,\up)\,
w_\lambda,\qquad (\um, |\up|)>(\ul, |\us|)$$ together with the elements from 
$$
\bxurm^-(\um',\up')\,\bxur^+(\un,\uq)\,\bxur^-(\um,\up)\,
w_\lambda,\qquad\um'\eql \ul',\quad (\un,\uq)\in F^r, \,\, \un\neq\underline{0}, \quad (\um, |\up|)\geq (\ul, |\us|).$$
Now, the proof of \eqref{aact} follows from Lemma~\ref{lem1act}~\eqref{lem_act3}.
\end{proof}
\subsubsection{}{\em Proof of Proposition~\ref{assgraded}~\eqref{assgradedp1}.} 
Using equations~\eqref{eq_bactonxrmxr}--\eqref{eqhact} and Lemmas~\ref{lem1act}--\ref{lem2act},  we get that the
element $$\pgr^{(\ul,
|\us|,\ul',|\us'|)} (\bxurm^-(\ul',\us')\,\bxur^-(\ul,\us) \,w_\lambda)\in\pgr(W(\lambda))$$ satisfies the
relations in~\eqref{weylthmreln1} with the weight $ \lseq-\ul-\ul'$. 
Theorem~\ref{theorem_uni1} now implies the existence of the map
$\psi^{\,\ul',\us',\ul,\us}$.
\subsubsection{} 
The proof of Proposition~\ref{assgraded}~\eqref{assgradedp2} is immediate from the following  proposition by  using Proposition~\ref{act2}.
\begin{proposition}\label{mainprop}
Given $\lambda\in P^+,$ we have the following:
\begin{enumerate}
\item\label{mainpropp1} For all $(\ul,\us)\in\bF^r$, we have
\begin{align*}
\bxur^-(\ul,\us)\,w_\lambda  \in &
\sum_{\{\up: (\ul,\,\up)\in\bF^r(\lambda),\, |\up|=|\us|\}} \complex\,\bxur^-(\ul,\up)\,w_\lambda\\ &+\sum_{\{(\um,\,\up)\in\bF^r:(\um,\,|\up|)>(\ul,\,|\us|)\}} \bu(\lien^-_{r-1/2}[t])\,\bxur^-(\um,\up)\,w_\lambda.
\end{align*}
\item\label{mainpropp2}
Let $(\ul,\us)\in\bF^r(\lambda)$. For all $(\ul',\us')\in\bF^{r-1},$ we have
\begin{align*}
\bxurm^-(\ul',\us')\,\bxur^-(\ul,\us) \,w_\lambda\in &\sum_{\{\up ': (\ul',\, \up')\in\bF^{r-1}(\lseq-\ul),\,|\up'|=|\us'|\}} \complex\,\bxurm^-(\ul', \up')\,\bxur^-(\ul,\us) \,w_\lambda \\ & +
\sum_{\{(\um',\,\up')\in\bF^{r-1} : (\um',\, |\up'|)>(\ul',\, |\us'|)} \bu(\lien^-_{r-1}[t]) \,\bxurm^-(\um', \up')\,\bxur^-(\ul,\us) \,w_\lambda \\ & +
\sum_{\{(\um,\,\up)\in\bF^r : (\um,\,|\up|)>(\ul,\,|\us|)\}} \bu(\lien^-_{r-1/2}[t])\,\bxur^-(\um,\up)\,w_\lambda.
\end{align*}
\end{enumerate}
\end{proposition}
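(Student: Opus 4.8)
The plan is to follow the method of \cite{CL}; the engine is Garland's straightening lemma for $\sltwo[t]$, the rank-one case of Theorem~\ref{weylbasis} (see \cite[\S6]{CP}): if a vector $v$ of a graded module satisfies $x^+_\beta\otimes t^s\cdot v=0$ and $\beta^\vee\otimes t^s\cdot v=0$ for $s>0$, $\beta^\vee v=Nv$ and $(x^-_\beta)^{N+1}v=0$, then $(x^-_\beta\otimes t^{b_1})\cdots(x^-_\beta\otimes t^{b_k})\,v$ is a $\complex$-linear combination of like monomials whose exponent sequence is a partition fitting into $(k,N-k)$ and of the same total degree; in particular it vanishes if $k>N$ or if no such partition exists. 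In our setting these hypotheses will hold only modulo the error terms of the statement — this is exactly what Lemmas~\ref{lem1act} and~\ref{lem2act} supply — so the conclusions come out as containments. I will also use freely that the $x^-_{i,\overline r}$ $(1\le i\le r)$ pairwise commute, and likewise the $x^-_{i,r-1}$.

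For part~\eqref{mainpropp1}, I would write $\bxur^-(\ul,\us)\,w_\lambda=\bx^-_{1,\overline r}(\ell_1,\bos_1)\cdots\bx^-_{r,\overline r}(\ell_r,\bos_r)\,w_\lambda$ and process the factors from right to left. On reaching $\bx^-_{i,\overline r}(\ell_i,\bos_i)$, apply the straightening lemma for $\beta=\alpha_{i,\overline r}$ to the vector $v:=\bx^-_{i+1,\overline r}(\ell_{i+1},\bos_{i+1})\cdots\bx^-_{r,\overline r}(\ell_r,\bos_r)\,w_\lambda$ to its right. One computes $\alpha^\vee_{i,\overline r}v=Nv$ with $N=m_i+\sum_{i<j<r}(m_j-\ell_j)+2(m_r-\ell_r)\ge m_i$ (the excess measuring the slack in the columns $j>i$; $N=m_r$ for $i=r$), while $x^+_{i,\overline r}\otimes t^s\cdot v$, $\alpha^\vee_{i,\overline r}\otimes t^s\cdot v$ $(s>0)$ and $(x^-_{i,\overline r})^{N+1}v$ are, by Lemma~\ref{lem1act}, of the form $\bu(\lien^-_{r-1/2}[t])\,\bxur^-(\um,\up)\,w_\lambda$ with $\um\eql\ul$; it is here that the type-$C$ identities $[x^+_{i,\overline r},x^-_{k,\overline r}]\in\lien^+_{r-1/2}$ and the bracket relations of $x^+_{i,r}$ with the $x^-_{k,r}$ (feeding corrections into $\lien^-_{r-1/2}$, $\csa$, or — the $\delta_{i,r}$ term — $\lien^-_{r-1/2}$) are exactly what is needed. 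The lemma then replaces $\bos_i$ by partitions fitting in $(\ell_i,N-\ell_i)$ modulo error; to reduce this to the rectangle $(\ell_i,m_i-\ell_i)$ of $\bF(m_i)$ I would use $x^-_{i,\overline r}\otimes t^a=c\,[x^-_{i,r-1}\otimes t^a,\,x^-_{r,\overline r}]$ (reflecting $\varepsilon_i+\varepsilon_r=(\varepsilon_i-\varepsilon_r)+2\varepsilon_r$): trading an $x^-_{i,\overline r}$-factor carrying an exponent $>m_i-\ell_i$ for an $x^-_{i,r-1}\in\lien^-_{r-1/2}$ and one more $x^-_{r,\overline r}$-factor moves $(\ell_i,\ell_r)$ to $(\ell_i-1,\ell_r+1)$, i.e.\ to some $\um\eql\ul$, hence into the error. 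Moving the $\lien^-_{r-1/2}[t]$-factors so produced to the left only lowers further last-column counts, so $\um\eql\ul$ is preserved; after finitely many such reductions, what remains at the same order level is a $\complex$-linear combination of $\bxur^-(\ul,\up)\,w_\lambda$ with $(\ul,\up)\in\bF^r(\lambda)$ and $|\up|=|\us|$.

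For part~\eqref{mainpropp2}, fix $(\ul,\us)\in\bF^r(\lambda)$ and put $u:=\bxur^-(\ul,\us)\,w_\lambda$. By Lemma~\ref{lem1act} and \eqref{eqhact}, modulo elements $\bu(\lien^-_{r-1/2}[t])\,\bxur^-(\um,\up)\,w_\lambda$ with $(\um,|\up|)>(\ul,|\us|)$, the vector $u$ satisfies $h\,u=\langle\lseq-\ul,h\rangle u$ $(h\in\lieh_{r-1})$ and the vanishing relations $x^+_{i,j}\otimes t^s\cdot u\equiv 0$, $x^+_{i,\overline j}\otimes t^s\cdot u\equiv 0$, $\lieh_{r-1}\otimes t^s\cdot u\equiv 0$ $(s>0)$ needed to play the role of $w_\lambda$ for the roots $\alpha_{i,r-1}$. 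I would then rerun the argument of part~\eqref{mainpropp1} for $\bxurm^-(\ul',\us')=\bx^-_{1,r-1}(\ell'_1,\bos'_1)\cdots\bx^-_{r-1,r-1}(\ell'_{r-1},\bos'_{r-1})$ acting on $u$: the straightening lemma for the $\alpha_{i,r-1}$ gives partitions fitting in $(\ell'_i,N-\ell'_i)$ with $N=n_i+\sum_{i<j<r}(n_j-\ell'_j)$, $n_i=m_i-\ell_i+\ell_{i+1}$; the corrections are controlled by Lemma~\ref{lem2act} (supplemented, for the operators $x^+_{i,r-1}$ themselves, by the analogous type-$C$ bracket computations), splitting into a $\bu(\lien^-_{r-1}[t])$-part of strictly higher $\bi^{r-1}$-order (the block $\bxur^-(\ul,\us)$ untouched) and a part carrying a factor $x^+_{k,r}\otimes t^{s'}$, which by Lemma~\ref{lem1act}\eqref{lem_act2},\eqref{lem_act3} re-enters $\bu(\lien^-_{r-1/2}[t])\,\bxur^-(\um,\up)\,w_\lambda$ with $\um\eql\ul$ (using $\bxurm^-(\ul',\us')\in\bu(\lien^-_{r-1/2}[t])$); and the reduction to $\bF(n_i)$ uses $x^-_{i,r-1}\otimes t^a=c'\,[x^-_{i,r-2}\otimes t^a,\,x^-_{r-1,r-1}]$, trading into an $\um'\eql\ul'$. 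This produces exactly the three sums of the statement.

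The hardest part, absent in type $A$, is the non-simple ``long'' roots $\alpha_{i,\overline r}$ (and $\alpha_{i,r-1}$): the weight at which the straightening lemma is applied pairs with the coroot to something strictly bigger than $m_i$ (resp.\ $n_i$) whenever the later columns are unsaturated, so bare straightening overshoots the required rectangle and must be interleaved with the trading identities. The real work is the bookkeeping showing that every correction produced along the way is either strictly higher in the relevant order or a $\complex$-combination of fitting monomials at the same level — in particular that the $\delta_{i,r}$-term of Lemma~\ref{lem1act}\eqref{lem_act2} and the $x^+_{k,r}$-corrections of Lemma~\ref{lem2act}\eqref{lem2_act2} never spoil this, and that the prefixes riding along during the right-to-left processing stay controlled.
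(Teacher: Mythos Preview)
Your route is genuinely different from the paper's. For part~\eqref{mainpropp2} the paper does not rerun any straightening argument: it observes that the subalgebra $\lieg'=\lien'^-\oplus\lieh_{r-1}\oplus\lien'^+$ generated by the $x^\pm_{i,j}$ with $j<r$ is isomorphic to $\mathfrak{sl}_r$, builds the auxiliary filtration $\pW(\lambda)^{\ge\boi}$, and then simply quotes the proof of \cite[Proposition~2.2.3(ii)]{CL}, i.e.\ reduces to the already-proved type-$A$ statement. For part~\eqref{mainpropp1} the paper applies the enveloping-algebra form of Garland's lemma (Proposition~\ref{prop_sl2}) to each factor $\bx^-_{i,r}(\ell_i,\bos_i)$ with the \emph{target} bound $n=m_i$ right away. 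The overflow then consists of explicit elements~\eqref{other} involving $\bpr(\bx^+_{k,r}(m-\ell_k,\bop_k)(x^-_{k,r}\otimes 1)^m)$ with $m>m_k$, and these are pushed into the error by Proposition~\ref{prop_bound1}. The latter is the real work: its proof needs Lemma~\ref{lem_dif} (a type-$C$ analogue of \cite[Lemma~3.5.1]{CL} tracking what $\bpr(\bxur^+\bxur^-)$ looks like modulo $\bu(\lien^-_{i_0-1/2}[t])$) and Lemma~\ref{lem_main} (commutators of $\bxur^+$ with $\bu(\lien^-_{r-1/2}[t])$), together with the finite-dimensional branching fact Proposition~\ref{prop_fd} that $V(\lambda)$ is spanned by $\bu(\lien^-_{r-1/2})(x^-_{1,r})^{k_1}\cdots(x^-_{r,r})^{k_r}v_\lambda$ with each $k_i\le m_i$. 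Your approach replaces this whole package by first straightening to the weight-determined bound $N\ge m_i$ and then ``trading down'' via $x^-_{i,\overline r}\otimes t^a=c[x^-_{i,r-1}\otimes t^a, x^-_{r,r}]$; this is more elementary (no $\bpr$, no Proposition~\ref{prop_fd}) but shifts all the difficulty into the bookkeeping at the straightening step.

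That bookkeeping is where your sketch is thin. The module-level straightening lemma needs $(\alpha^\vee_{i,\overline r}\otimes t^s)v=0$ for $s>0$, and this \emph{fails} for $v=\bx^-_{i+1,\overline r}(\ldots)\cdots\bx^-_{r,\overline r}(\ldots)w_\lambda$: one gets instead a combination of tails with one exponent in some position $j>i$ shifted up by $s$. Lemma~\ref{lem1act}, which you cite, does not literally cover this case (part~\eqref{lem_act1} excludes $j=r$, and the claim $\um\eql\ul$ you make is too strong---the Cartan errors keep $\um=\ul$ and only force $|\up|\eqd|\us|$). The correct statement, which you would have to prove separately, is that after reinstating the prefix these Cartan errors land in $\sum\complex\,\bxur^-(\ul,\up')w_\lambda$ with $|\up'|\eqd|\us|$, hence still in the right-hand side; and that the ``stuff'' from the left $\bu(\mathfrak{sl}_2[t])$-ideal multiplying these errors does not raise the $i$-th length above $\ell_i$. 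Both are true, but they are the crux of your argument and need to be spelled out; the paper avoids this entirely by working at bound $m_i$ from the start and paying the price via Propositions~\ref{prop_fd} and~\ref{prop_bound1}.
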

\subsubsection{} We first prove Proposition~\ref{mainprop}~\eqref{mainpropp2}.
Let $\lien'^\pm$ and $\lieg'$ be the subalgebras of $\lieg$ given by
$$
\lien'^\pm=\bigoplus_{1\leq i\leq j<r} \complex\, x_{i,j}^\pm, \qquad  \lieg'=\lien'^-\oplus\lieh_{r-1}\oplus\lien'^+.$$  Observe that 
 $\lieg'$  is isomorphic to the special linear Lie algebra $\mathfrak{sl}_r$ of rank $r-1$. 
 
Given $\boi\in\bi^r$ and $\lambda\in P^+$, define $\lieg'[t]$--modules,
\begin{align*}{\pW}(\lambda)^{\ge \boi}
=\sum_{\{(\ul,\,\us)\in\bF^r: (\ul,\,|\us|)\ge \boi\}}\bu(\lieg'[t])\,\bxur^-(\ul,\us)\,w_\lambda,\ \ \pW(\lambda)^{> \boi}
=\sum_{\{(\ul,\,\us)\in\bF^r: (\ul,\,|\us|)> \boi\}}\bu(\lieg'[t])\,\bxur^-(\ul,\us)\,w_\lambda.
\end{align*}
Let $\gr^\boi:
\pW(\lambda)^{\ge\boi}\to \pW(\lambda)^{\ge\boi}/\pW(\lambda)^{>\boi}$
be the canonical projection, which is clearly a map of $\lieg'[t]$--modules. 
Using Lemma~\ref{lem1act}~\eqref{lem_act1}, observe that
\begin{align}\label{WGvsNeq}
\pW(\lambda)^{\ge \boi} =\sum_{\{(\ul,\,\us)\in\bF^r:
(\ul,\,|\us|)\ge \boi\}}\bu(\lien'^-[t])\,\bxur^-(\ul,\us) \,w_\lambda,\ \  
\pW(\lambda)^{> \boi} =\sum_{\{(\ul,\,\us)\in\bF^r:
{(\ul,\,|\us|)> \boi}\}}\bu(\lien'^-[t])\,\bxur^-(\ul,\us) \,w_\lambda.
\end{align}

Let $(\ul,\us)\in\bF^r(\lambda)$ and let $\pW(\lseq-\ul)$ be the local Weyl module of $\mathfrak{sl}_r[t]$ with the highest weight $\lseq-\ul$. Using Lemma~\ref{lem1act}~\eqref{lem_act1},  it is easy to see that there exists a $\lieg't]$--modules map from $\pW(\lseq-\ul)$ onto 
$\bu(\lieg'[t])\,\gr^{(\ul,|\us|)} (\bxur^-(\ul,\us)\,w_\lambda)$.
Now the proof of Proposition~\ref{mainprop}~\eqref{mainpropp2} follows from
the proof of \cite[Proposition~2.2.3~(ii)]{CL} by using equation \eqref{WGvsNeq}.

\medskip

The rest of the paper is devoted to proving Proposition~\ref{mainprop}~\eqref{mainpropp1}.
\subsection{} 
We first state some elementary facts about the module $V(\lambda)$.
\begin{lemma}\label{lem_fd} Let $\lambda = \sum_{i=1}^r m_i \omega_i=\sum_{i=1}^r \lambda_i\,\varepsilon_i\in P^+$. Then, we have
\begin{equation*} V(\lambda)=\sum_{\substack{{k_1, \dots, k_r \in \bn}\\{k_1 \le m_1}}}
\bu(\lien^-_{r-1/2})\,(x^-_{1,r})^{k_1}\cdots
(x^-_{r,r})^{k_r}\,v_\lambda.\end{equation*}
\end{lemma}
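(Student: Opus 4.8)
The plan is to prove the reverse inclusion (the forward one being trivial), i.e.\ that every weight vector of $V(\lambda)$ lies in the claimed span. Since $V(\lambda) = \mathbf{U}(\lien^-)\,v_\lambda$ and $\lien^- = \lien^-_{r-1/2} \oplus \bigoplus_{1\le i\le r}\complex x^-_{i,r}$ as vector spaces, the PBW theorem lets me write any element of $\mathbf{U}(\lien^-)$ as a sum of monomials in which all factors $x^-_{i,r}$ (the ``new'' root vectors that are killed when passing to $\lieg_{r-1/2}$) are pushed to the right, past all the $\lien^-_{r-1/2}$-factors, so that $V(\lambda)$ is spanned by elements $u\,(x^-_{1,r})^{k_1}\cdots(x^-_{r,r})^{k_r}\,v_\lambda$ with $u\in\mathbf{U}(\lien^-_{r-1/2})$ and $k_i\in\bN$. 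Here I should first check that the $x^-_{i,r}$, $i\in I$, commute among themselves (they correspond to the roots $\varepsilon_i+\varepsilon_r$ for $i<r$ and $2\varepsilon_r$ for $i=r$, whose pairwise sums are not roots of $\mathfrak{sp}_{2r}$), so the order of the factors $(x^-_{i,r})^{k_i}$ can be fixed arbitrarily; this is where the ordering $k_1,\dots,k_r$ comes from. The only remaining point is the bound $k_1\le m_1$.

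The hard part is establishing $k_1 \le m_1$. The idea is that $x^-_{1,r}$ corresponds to the root $\varepsilon_1+\varepsilon_r$, and $(x^-_{1,r})^{m_1+1}$ should annihilate $v_\lambda$. To see this, I would localize to the rank-$2$ situation: the elements $x^+_{1,r}$, $x^-_{1,r}$, and $\alpha^\vee_{1,r}=[x^+_{1,r},x^-_{1,r}]$ span an $\mathfrak{sl}_2$-triple inside $\lieg$, and $v_\lambda$ is a highest-weight vector for it (indeed $x^+_{1,r}\,v_\lambda = 0$ since $\alpha_{1,r}$ is a positive root and $\lien^+ v_\lambda = 0$) of weight $\langle\lambda,\alpha^\vee_{1,r}\rangle$. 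Because $\alpha_{1,r}=\varepsilon_1+\varepsilon_r$ is a short root, $\alpha^\vee_{1,r}$ pairs with $\lambda=\sum\lambda_i\varepsilon_i$ to give $\lambda_1+\lambda_r$; but since $v_\lambda$ also satisfies $(x^-_{1,1})^{m_1+1}v_\lambda=0$ and lies in the finite-dimensional module $V(\lambda)$, the actual $\mathfrak{sl}_2$-weight one needs to control is not $\lambda_1+\lambda_r$ directly---one must use the finite-dimensionality of $V(\lambda)$ together with the explicit action. Concretely, in a finite-dimensional $\lieg$-module, $(x^-_\beta)^{N}$ kills any highest-weight vector once $N$ exceeds the length of the $\beta$-string through that weight; I would argue that for $\beta=\alpha_{1,r}$ and the highest weight $\lambda$ this length is exactly $m_1 = \lambda_1-\lambda_2$, by exhibiting $x^+_{1,2}$ (or an appropriate raising operator) as a witness that $(x^-_{1,r})^{m_1+1}v_\lambda$, if nonzero, would generate a weight violating dominance, or by reducing via $[x^+_{1,2},x^-_{1,r}] = x^-_{2,r}$ to the already-known relations.

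Once $(x^-_{1,r})^{m_1+1}v_\lambda=0$ is in hand, I combine it with the PBW reduction: any monomial with $k_1\ge m_1+1$ can have its block $(x^-_{1,r})^{k_1}$ slid all the way to the right against $v_\lambda$ (using that $x^-_{1,r}$ commutes with $x^-_{i,r}$ for all $i$, and more care for the $\lien^-_{r-1/2}$-factors---here one uses that moving $x^-_{1,r}$ leftward past a factor of $\lien^-_{r-1/2}$ produces a commutator that is again a product of root vectors for roots obtained by adding $\varepsilon_1+\varepsilon_r$ to a root of $R^-_{r-1/2}$, which stays inside the algebra generated by $\lien^-_{r-1/2}$ and the $x^-_{i,r}$, so an induction on degree closes the argument). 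The main obstacle, as noted, is the sharp vanishing $(x^-_{1,r})^{m_1+1}v_\lambda=0$; everything else is the standard PBW straightening used in \cite[\S3]{CL}.
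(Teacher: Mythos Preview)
Your proposal has a genuine gap: the vanishing $(x^-_{1,r})^{m_1+1}v_\lambda=0$ on which the whole argument rests is \emph{false} in general. The $\mathfrak{sl}_2$-triple $(x^+_{1,r},x^-_{1,r},\alpha^\vee_{1,r})$ acts on $v_\lambda$ with weight $\langle\lambda,\alpha^\vee_{1,r}\rangle=\lambda_1+\lambda_r$ (since $\alpha_{1,r}=\varepsilon_1+\varepsilon_r$ is a short root), so the correct $\mathfrak{sl}_2$ bound is $(x^-_{1,r})^{\lambda_1+\lambda_r+1}v_\lambda=0$, and $\lambda_1+\lambda_r$ can be much larger than $m_1=\lambda_1-\lambda_2$. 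Concretely, take $r=2$ and $\lambda=\omega_2=\varepsilon_1+\varepsilon_2$, so $m_1=0$; then $\langle\lambda,\alpha^\vee_{1,2}\rangle=2$ and $x^-_{1,2}\,v_\lambda\neq0$ (it is a nonzero vector of weight~$0$ in the $5$-dimensional module $V(\omega_2)$). In this example what one must show is not that $x^-_{1,2}\,v_\lambda$ vanishes, but that it can be \emph{rewritten}: indeed $x^-_{1,2}\,v_\lambda$ is a nonzero multiple of $x^-_{1,1}\,x^-_{2,2}\,v_\lambda\in\bu(\lien^-_{3/2})\,x^-_{2,2}\,v_\lambda$, so the $\bu(\lien^-_{r-1/2})$ coefficient absorbs the excess. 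Your string-length and commutator heuristics do not recover this rewriting; the bracket $[x^+_{1,1},x^-_{1,r}]$ is proportional to $x^-_{2,r}$, which only shuffles the exponent from $k_1$ to $k_2$ and never produces a relation that forces $k_1\le m_1$ on its own.

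The paper's proof is quite different and uses external input: Theorem~\ref{theoremofS}\eqref{theoremofSp1} (Shtepin's filtration) tells us that $V(\lambda)$ is spanned by the subspaces $\bu(\lien^-_{r-1/2})\,\pv_\eta$, where each $\pv_\eta$ is a $\lieg_{r-1/2}$-highest weight vector of weight $\eta=\sum\eta_i\varepsilon_i$ with $\lambda_i\ge\eta_i\ge\lambda_{i+1}$. One then expresses $\pv_\eta$ (a specific weight vector) in the PBW form $\sum u\,(x^-_{1,r})^{k_1}\cdots(x^-_{r,r})^{k_r}\,v_\lambda$ with $u\in\bu(\lien^-_{r-1/2})$ homogeneous, and compares the $\varepsilon_1$-coefficient of the weights: every positive root appearing in $\lien^-_{r-1/2}$ and every $\alpha_{i,r}$ with $i\ge2$ has $\varepsilon_1$-coefficient~$\ge0$, while $\alpha_{1,r}$ contributes exactly~$1$, so $\eta_1=\lambda_1-k_1-k$ with $k\ge0$. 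The interlacing condition $\eta_1\ge\lambda_2$ then forces $k_1\le\lambda_1-\lambda_2=m_1$. Thus the bound $k_1\le m_1$ comes not from an $\mathfrak{sl}_2$ vanishing but from the branching constraint encoded in Shtepin's theorem.
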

\begin{proof}
 
 Using Theorem~\ref{theoremofS}~\eqref{theoremofSp1} we see that  $V(\lambda)$ is
spanned by  the sets $\bu(\lien^-_{r-1/2})\,\pv_\eta$, where
$\underline{\eta}=(\eta_1\ge\cdots \ge \eta_r)$ is a sequence of non--negative integers such that
$\lambda_i\ge \eta_i \ge \lambda_{i+1}$, $1\le i \le r$,
and  $\pv_\eta\in V(\lambda)$ satisfies
\begin{equation*} \lien^+_{r-1/2}\,\pv_\eta=0, \qquad
h\,\pv_\eta=\langle\eta, \,h\rangle\pv_\eta, \,\,\forall\,\,h\in\lieh_{r-1}.
\end{equation*}

 Writing $\pv_\eta$ as a $\bu(\lien^-_{r-1/2})$--linear combination of
elements $(x^-_{1,r})^{k_1}\cdots
(x^-_{r,r})^{k_r}\,v_{\lambda}$, $k_1,\cdots ,k_r\in \bn,$ and then by
comparing the coefficients of $\varepsilon_1$ in their weights, we obtain $\eta_1=\lambda_1-k_1-k$ for some $k\geq 0$. Since $\lambda_1\ge\eta_1\ge\lambda_2$, we get $k_1\le m_1$.
\end{proof}

For $1\le m\le n\le r$, we define the subalgebras $\lien_{m,n}^\pm, \lieh_{m,n}$, and $\lien_{m,n-1/2}^\pm$ of $\lieg$ as follows:
 \begin{gather*}
 \lien_{m,n}^\pm=\bigoplus_{m\leq i \leq j < n} \complex \,x_{i,j}^{\pm}\bigoplus_{m\leq i \leq j \leq n} \complex \,x_{i,\overline{j}}^{\pm},
  \quad\qquad\lieh_{m,n}=\bigoplus_{m\le i\le n}\complex\, \alpha_{i,i}^\vee,\\
 \lien_{m,n-1/2}^+=\bigoplus_{m\leq i \leq j < n-1} \complex\, x_{i,j}^{+}\bigoplus_{m\leq i \leq j \leq n} \complex\, x_{i,\overline{j}}^{+}, \qquad\quad \mathfrak{n}^-_{m, n-1/2}=\bigoplus_{m\leq i \leq j < n} \left(\complex\, x_{i,j}^{-}\oplus\complex\, x_{i,\overline{j}}^{-}\right).
\end{gather*}
For $1\le i\le r$, set $\lien_i^\pm=\lien_{1,i}^\pm$ and $\lien_{i-1/2}^\pm=\lien_{1,i-1/2}^\pm$.
Set
$\lieg_{m,n} = \lien_{m,n}^- \oplus \lieh_{m,n} \oplus \lien_{m,n}^+$ and $\lieg_{m,n-1/2} = \lien_{m,n-1/2}^- \oplus \lieh_{m,n-1} \oplus \lien_{m,n-1/2}^+$.
Clearly $\lieg_{m,n}$ is isomorphic to $\mathfrak{sp}_{2(n-m+1)}$  and $\lieg_{m,n-1/2}$ is isomorphic to ${\mathfrak{sp}}_{2(n-m)+1}$.

We can now prove the following stronger statement.
\begin{proposition}\label{prop_fd}
Let $\lambda = \sum_{i=1}^r m_i \omega_i\in P^+$. Then, we have
\begin{equation} V(\lambda)=\sum_{\{k_i: 0\le k_i\le m_i, 1\le i\le r\}}
\bu(\lien^-_{r-1/2})\,(x^-_{1,r})^{k_1}\cdots
(x^-_{r,r})^{k_r}\,v_\lambda.\end{equation}
\end{proposition}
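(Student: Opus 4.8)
The plan is to argue by induction on the rank $r$, using Lemma~\ref{lem_fd} to control the exponent $k_1$ of $x^-_{1,r}$, and the inductive hypothesis — applied to the rank-$(r-1)$ subalgebra $\lieg_{2,r}\cong\mathfrak{sp}_{2(r-1)}$ — to control $k_2,\dots,k_r$. The base case $r=1$ is clear: then $\lien^-_{1/2}=0$ and $V(m_1\omega_1)$ is the $(m_1+1)$-dimensional $\mathfrak{sl}_2=\mathfrak{sp}_2$-module, which is spanned by $(x^-_{1,1})^{k}\,v_\lambda$ with $0\le k\le m_1$. For the inductive step it clearly suffices to prove the containment $V(\lambda)\subseteq\sum_{\{0\le k_i\le m_i\}}\bu(\lien^-_{r-1/2})\,(x^-_{1,r})^{k_1}\cdots(x^-_{r,r})^{k_r}\,v_\lambda$, the reverse being obvious.

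So assume $r\ge2$ and that the statement holds in rank $r-1$. By Lemma~\ref{lem_fd},
$$V(\lambda)=\sum_{\substack{0\le k_1\le m_1\\ k_2,\dots,k_r\ge0}}\bu(\lien^-_{r-1/2})\,(x^-_{1,r})^{k_1}(x^-_{2,r})^{k_2}\cdots(x^-_{r,r})^{k_r}\,v_\lambda .$$
Since $v_\lambda$ is a highest weight vector, the $\lieg_{2,r}$-submodule $\bu(\lien^-_{2,r})\,v_\lambda$ of $V(\lambda)$ is a finite-dimensional module generated by a highest weight vector, hence is irreducible, with highest weight whose $\omega$-coordinates are $m_2,\dots,m_r$. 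As $x^-_{i,r}=x^-_{i,\overline r}\in\lien^-_{2,r}$ for $2\le i\le r$, and as $\lien^-_{2,r-1/2}$ plays the role of $\lien^-_{r-1/2}$ for $\lieg_{2,r}$ under the natural relabelling of the indices $2,\dots,r$, the inductive hypothesis yields
$$(x^-_{2,r})^{k_2}\cdots(x^-_{r,r})^{k_r}\,v_\lambda\ \in\ \sum_{\{0\le k_i'\le m_i,\ 2\le i\le r\}}\bu(\lien^-_{2,r-1/2})\,(x^-_{2,r})^{k_2'}\cdots(x^-_{r,r})^{k_r'}\,v_\lambda .$$
Substituting this into the previous display, $V(\lambda)$ is spanned by the elements $\bu(\lien^-_{r-1/2})\,(x^-_{1,r})^{k_1}\,\bu(\lien^-_{2,r-1/2})\,(x^-_{2,r})^{k_2'}\cdots(x^-_{r,r})^{k_r'}\,v_\lambda$ with $0\le k_1\le m_1$ and $0\le k_i'\le m_i$ for $2\le i\le r$.

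The remaining, and most delicate, step is to push $(x^-_{1,r})^{k_1}$ back to the left past the factor $\bu(\lien^-_{2,r-1/2})$ without enlarging its exponent. A direct computation in the root spaces shows that $[\,\lien^-_{r-1/2},\,x^-_{1,r}\,]\subseteq\bigoplus_{1\le i<r}\complex\,x^-_{1,\overline i}\subseteq\lien^-_{r-1/2}$, so that $\lien^-_{r-1/2}\oplus\complex\,x^-_{1,r}$ is a Lie subalgebra of $\lien^-$ in which $\lien^-_{r-1/2}$ is an ideal; moreover every $x^-_{1,\overline i}$ arising in these brackets commutes with $x^-_{1,r}$. Hence, reordering a product $(x^-_{1,r})^{k_1}Y$ with $Y\in\bu(\lien^-_{r-1/2})$ into PBW normal form (factors of $\lien^-_{r-1/2}$ first, powers of $x^-_{1,r}$ last) can only lower the power of $x^-_{1,r}$, since each commutator that appears consumes one such factor; that is,
$$(x^-_{1,r})^{k_1}\,\bu(\lien^-_{r-1/2})\ \subseteq\ \sum_{j=0}^{k_1}\bu(\lien^-_{r-1/2})\,(x^-_{1,r})^{j}.$$
Because $\bu(\lien^-_{2,r-1/2})\subseteq\bu(\lien^-_{r-1/2})$, substituting this bound into the spanning set just obtained places $V(\lambda)$ inside $\sum_{\{0\le l_i\le m_i,\ 1\le i\le r\}}\bu(\lien^-_{r-1/2})(x^-_{1,r})^{l_1}\cdots(x^-_{r,r})^{l_r}v_\lambda$ (with $l_1=j\le m_1$ and $l_i=k_i'\le m_i$), which is exactly the required containment, closing the induction. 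I expect this last step to be the main obstacle: one must ensure that re-introducing the $\bu(\lien^-_{2,r-1/2})$ factor to the left of a bounded monomial does not inflate any exponent, and this rests precisely on the type-$C$ relation $[\lien^-_{r-1/2},x^-_{1,r}]\subseteq\lien^-_{r-1/2}$, which one verifies by noting that $x^-_{1,r}=x^-_{1,\overline r}$ has root $\varepsilon_1+\varepsilon_r$ and computing its brackets with the root vectors spanning $\lien^-_{r-1/2}$.
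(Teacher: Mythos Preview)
Your proof is correct and follows essentially the same route as the paper: induction on $r$, Lemma~\ref{lem_fd} to bound $k_1$, the inductive hypothesis applied to $\bu(\lieg_{2,r})\,v_\lambda\cong V(\lambda|_{\lieh_{2,r}})$ to bound $k_2,\dots,k_r$, and then the commutation $[x^-_{1,r},\lien^-_{r-1/2}]\subset\lien^-_{r-1/2}$ to absorb the intermediate $\bu(\lien^-_{2,r-1/2})$ factor. Your extra observation that the commutators $x^-_{1,\overline i}$ themselves commute with $x^-_{1,r}$ is true but not needed: the inclusion $[x^-_{1,r},\lien^-_{r-1/2}]\subset\lien^-_{r-1/2}$ alone already gives $(x^-_{1,r})^{k_1}Y=\sum_{j=0}^{k_1}\binom{k_1}{j}(\ad x^-_{1,r})^{j}(Y)\,(x^-_{1,r})^{k_1-j}\in\sum_{j\le k_1}\bu(\lien^-_{r-1/2})(x^-_{1,r})^{j}$.
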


\begin{proof}
We proceed by induction on $r$. For $r=1$, the
result follows from the defining relations of $V(\lambda)$. Note
that $\bu(\lieg_{2,r})\,v_\lambda\cong V(\lambda|_{\lieh_{2,r}})$
as $ \lieg_{2,r}$--modules, and hence we have by the induction
hypothesis
\begin{equation*}
\bu(\lien^-_{2,r})\,v_\lambda=\sum_{\{k_i: 0\le k_i\le m_i,  2\le
i\le r\}} \bu(\lien^-_{2,r-1/2})\, (x^-_{2,r})^{k_2}\cdots
(x^-_{r,r})^{k_r}\,v_\lambda.
\end{equation*}
Combining this  with Lemma~\ref{lem_fd}, we find that
$$V(\lambda)=\sum_{\{k_i: 0\le k_i\le m_i, 1\le i\le r\}} \bu(\lien^-_{r-1/2})\, (x^-_{1,r})^{k_1} \bu(\lien^-_{2,r-1/2})\,
(x^-_{2,r})^{k_2}\cdots (x^-_{r,r})^{k_r}\,v_\lambda.$$  
Since $\lien^-_{2,r-1/2} \subset \lien^-_{r-1/2}$ and $[x^-_{1,r},\,\lien^-_{r-1/2}]\subset\lien^-_{r-1/2}$, the proposition follows.
\end{proof}

\subsection{}  We now recall the projection map $\bpr$ from \cite[\S 3.1.1]{CL}. 
Let
$\bpr: \bu(\lieg[t])\to\bu(\lien^-[t])$ be the projection
corresponding to the vector space decomposition $$\bu(\lieg[t])=\bu(\lien^-[t])\oplus \bu(\lieg[t])(\lieb^+[t]),$$ given
by the Poincare--Birkhoff--Witt theorem. Clearly $\bpr$ is a
$\bn$--graded linear map. 

The next result is immediate from the definition.
\begin{proposition}\label{proppr} 
\begin{enumerate}
\item \label{proppr3} For all $g_1, \, g_2 \in  \bu(\lieg[t])$, we have $\bpr(g_1\, g_2) = \bpr(g_1\,
\bpr(g_2))$.
\item\label{prl} Let $\lambda\in P^+$. Then for all
$x \in \bu(\lien^-[t] \oplus \lieb^+ t[t])$, we have $x\,w_\lambda
= \bpr(x) \,w_\lambda$ in $W(\lambda)$.\end{enumerate}
\end{proposition}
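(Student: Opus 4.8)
The plan is to derive both parts from a single observation: the kernel of $\bpr$ is exactly the left ideal $\bu(\lieg[t])\,\lieb^+[t]$ of $\bu(\lieg[t])$. Indeed, applying the Poincar\'e--Birkhoff--Witt theorem to $\lieg[t]=\lien^-[t]\oplus\lieb^+[t]$, multiplication gives a linear isomorphism $\bu(\lien^-[t])\otimes\bu(\lieb^+[t])\to\bu(\lieg[t])$, so $\bu(\lieg[t])=\bu(\lien^-[t])\bu(\lieb^+[t])=\bu(\lien^-[t])\oplus\bu(\lien^-[t])\,\bu(\lieb^+[t])^+$, where $\bu(\lieb^+[t])^+=\bu(\lieb^+[t])\,\lieb^+[t]$ is the augmentation ideal. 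Since $\bu(\lien^-[t])\,\bu(\lieb^+[t])^+=\bu(\lien^-[t])\bu(\lieb^+[t])\,\lieb^+[t]=\bu(\lieg[t])\,\lieb^+[t]$, this is precisely the complement of $\bu(\lien^-[t])$ used to define $\bpr$, so $\ker\bpr=\bu(\lieg[t])\,\lieb^+[t]$. As $\bu(\lieg[t])\,\bu(\lieg[t])=\bu(\lieg[t])$, this subspace is stable under left multiplication, i.e.\ it is a left ideal.

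For part~\eqref{proppr3}, I would write $g_2=\bpr(g_2)+z$ with $z\in\ker\bpr=\bu(\lieg[t])\,\lieb^+[t]$. Then $g_1g_2-g_1\,\bpr(g_2)=g_1z$ lies in the left ideal $\bu(\lieg[t])\,\lieb^+[t]=\ker\bpr$, and applying the linear map $\bpr$ gives $\bpr(g_1g_2)=\bpr(g_1\,\bpr(g_2))$.

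For part~\eqref{prl}, first note that $\lien^-[t]\oplus\lieb^+t[t]$ is a Lie subalgebra of $\lieg[t]$: the only bracket requiring a check, $[\lien^-[t],\lieb^+t[t]]$, lands in $\lieg\otimes t\complex[t]=(\lien^-\otimes t\complex[t])\oplus(\lieb^+\otimes t\complex[t])\subseteq\lien^-[t]\oplus\lieb^+t[t]$. By PBW for this subalgebra, any $x\in\bu(\lien^-[t]\oplus\lieb^+t[t])$ can be expanded as $x=\sum_i x_i^-\,y_i$ with $x_i^-\in\bu(\lien^-[t])$ and $y_i\in\bu(\lieb^+t[t])$. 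Write $y_i=c_i+z_i$ with $c_i\in\complex$ the augmentation (constant term) and $z_i\in\bu(\lieb^+t[t])^+=\bu(\lieb^+t[t])\,\lieb^+t[t]\subseteq\bu(\lieg[t])\,\lieb^+[t]=\ker\bpr$, so $\bpr(y_i)=c_i$. Using part~\eqref{proppr3} and $\bpr(x_i^-)=x_i^-$, one gets $\bpr(x_i^-y_i)=\bpr(x_i^-\,\bpr(y_i))=c_i\,x_i^-$, hence $\bpr(x)=\sum_i c_i\,x_i^-$. On the other hand, the defining relations~\eqref{weyldefreln} give $\lien^+[t]\,w_\lambda=0$ and $(\lieh\otimes t^s)\,w_\lambda=0$ for $s\geq 1$, so $\lieb^+t[t]\,w_\lambda=0$ and therefore $\bu(\lieb^+t[t])^+\,w_\lambda=0$; thus $z_i\,w_\lambda=0$ and $y_i\,w_\lambda=c_i\,w_\lambda$. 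Consequently $x\,w_\lambda=\sum_i x_i^-\,y_i\,w_\lambda=\sum_i c_i\,x_i^-\,w_\lambda=\bpr(x)\,w_\lambda$.

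There is no real obstacle; the argument is bookkeeping with PBW and the augmentation ideal, which is why the paper records it as immediate. The only points needing a moment's care are identifying $\ker\bpr$ with the left ideal $\bu(\lieg[t])\,\lieb^+[t]$ (so that it is stable under left multiplication) and checking that $\lien^-[t]\oplus\lieb^+t[t]$ is a subalgebra so that PBW applies to it; everything else then follows formally.
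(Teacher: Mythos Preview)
Your proof is correct and is precisely the natural unpacking of what the paper records as ``immediate from the definition'': the paper gives no argument beyond that phrase, and your identification of $\ker\bpr$ with the left ideal $\bu(\lieg[t])\,\lieb^+[t]$ together with the PBW decomposition of $\bu(\lien^-[t]\oplus\lieb^+t[t])$ is exactly the standard verification. There is nothing to add or correct.
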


\subsubsection{} Let $\bF_+$ denote the subset of $\bF$ consisting of
pairs $(\ell,\bos)\in\bF$ such that  $\bos(i)>0$ for all $1 \le i \le
\ell$, together with   the pair $(0,\emptyset)$. 

\begin{lemma}\label{lem_dif}  Let $(\ul^+, \us^+)\in \bF^r_+$ with $\ul^+\neq\underline{0}$ and set $i_0:= \textup{max} 
\{i: \ell^+_i\neq0\}.$
For all $(\ul,\us)\in
\bF^r$ with $\ell_i\ge \ell_i^+$, $1\le i\le r$, we have
\begin{align*}\label{eq_dif}
&\bpr\left(\bxur^+ (\ul^+, \us^+) \,\bxur^- (\ul, \us)\right) -
\prod_{i=1}^r \bpr \left( \bx^+_{i,r} (\ell^+_i, \bos^+_i)\,
\bx^-_{i,r} (\ell_i, \bos_i)\right)\\& \in \sum_{(\um,\,|\up|)>(\ul-\ul^+, \,|\us| + |\us^+|)}
\bu(\lien_{i_0-1/2}^-[t])\,\bxur^- (\um,\up)
\end{align*}
In particular, $$\bpr\left(\bxur^+ (\ul^+, \us^+)\, \bxur^- (\ul,
\us)\right)  \in \sum_{(\um,\,|\up|)\ge(\ul-\ul^+,\, |\us| + |\us^+|)}
\bu(\lien_{i_0-1/2}^-[t])\,\bxur^- (\um,\up).$$
\end{lemma}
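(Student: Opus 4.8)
The plan is to induct on $\ell^+_{i_0}$, the number of ``positive-power'' raising operators in the last nonzero slot $i_0$, peeling them off one at a time. When $\ul^+ = \underline{0}$ both terms on the left are $\bxur^-(\ul,\us)$ and the statement is vacuous, so assume $\ul^+ \neq \underline{0}$. Write $\bxur^+(\ul^+,\us^+)$ in the order $\bx^+_{r,r}(\ell^+_r,\boq_r)\cdots\bx^+_{1,r}(\ell^+_1,\boq_1)$ as in the proof of Lemma~\ref{lem1act}~\eqref{lem_act3}; since $\ell^+_i = 0$ for $i > i_0$, the leftmost genuine factor is $\bx^+_{i_0,r}(\ell^+_{i_0},\boq_{i_0})$, whose leftmost letter is $x^+_{i_0,r}\otimes t^{s}$ for $s = \bos^+_{i_0}(\ell^+_{i_0}) > 0$. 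First I would move this single operator $x^+_{i_0,r}\otimes t^{s}$ to the right through $\bxur^-(\ul,\us)$ using the commutation relations in Lemma~\ref{lem1act}~\eqref{lem_act2}: the bracket $[x^+_{i_0,r}, x^-_{k,r}]$ equals $x^+_{i_0,k-1}$ for $i_0 < k$, equals $\alpha^\vee_{i_0,r}$ for $i_0 = k$, and equals $x^-_{k,i_0-1}$ for $i_0 > k$; the first of these then commutes past all remaining $x^-_{m,r}$ with $m \geq k$, while $\delta_{i_0,r}$-terms $x^-_{m,\overline{k}}$ appear only when $i_0 = r$. The key point is that after applying $\bpr$ (Proposition~\ref{proppr}), all terms in which $x^+_{i_0,r}\otimes t^s$ hits a strictly smaller index $k < i_0$ produce factors lying in $\bu(\lien^-_{i_0-1/2}[t])$ — the operators $x^-_{k,i_0-1}$ (and $x^-_{m,\overline k}$) have indices bounded by $i_0 - 1$ — and either leave $\ul$ unchanged on the first $i_0-1$ coordinates or strictly increase the order via a lower-index $x^-$, so they land in the error sum $\sum_{(\um,|\up|)>(\ul - \ul^+,\,|\us|+|\us^+|)}\bu(\lien^-_{i_0-1/2}[t])\,\bxur^-(\um,\up)$ once the remaining raising operators are dealt with.

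The surviving ``diagonal'' contribution is the one where $x^+_{i_0,r}\otimes t^s$ pairs with an $x^-_{i_0,r}\otimes t^{\bos_{i_0}(p)}$ in the $i_0$-th block of $\bxur^-(\ul,\us)$, contributing a Cartan element $\alpha^\vee_{i_0,r}\otimes t^{s+\bos_{i_0}(p)}$; pushing that Cartan element further right and absorbing it (its brackets with $x^-$ stay in $\lien^-$, and it annihilates nothing until it meets $w_\lambda$'s relations, but here we are working purely in $\bu(\lien^-[t])$ before applying to $w_\lambda$) reproduces, modulo higher-order terms in the total order $>$, the contribution of $\bpr(\bx^+_{i_0,r}(\ell^+_{i_0},\boq_{i_0})\,\bx^-_{i_0,r}(\ell_{i_0},\bos_{i_0}))$ to the product on the second line of the display. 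Since $[x^+_{i_0,r},x^-_{i,r}] = 0$ for $i > i_0$ and $x^+_{i_0,r}$ lowers the $i_0$-slot count by exactly one while possibly raising strictly-lower slots, the total order is respected: the remaining operators $\bx^+_{i_0,r}(\ell^+_{i_0}-1,\ldots)\cdots\bx^+_{1,r}(\ell^+_1,\ldots)$ now act on a monomial whose $i_0$-slot exponent has dropped by one, so the induction hypothesis applies to them, and the ``$\max$'' index $i_0' := \max\{i : \ell^+_i \neq 0\}$ of the reduced tuple satisfies $i_0' \leq i_0$, keeping all error terms inside $\bu(\lien^-_{i_0-1/2}[t])$. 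Using Proposition~\ref{proppr}~\eqref{proppr3} to commute $\bpr$ with the factorization closes the induction. The ``In particular'' clause is then immediate: the difference lies in the strictly-larger part of the sum, and the product $\prod_i \bpr(\cdots)$ is visibly of the stated form with $(\um,|\up|)$ equal to $(\ul - \ul^+, |\us|+|\us^+|)$ on its leading term.

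The main obstacle I anticipate is \emph{bookkeeping the total order precisely} through the repeated commutations — specifically, verifying that every stray term generated when $x^+_{i_0,r}\otimes t^s$ lands on an index $k < i_0$ (producing $x^-_{k,i_0-1}$, or $x^-_{m,\overline{k}}$ when $i_0 = r$) genuinely increases $(\um,|\up|)$ in the order $(\um,\ud)>(\ul,\ue)$, and that reinserting these new $\lien^-$-factors on the left does not disturb the leading term's membership in $\bu(\lien^-_{i_0-1/2}[t])$. This is essentially the type-$C$ analogue of the argument behind \cite[Lemma~3.1.2]{CL}, and the extra short-root generators $x^\pm_{i,\overline j}$ are the only genuinely new feature; they enter only through the $\delta_{i_0,r}$ terms, which is why the bound $i_0 - 1/2$ rather than $i_0 - 1$ is needed. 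Once the order-tracking is set up, each inductive step is a routine application of Lemma~\ref{lem1act}.
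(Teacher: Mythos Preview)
Your overall strategy is the right one and matches the paper: induct on the total size $|\ul^+|$, with base case a single factor $x^+_{i_0,r}\otimes t^s$, and in the inductive step factor $\bxur^+(\ul^+,\us^+)$ as that single factor times $\bxur^+(\um,\up)$ with $\um=\ul^+-\mathbf{e}_{i_0}$, then use $\bpr(AB)=\bpr(A\,\bpr(B))$. However, the \emph{order} in which you apply the base case and the induction hypothesis is reversed relative to the paper, and this matters.

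You describe moving the single $x^+_{i_0,r}\otimes t^s$ through $\bxur^-(\ul,\us)$ first, producing error terms with prefactors in $\bu(\lien^-_{i_0-1/2}[t])$, and then letting ``the remaining operators $\bx^+_{i_0,r}(\ell^+_{i_0}-1,\ldots)\cdots\bx^+_{1,r}(\ell^+_1,\ldots)$'' act. The problem is that these remaining raising operators include factors $x^+_{i,r}$ with $i<i_0$, and these do \emph{not} commute with $\lien^-_{i_0-1/2}$: for instance $[x^+_{i,r},x^-_{i,j}]$ is a nonzero multiple of $x^+_{j+1,r}$ for $j<i_0-1$. So you cannot simply slide the remaining $\bxur^+$ past the $\bu(\lien^-_{i_0-1/2}[t])$ prefactors to invoke the induction hypothesis on $\bxur^-(\um,\up)$; dealing with the resulting commutators would require something like Lemma~\ref{lem_main} and extra order-tracking. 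The paper avoids this entirely by applying the induction hypothesis \emph{first} to $\bpr(\bxur^+(\um,\up)\,\bxur^-(\ul,\us))$, obtaining terms with prefactors in $\bu(\lien^-_{i'_0-1/2}[t])$ where $i'_0\le i_0$, and only \emph{then} acting with the single $x^+_{i_0,r}\otimes t^s$. The point is that the peeled-off factor has the \emph{maximal} index, so $[x^+_{i_0,r},\,\lien^-_{i'_0-1/2}]=0$ for $i'_0\le i_0$, and the base case applies directly.

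One further correction: your claim that $[x^+_{i_0,r},x^-_{i,r}]=0$ for $i>i_0$ is false; the bracket equals $x^+_{i_0,i-1}$. What is true (and what the paper uses) is that this $x^+_{i_0,i-1}$ commutes with all $x^-_{m,r}$ for $m\ge i$, so after pushing it to the right end it is killed by $\bpr$.
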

\begin{proof}
The second statement follows from the first by using the observation (see \cite[Proposition 3.1.4]{CL}) that  the element
$\bpr\left(\bx^+_{i,r}(\ell_i^+,\bos_i^+)\,\bx^-_{i,r}(\ell_i,\bos_i)\right)$, for $1\le i\le r$,
is a linear combination of elements of the form
$\bx^-_{i,r}(\ell_i-\ell_i^+,\bop_i)$, where $\bop_i$ satisfies the
condition  $|\bop_i| = |\bos_i|+ |\bos_i^+|$.

We now prove the first statement by induction on $|\ul^+| =
\ell^+_1 + \dots +\ell^+_r$. In the case $|\ul^+| =1$, we have $\bxur^+ (\ul^+, \us^+) = x_{i_0,r}^+ \otimes t^s$ for some $s>0$. Observe that
\begin{eqnarray*}&\bpr\left( (x_{i_0,r}^+ \otimes t^s)  \,\bxur^-(\ul, \us)
\right) &= \bpr\left([x_{i_0,r}^+ \otimes t^s, \,\bxur^-(\ul,
\us)]\right)\\ && =\sum_{j=1}^r\bpr\left(g_j \,[x_{i_0,r}^+ \otimes
t^s, \,\bx^-_{j,r} (\ell_j, \bos_j)]\,g_j'\right),\end{eqnarray*}
where $$g_j=\bx^-_{1,r}(\ell_1,\bos_1)\cdots\bx^-_{j-1,r}(\ell_{j-1},\bos_{j-1})\qquad\textup{and}\qquad
 g_j'=\bx^-_{j+1,r}(\ell_{j+1},\bos_{j+1})\cdots\bx^-_{r,r}(\ell_{r},\bos_{r}).$$
As in the proof of the case  $|\ul^+| =1$ in \cite[Lemma 3.5.1]{CL}; we get
$\bpr\left(g_j \,[x_{i_0,r}^+ \otimes t^s, \,\bx^-_{j,r} (\ell_j, \bos_j)]\,g_j'\right)=0,
$ when $i_0<j$, and, in the case $i_0=j$, it is a linear combination of elements of the form
$$g_{i_0} \,\bpr ( (x_{i_0,r}^+\otimes t^s)\,\bx^-_{i_0,r} (\ell_{i_0}, \bos_{i_0}))\,g'_{i_0}\qquad\textup{and}\qquad 
\bxur^-(\ul-\mathbf{e}_{i_0}, \up)\quad\textup{with}\quad |\up|\eqd |\us|+s\mathbf{e}_{i_0}.$$
We now consider the case when $i_0>j$. In this case, we have
$$[x^+_{i_0,r}, \,x^-_{j,r}]=x^-_{j,i_0-1}, \qquad 
[x^-_{p,r}, \,x^-_{j,i_0-1}]=\delta_{i_0,r}\,x^-_{p,\overline{j}}, \,\,\forall\,\,p\leq j, 
\qquad\textup{and}\qquad  [x^-_{q,r}, \,x^-_{p,\overline{j}}]=0,\,\,\forall\,\,q\leq p.
$$
This implies that $\bpr\left(g_j\, [x_{i_0,r}^+ \otimes t^s,\, \bx^-_{j,r} (\ell_j, \bos_j)]\,g_j'\right)$ is equal to
\begin{align*}
&g_j\sum_{m=1}^{\ell_j}\left(\prod_{n=1}^{m-1} x^-_{j,r}\otimes t^{s_j(n)}\right)(x^-_{j,i_0-1}\otimes t^{s+s_j(m)})
\left(\prod_{n=m+1}^{\ell_j}x^-_{j,r}\otimes t^{s_j(n)}\right)
 g_{j'}\\
&=\sum_{m=1}^{\ell_j}\left((x^-_{j,i_0-1}\otimes t^{s+s_j(m)})\,\bxur^-(\um,\up)
+\delta_{i_0,r}\,\sum_{p=1}^j\sum_{m'=1}^{\ell_p} (x^-_{p,\overline{j}}\otimes t^{s+s_j(m)+s_p(m')})\,
\bxur^-(\un,\uq)\right),
\end{align*}
for some $(\um,\up),(\un,\uq)\in \bF^r$ with $\um =\ul-\mathbf{e}_j$ 
and $\un=\ul-\mathbf{e}_j-\mathbf{e}_p$. 
Since $1\leq p\leq j<i_0$, it is clear that $\um,\un\eql \ul-\mathbf{e}_{i_0}$ and 
$x^-_{j,i_0-1}, x^-_{p,\overline{j}}\in \lien^-_{i_0-1/2}$.

For the inductive step, we write 
$\bxur^+ (\ul^+, \us^+)=(x^+_{i_0,r}\otimes t^s)\,\bxur^+ (\um,\up)$ for some $(\um,\up)\in \bF_+^r$ and $s>0$ such that $\um=\ul^+-\mathbf{e}_{i_0}$ and $|\up|=|\us^+|-s$.
Using Proposition~\ref{proppr}~\eqref{proppr3}, we have
\begin{align*}\bpr\left((x^+_{i_0,r}\otimes t^s)\,\bxur^+ (\um,\up)\,
\bxur^-(\ul, \us)\right) &= \bpr\left((x^+_{i_0,r}\otimes
t^s)\,\bpr\left(\bxur^+ (\um,\up) \,\bxur^-(\ul, \us)\right)
\right).
\end{align*}
The result now follows by first using the induction hypothesis for $\bpr\left(\bxur^+ (\um,\up) \,\bxur^-(\ul, \us)\right)
$ and then since $[x^+_{i_0,r}, \lien^-_{i_0'-1/2}]=0,\,\,\forall\,\,i_0'\leq i_0$, the result when $|\ul^+|=1$.
 \end{proof}

\begin{lemma}\label{lem_main}
 Let $(\ul, \us) \in \bF^r_+$ and let $g\in\bu(\lien^{-}_{r-1/2}[t])_{-\eta}, \,\eta\neq0$. Then, $[\bxur^+ (\ul, \us),\, g]$ is a linear combination of elements
of the form $g' \,\bxur^+ (\um,\up)$, where  $g' \in \bu(\lien^-_{r-1/2}[t])$, $(\um,\up) \in \bF^r_+$. Moreover, if $g'$ is a constant then $|\up| \eqd |\us|.$
\end{lemma}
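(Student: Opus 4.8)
The plan is to induct on the number $p$ of root--vector factors of $g$. Since the assertion is linear in $g$, the Poincar\'e--Birkhoff--Witt theorem lets us assume $g=g_1\cdots g_p$ is an ordered monomial with each $g_c=y_c\otimes t^{s_c}$, $y_c$ a root vector of $\lien^-_{r-1/2}$; and $p\geq 1$ because $\eta\neq 0$. Three elementary matrix computations are the tools: (i) for $1\leq k\leq j<r$ and any $i$ one has $[x^+_{i,r},x^-_{k,j}]=c\,\delta_{i,k}\,x^+_{j+1,r}$ for a nonzero scalar $c$ (reading $x^+_{r,r}$ for $x^+_{j+1,r}$ when $j+1=r$); (ii) for $1\leq k\leq j<r$ and any $i$, the bracket $[x^+_{i,r},x^-_{k,\bar j}]$ is a scalar multiple of a single negative root vector $x^-_{m,r-1}$ with $m<r$, hence lies in $\lien^-_{r-1/2}$; (iii) all the elements $x^+_{i,r}\otimes t^a$ commute with one another, so any product of them can be rearranged into the shape $\bxur^+(\um,\up)$, and this pair lies in $\bF^r_+$ precisely when every exponent $a$ occurring is positive.

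For the base case $p=1$, write $\bxur^+(\ul,\us)=f_1\cdots f_N$ with $f_b=x^+_{i_b,r}\otimes t^{a_b}$ and expand $[\bxur^+(\ul,\us),g_1]=\sum_b f_1\cdots f_{b-1}\,([x^+_{i_b,r},y_1]\otimes t^{a_b+s_1})\,f_{b+1}\cdots f_N$. If $y_1=x^-_{k,j}$ with $j<r$, then by (i) only the $b$ with $i_b=k$ survive; pushing the commuting element $x^+_{j+1,r}\otimes t^{a_b+s_1}$ into block $j+1$ using (iii) produces terms $c\,\bxur^+(\um,\up)$ with $\um=\ul-\mathbf{e}_k+\mathbf{e}_{j+1}$, the positive entry $a_b$ removed from $\bos_k$ and the positive entry $a_b+s_1$ inserted into $\bos_{j+1}$. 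Here $g'=c$ is a constant, $(\um,\up)\in\bF^r_+$ since $(\ul,\us)\in\bF^r_+$, and $|\bop_{j+1}|=|\bos_{j+1}|+a_b+s_1>|\bos_{j+1}|$ while all higher blocks are unchanged, so $|\up|\eqd|\us|$. If $y_1=x^-_{k,\bar j}$ with $j<r$, then by (ii) each surviving summand is $f_1\cdots f_{b-1}\,(z_b\otimes t^{a_b+s_1})\,f_{b+1}\cdots f_N$ with $z_b$ a scalar multiple of some $x^-_{m,r-1}$; moving $z_b\otimes t^{a_b+s_1}$ to the far left, the brackets $[x^+_{i_{b'},r},z_b]$ that appear are by (i) scalar multiples of $x^+_{r,r}$, and (iii) reorganizes the outcome into terms that are either $(z_b\otimes t^{a_b+s_1})\,\bxur^+(\um,\up)$ (with non--constant coefficient) or $(\text{scalar})\cdot\bxur^+(\um',\up')$, in which one $x^+_{r,r}$--factor with positive exponent $a_{b'}+a_b+s_1$ has been created and two factors from blocks $<r$ removed. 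In either case the pair lies in $\bF^r_+$, and for the constant terms $|\bop'_r|>|\bos_r|$, so $|\up'|\eqd|\us|$.

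For the inductive step $p\geq 2$, put $g=g_1g''$ with $g''=g_2\cdots g_p$ (still of nonzero weight) and use $[\bxur^+(\ul,\us),g]=[\bxur^+(\ul,\us),g_1]\,g''+g_1\,[\bxur^+(\ul,\us),g'']$. In the first summand apply the base case to get $[\bxur^+(\ul,\us),g_1]=\sum_\gamma h_\gamma\,\bxur^+(\um_\gamma,\up_\gamma)$ with each $h_\gamma$ a constant or a single root vector and each $(\um_\gamma,\up_\gamma)\in\bF^r_+$; then $\bxur^+(\um_\gamma,\up_\gamma)\,g''=g''\,\bxur^+(\um_\gamma,\up_\gamma)+[\bxur^+(\um_\gamma,\up_\gamma),g'']$, and the induction hypothesis for $g''$ applies to the last bracket precisely because $(\um_\gamma,\up_\gamma)\in\bF^r_+$. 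In the second summand the induction hypothesis for $g''$ applies directly. Every term so obtained has the form $g'\,\bxur^+(\cdot,\cdot)$ with $g'$ a product of constants, single root vectors, $g''$, and outputs of the induction hypothesis, all lying in $\bu(\lien^-_{r-1/2}[t])$, and the second coordinate in $\bF^r_+$. For the final clause: any factor $g''$, $g_1$, or $z_b\otimes t^{\cdots}$ makes $g'$ non--constant, so a constant $g'$ must be a constant $h_\gamma$ (from the base case) times a constant output $k$ of the induction hypothesis applied to $g''$; then $|\up|\eqd|\up_\gamma|$ and $|\up_\gamma|\eqd|\us|$, whence $|\up|\eqd|\us|$ by transitivity of $\eqd$.

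The delicate point is the strictness of $|\up|\eqd|\us|$ in the second case of the base step and its survival through the induction: this is exactly where $(\ul,\us)\in\bF^r_+$ is used, since positivity of the exponents forces the deletion of a factor to strictly lower the total of its block, so that the block into which a new exponent is inserted strictly dominates. Without it, one could strip an $x^+_{k,r}\otimes t^0$ and land on a constant--coefficient term with $|\up|=|\us|$, violating the moreover.
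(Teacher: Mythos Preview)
Your proof is correct and follows essentially the same route as the paper's: induction on the number of root-vector factors in $g$, with the base case split into $g=x^-_{k,j}\otimes t^s$ (where a single commutator produces a constant-coefficient term with one block shifted right) and $g=x^-_{k,\bar j}\otimes t^s$ (where a first commutator produces an $x^-_{m,r-1}$ coefficient, and a second commutator with another $x^+_{i,r}$ turns it into a constant-coefficient term with an extra $x^+_{r,r}$ factor), followed by the Leibniz expansion $[\bxur^+(\ul,\us),yg_0]=[\bxur^+(\ul,\us),y]g_0+y[\bxur^+(\ul,\us),g_0]$ and a further swap $\bxur^+(\um,\up)g_0=g_0\bxur^+(\um,\up)+[\bxur^+(\um,\up),g_0]$ to which the induction hypothesis applies. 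Your tracking of the ``moreover'' clause via transitivity of $\eqd$ and your remark on why the positivity hypothesis $(\ul,\us)\in\bF^r_+$ is essential match the paper's reasoning exactly.
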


\begin{proof}
It suffices to prove the result when $g$ is a product of terms of
the form $x_{j,k}^-\otimes t^s$, $x_{j,\kbar}^-\otimes t^s$, where $1\le j\le k< r$ and
$s\ge 0$. We prove this by induction on the number $m$ of terms in
the product.

 If $g = x^-_{j,k} \otimes t^s$, $1\le j\le k<r$, then for $i\in I$,
$[x^+_{i,r} \otimes t^{s'}, g]$  is a scalar multiple of $\delta_{i,j}\, x^+_{k+1,r}
\otimes t^{s+s'}$. Hence $[\bxur^+
(\ul, \us), \,g]$ is a linear combination of terms of the form $
\bxur^+ (\um,\up)$, where $(\um,\up) \in \bF^r_+$ and $$|\up|
= |\us| - \bos_j(m)\boe_j + (\bos_j(m)+s)\boe_{k+1},\quad 1\le
m\le \ell_j.$$ Since $\bos_j(m)>0,\,\, \forall\,\, 1\le m\le \ell_j$, we have
$|\up| \eqd |\us|$.

If $g = x^-_{j,\kbar} \otimes t^s$, $1\le j\le k<r$, then for $i\in I$,
\begin{gather*}
[x^+_{i,r} \otimes t^{s'}, g]= \begin{cases} x^-_{k,r-1}\otimes t^{s+s'}, & i=j,\\
x^-_{j,r-1}\otimes t^{s+s'}, & i=k,\\ 0, & \textup{otherwise.}\end{cases}
\end{gather*}
Since $$[x_{i,r}^+, x_{k,r-1}^-]=-2\delta_{i,k}\, x_{r,r}^+, \,\,\forall\,\,i\leq j\qquad\textup{and}\qquad [x_{i,r}^+,\, x_{j,r-1}^-]=-2\delta_{i,j}\, x_{r,r}^+, \,\,\forall\,\,i\leq k,$$
$[\bxur^+(\ul, \us), \,g]$ is a linear combination of terms of the form $ \bxur^+ (\um,\up), \,(x^-_{k, r-1}\otimes t^{s+s_j(m)})\,\bxur^+ (\un,\uq)$, and
$(x^-_{j, r-1}\otimes t^{s+s_k(n)})\,\bxur^+ (\uk,\uu)$, where $(\um,\up), (\un,\uq), (\uk,\uu) \in \bF^r_+$ with
$$|\up|= |\us| - \bos_j(m')\boe_j - \bos_k(n')\boe_{k}+(\bos_j(m')+\bos_k(n')+s)\boe_{r},$$ 
$1\le
m,m'\le \ell_j, 1\le n,n'\le \ell_k.$ 
Since $\bos_j(m')>0,\,\, \forall\,\, 1\le m'\le \ell_j$, we have
$|\up| \eqd |\us|$, hence induction begins.

For the inductive step, write   $g = yg_0$,
where $y=x_{j,k}^-\otimes t^s$ or $x_{j,\kbar}^-\otimes t^s$ and $g_0$ is a product of $(m-1)$--elements of the form
$x_{j,k}^-\otimes t^{s}, x_{j,\kbar}^-\otimes t^s$, where $1\le j\le k< r$ and $s\geq 0$ . We have $$[\bxur^+ (\ul, \us),\, y g_0] =
[\bxur^+ (\ul, \us),\, y]\,g_0 + y\,[\bxur^+ (\ul, \us),\, g_0].$$ The second term has the
required form by the induction hypothesis. 
From the $m=1$ case, the first term is a linear combination of elements
of the form $g' \,\bxur^+ (\um,\up)\,g_0$, where  $g' \in \bu(\lien^-_{r-1/2}[t])$ and $(\um,\up) \in \bF^r_+$ with  $|\up| \eqd |\us|$ when $g'$ is a constant.
Since 
$$g'\, \bxur^+ (\um,\up)\,g_0=g'\,[\bxur^+ (\um,\up),\, g_0]+g' g_0 \,\bxur^+ (\um,\up),$$
the result follows by using the induction hypothesis again.
\end{proof}
The following result is immediate from \cite[Proposition 3.4.4]{CL}.
\begin{proposition}\label{prop_sl2} Let $1\le i\le r$ and $n\ge 0$.
For all $(\ell,\bos)\in\bF$, the element  $\bx^-_{i,r}
(\ell,\bos)$ is in the span of the union of
$$\left\{\bx^-_{i,r}(\ell,\bop):(\ell,\bop)\in \bF(n), \
|\bop|=|\bos|\right\},$$ and $$
 \left\{\bpr(\bx^+_{i,r}(m-\ell,\bop)\,(x^-_{i,r} \otimes 1)^{m}): m>n,\
(m-\ell,\bop)\in\bF_+,\   |\bop|=|\bos| \right\}.$$ In
particular, the element $\bx_{i,r}^-(\ell,\bos)$ is in the span
of
 $$
 \left\{\bpr(\bx^+_{i,r}(m-\ell,\bop)\,(x^-_{i,r} \otimes 1)^{m}): m\ge 0,\
(m-\ell,\bop)\in\bF_+, \   |\bop|=|\bos| \right\}.$$
\end{proposition}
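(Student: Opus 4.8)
The plan is to reduce the statement to the $\mathfrak{sl}_2[t]$ computation of \cite[Proposition 3.4.4]{CL} by restricting everything to the $\mathfrak{sl}_2$-triple attached to the pair $(i,r)$. Fix $1\le i\le r$ and let $\mathfrak{s}$ be the span of $x^+_{i,r}$, $x^-_{i,r}$, $\alpha_{i,r}^\vee$ in $\lieg$. Since $[x^+_{i,r},x^-_{i,r}]=\alpha_{i,r}^\vee$ and $\langle\alpha_{i,r},\alpha_{i,r}^\vee\rangle=2$, the subspace $\mathfrak{s}$ is a subalgebra isomorphic to $\mathfrak{sl}_2$, and hence $\mathfrak{s}[t]=\mathfrak{s}\otimes\complex[t]$ is a subalgebra of $\lieg[t]$ isomorphic to $\mathfrak{sl}_2[t]$. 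For every $(\ell,\bos)\in\bF$ the elements $\bx^\pm_{i,r}(\ell,\bos)$ lie in $\bu(\mathfrak{s}[t])$; more precisely, $\bx^-_{i,r}(\ell,\bos)$ lies in the commutative subalgebra $\bu(\complex x^-_{i,r}[t])$, and a product $\bx^+_{i,r}(m-\ell,\bop)\,(x^-_{i,r}\otimes 1)^m$ lies in $\bu(\mathfrak{s}[t])$.

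First I would verify that the projection $\bpr$ recalled above restricts on $\bu(\mathfrak{s}[t])$ to the corresponding projection for $\mathfrak{sl}_2[t]$, namely the one onto $\bu(\complex x^-_{i,r}[t])$ with kernel $\bu(\mathfrak{s}[t])\bigl(\complex\alpha_{i,r}^\vee[t]\oplus\complex x^+_{i,r}[t]\bigr)$. Applying the PBW theorem to the ordered basis $\{x^-_{i,r}\otimes t^s\}\cup\{\alpha_{i,r}^\vee\otimes t^s\}\cup\{x^+_{i,r}\otimes t^s\}$ of $\mathfrak{s}[t]$, any $u\in\bu(\mathfrak{s}[t])$ decomposes uniquely as $u=u_-+u'$, where $u_-\in\bu(\complex x^-_{i,r}[t])\subset\bu(\lien^-[t])$ and $u'$ is a sum of monomials each ending in a factor from $\complex\alpha_{i,r}^\vee[t]\oplus\complex x^+_{i,r}[t]$. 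Since $x^+_{i,r}\otimes t^s\in\lien^+[t]$ and $\alpha_{i,r}^\vee\otimes t^s\in\lieh[t]$, both of which lie in $\lieb^+[t]$, we get $u'\in\bu(\lieg[t])(\lieb^+[t])$; uniqueness of the decomposition $\bu(\lieg[t])=\bu(\lien^-[t])\oplus\bu(\lieg[t])(\lieb^+[t])$ then forces $\bpr(u)=u_-$, which is exactly the value of the $\mathfrak{sl}_2[t]$-projection on $u$.

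With this identification, the first assertion is \cite[Proposition 3.4.4]{CL} applied verbatim to $\mathfrak{sl}_2[t]\cong\mathfrak{s}[t]$, with the standard generators $x^\pm$ there replaced by $x^\pm_{i,r}$. The ``in particular'' clause is the case $n=0$: then $\bF(0)=\{(0,\emptyset)\}$, so the first spanning set is nonempty only when $(\ell,\bos)=(0,\emptyset)$, where its single element $1$ equals the $m=0$ member $\bpr\bigl((x^-_{i,r}\otimes 1)^0\bigr)$ of the second set; hence for every $(\ell,\bos)$ the element $\bx^-_{i,r}(\ell,\bos)$ already lies in the span of $\{\bpr(\bx^+_{i,r}(m-\ell,\bop)\,(x^-_{i,r}\otimes 1)^m):m\ge 0,\ (m-\ell,\bop)\in\bF_+,\ |\bop|=|\bos|\}$. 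I do not anticipate a real obstacle here; the only point needing care is the compatibility just described, and in particular the observation that $\bpr$ kills trailing Cartan factors (not merely raising ones), which is why it was essential that $\lieh\subset\lieb^+$ — otherwise the restriction of $\bpr$ to $\bu(\mathfrak{s}[t])$ would not be the $\mathfrak{sl}_2[t]$-projection.
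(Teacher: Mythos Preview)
Your proposal is correct and follows the same approach as the paper, which simply says the result is immediate from \cite[Proposition~3.4.4]{CL}. You have supplied the justification the paper leaves implicit---namely, that $\bpr$ restricted to the universal enveloping algebra of the $\mathfrak{sl}_2[t]$-copy spanned by $x^\pm_{i,r}\otimes t^s$ and $\alpha_{i,r}^\vee\otimes t^s$ agrees with the intrinsic $\mathfrak{sl}_2[t]$-projection, and that the ``in particular'' clause is the case $n=0$.
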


The proof of the following proposition is analogous to that of \cite[Proposition 3.6.3]{CL} and it uses Propositions~\ref{prop_fd},~\ref{proppr}~\eqref{prl}, and \ref{prop_sl2},  and Lemmas~\ref{lem_dif}--\ref{lem_main}.

\begin{proposition}\label{prop_bound1} Let
$\lambda=\sum_{i=1}^rm_i\omega_i\in P^+$.  Fix $1\le k\le r$ and
$m\in\bn$ with $m>m_k$. Let $(\ell_i, \bop_i) \in \bF$, $1\le i
\ne k \le r$, and $(m-\ell_k, \bop_k) \in \bF_+$.  Then, we have
\begin{align*}
&\prod_{i=1}^{k-1} \bx^-_{i,r}(\ell_i, \bop_i)\,
\bpr\left(\bx_{k,r}^+(m-\ell_k,\bop_k)\, (x_{k,r}^- \otimes
1)^{m}\right)  \prod_{i=k+1}^{r} \bx^-_{i,r}(\ell_i, \bop_i)\,
w_\lambda \\&\in 
\sum_{\{(\un,\,\uq)\in\bF^r:(\un,\,|\uq|)>(\ul,\,|\up|)\}} \bu(\lien^-_{r-1/2}[t])\,\bxur^-(\un,\uq)\,w_\lambda.
\end{align*}
\end{proposition}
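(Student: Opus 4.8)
The plan is to follow the proof of \cite[Proposition~3.6.3]{CL} closely, making the changes forced by the presence of the two families of root vectors $x^-_{i,j}$ and $x^-_{i,\overline j}$ of type $C$. Write $X=\prod_{i=1}^{k-1}\bx^-_{i,r}(\ell_i,\bop_i)$, $Z=\prod_{i=k+1}^{r}\bx^-_{i,r}(\ell_i,\bop_i)$, and $G=\bx^+_{k,r}(m-\ell_k,\bop_k)\,(x^-_{k,r}\otimes 1)^m$. The first step is to discard the projection. Since $(m-\ell_k,\bop_k)\in\bF_+$, the factor $\bx^+_{k,r}(m-\ell_k,\bop_k)$ lies in $\bu(\lien^+t[t])$, and since $\lien^-[t]\oplus\lieb^+t[t]$ is a Lie subalgebra of $\lieg[t]$ containing both $\lien^-[t]$ and $\lien^+t[t]$, we have $XGZ\in\bu(\lien^-[t]\oplus\lieb^+t[t])$; hence by Proposition~\ref{proppr}\eqref{proppr3},\eqref{prl} the left-hand side equals $\bpr(XGZ)\,w_\lambda=XGZ\,w_\lambda$ in $W(\lambda)$, so it is enough to analyse $X\,\bx^+_{k,r}(m-\ell_k,\bop_k)\,(x^-_{k,r}\otimes 1)^m\,Z\,w_\lambda$.

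If $k=r$ this is immediate: then $Z=1$, $x^-_{r,r}=x^-_{r,\overline r}$ is the Chevalley generator attached to the simple root $\alpha_r$, and the defining relation $(x^-_{r,r})^{m_r+1}w_\lambda=0$ of $W(\lambda)$ together with $m>m_r$ gives $(x^-_{r,r}\otimes 1)^m w_\lambda=0$, so the whole expression vanishes.

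For $k<r$ I would argue as follows. Using $[x^-_{k,r},x^-_{i,r}]=0$ for all $i$ one moves $(x^-_{k,r}\otimes 1)^m$ past $Z$, and then pushes $\bx^+_{k,r}(m-\ell_k,\bop_k)$ rightward through $Z$ and through $(x^-_{k,r}\otimes1)^m$ towards $w_\lambda$ (where it dies, being of positive $t$-degree in $\lien^+[t]$), using the brackets recorded in Lemma~\ref{lem1act}\eqref{lem_act2} (in particular $[x^+_{k,r},x^-_{i,r}]=x^+_{k,i-1}$ for $k<i\le r$, $[x^+_{k,i-1},x^-_{j,r}]=0$ for $j>k$, $[x^+_{k,i-1},x^-_{k,r}]\in\complex\,x^-_{i,r}$, and $[x^+_{k,r},x^-_{k,r}]=\alpha^\vee_{k,r}$) together with Lemma~\ref{lem_main}. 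Every correction term produced is either annihilated by $w_\lambda$, or it has, for some $i>k$, an $x^-_{k,r}\otimes1$ factor traded for an $x^-_{i,r}\otimes t^{s}$ factor; the latter strictly decreases the $k$-th coordinate of $\ul$ and so strictly increases $(\ul,|\us|)$. Hence, modulo $\sum_{(\un,|\uq|)>(\ul,|\up|)}\bu(\lien^-_{r-1/2}[t])\,\bxur^-(\un,\uq)\,w_\lambda$, the expression is congruent to a scalar multiple of an $x^-_{k,r}$-monomial $\bxur^-(\ell_1,\dots,\ell_r;\bop_1,\dots,\bop',\dots,\bop_r)\,w_\lambda$ with $|\bop'|=|\bop_k|$, still carried at index $(\ul,|\up|)$. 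To see that this congruence class already lies in the required sum I would exploit the hypothesis $m>m_k$ through the finite-dimensional input: $x^-_{k,r}$ is a nonzero scalar multiple of $[x^-_{k,r-1},x^-_{r,r}]$ with $x^-_{k,r-1}\in\lien^-_{r-1/2}$, so iterating this identity rewrites the $x^-_{k,r}$-monomial, modulo terms of strictly larger index (by Lemma~\ref{lem_dif}), into $\bu(\lien^-_{r-1/2}[t])$ applied to $x^-_{r,r}$- and $x^-_{i,r}$-monomials on $w_\lambda$; Proposition~\ref{prop_fd} applied to $\lieg_{k,r}\cong\mathfrak{sp}_{2(r-k+1)}$ acting on $\bu(\lieg_{k,r})w_\lambda\cong V(\lambda|_{\lieh_{k,r}})$, together with the weight count of Lemma~\ref{lem_fd} and the $\mathfrak{sl}_2[t]$-facts of Proposition~\ref{prop_sl2}, then forces any contribution whose $x^-_{k,r}$-length still exceeds $m_k$ to be absorbed into $\bu(\lien^-_{k,r-1/2}[t])\subseteq\bu(\lien^-_{r-1/2}[t])$ times monomials of strictly larger index. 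Moving the $\bu(\lien^-_{r-1/2}[t])$-factors back to the left (via Lemma~\ref{lem1act}\eqref{lem_act1},\eqref{lem_act3} and Lemma~\ref{lem_main}) and feeding the resulting shorter monomials through $X$ and $Z$ lands everything in the required sum, exactly as in \cite[Proposition~3.6.3]{CL}.

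The step I expect to be the real obstacle is this last one for $k<r$. One must simultaneously track the $\mathbb Z$-grading and the order filtration --- Proposition~\ref{prop_fd} is a degree-$0$ statement about $V(\lambda)$, while the element under study has positive $t$-degree --- ensure that the iterated rewriting of $x^-_{k,r}$ into brackets of lower root vectors terminates without re-creating the original index, and verify, using the precise structure constants of Lemmas~\ref{lem_dif}--\ref{lem_main} for \emph{both} root families $x^-_{i,j}$ and $x^-_{i,\overline j}$, that none of the many intermediate commutators can lower $(\ul,|\us|)$. Once this bookkeeping is carried out the remaining manipulations are routine and follow \cite[\S3.6]{CL}.
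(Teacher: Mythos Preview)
There is a genuine error in your first step. The identity $X\,\bpr(G)\,Z\,w_\lambda=XGZ\,w_\lambda$ does not follow from Proposition~\ref{proppr}: part~\eqref{proppr3} says $\bpr(g_1g_2)=\bpr(g_1\,\bpr(g_2))$, which lets you strip a projection from the \emph{right} factor, but there is no companion identity $\bpr(\bpr(g_1)\,g_2)=\bpr(g_1g_2)$ allowing you to strip one from the left. A direct computation with $r=2$, $k=1$, $G=(x^+_{1,2}\otimes t)(x^-_{1,2}\otimes1)^2$, $Z=x^-_{2,2}\otimes1$ gives $\bpr(G)\,Z=-2(x^-_{1,2}\otimes t)(x^-_{2,2}\otimes1)$ while $\bpr(GZ)=\bpr(G)\,Z-4(x^-_{1,2}\otimes1)(x^-_{2,2}\otimes t)$, so your claimed equality already fails here.

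What is true---and this is precisely the role of Lemma~\ref{lem_dif}, which you invoke only later---is that $X\,\bpr(G)\,Z$ and $\bpr\bigl(\bx^+_{k,r}(m-\ell_k,\bop_k)\,\bxur^-(\ul',\us')\bigr)$ agree \emph{modulo the target subspace}. Indeed $X\,\bpr(G)\,Z$ is literally the product $\prod_{i=1}^r\bpr\bigl(\bx^+_{i,r}(\ell^+_i,\bos^+_i)\,\bx^-_{i,r}(\ell'_i,\bos'_i)\bigr)$ with $\ul^+=(m-\ell_k)\boe_k$ and $\ul'=\ul+(m-\ell_k)\boe_k$, so Lemma~\ref{lem_dif} (with $i_0=k$) gives exactly this congruence; only then can Proposition~\ref{proppr}\eqref{prl} be applied to drop the single outer projection, leaving the raising block to the \emph{left} of the entire $\bxur^-$-monomial rather than sandwiched between $X$ and $Z$. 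From here the argument of \cite[\S3.6]{CL} proceeds differently from your sketch: one rewrites the remaining $\bx^-_{i,r}$-factors via the second part of Proposition~\ref{prop_sl2}, uses Lemma~\ref{lem_dif} once more to reduce to a degree-zero vector $(x^-_{1,r})^{n_1}\cdots(x^-_{r,r})^{n_r}w_\lambda\in V(\lambda)$ with $n_k\geq m>m_k$, invokes Proposition~\ref{prop_fd} to bound the exponents at the cost of $\bu(\lien^-_{r-1/2})$-coefficients, and then uses Lemma~\ref{lem_main} to move the outer $\bxur^+$ past those coefficients. The iterated bracket-rewriting of $x^-_{k,r}$ that you propose is not needed, and---as you yourself acknowledge---would be hard to control.
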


\subsection{} {\it Proof of Proposition~\ref{mainprop}~\eqref{mainpropp1}}.  Let
$\lambda=\sum_{i=1}^r m_i\omega_i\in P^+$. Using
Proposition~\ref{prop_sl2}  simultaneously for $1\le i\le r $ and
for $n = m_1, \dots, m_r$, we see that the element $\bxur^-(\ul,\us)$, for $(\ul,\us)\in\bF^r$, is in the span of the elements from
\begin{equation}\label{these}
\bxur^-(\ul,\up), \qquad  |\up| = |\us|,
\ \
(\ul,\up) \in \bF^r(\lambda)
\end{equation}
 together with the elements from
\begin{equation}\label{other}
\prod_{i=1}^{k-1}\bx^-_{i,r}(\ell_i,\bop_i)\,
\bpr\left(\bx^+_{k,r}(m-\ell_k,\bop_k)\,(x_{k,r}^-\otimes 1)^m\right)
\prod_{i=k+1}^{r}\bx^-_{i,r}(\ell_i,\bop_i), \qquad m> m_k, \ \,
\ \ |\up| = |\us|,
\end{equation}
where  $(\ell_i, \bop_i) \in \bF$,
$1\le i \ne k \le r$, and $(m-\ell_k, \bop_k) \in \bF_+$.
Now the proof follows by using Proposition~\ref{prop_bound1}.
%\nocite{*}   %This command (when this line is uncommented)
%		will put all the entries in the database of references
%		into the body of the paper,  whether or not they are
%		cited in the text.    Without this only those cited 
%		will appear.

%%%%%%%%%%%%%%%%%%%%%%%%%%%%%%%%%%%%%%%%%%%%%%%%%%%%%%%%%%%%%%%%%%%%%%%%%%%%
%%%%%%%%%%%%%%% reformatting the citation number and number of the bibliography
%%%%%%%%	item can be achieved by the following:
%	\newcommand\citenumfont[1]{\textbf{#1}}
\bibliographystyle{bibsty-final-no-issn-isbn}%{plain}
\addcontentsline{toc}{section}{References}
%\ifthenelse{\equal{\finalized}{no}}{
%\bibliography{abbrev,references}}
%\input{tex_me.bbl}

\end{document}